\newtheorem{theorem}{Theorem}[section]
\newtheorem{lem}[theorem]{Lemma}
\newtheorem{proposition}[theorem]{Proposition}
\newtheorem{prop}[theorem]{Proposition}
\newtheorem{defn}[theorem]{Definition}
\newtheorem{definition}[theorem]{Definition}
\numberwithin{equation}{section}
\newcommand{\N}{\mathbb{N}}
\newcommand{\R}{\mathbb{R}}
\newcommand{\Rn}{\R^n}
\newcommand{\Rnn}{\mathbb{R}^{n \times n}}
\newcommand{\mS}{\mathbb{S}}
\newcommand{\A}{\mathcal{A}}
\newcommand{\E}{\mathcal{E}}
\newcommand{\B}{\mathcal{B}}
\newcommand{\Ha}{\mathcal{H}}
\newcommand{\vn}{\vec{n}}
\newcommand{\vm}{\vec{m}}
\newcommand{\vl}{\vec{\ell}}
\newcommand{\hvpe}{\hat{\vp}_\eta}
\newcommand{\Ap}{\A_p}
\newcommand{\ssubset}{\subset \! \subset}
\newcommand{\weakcs}{\overset{*}{\rightharpoonup}}
\newcommand{\ve}{\varepsilon}
\newcommand{\vp}{\varphi}
\newcommand{\Om}{\Omega}
\DeclareMathOperator{\id}{id}
\DeclareMathOperator{\cof}{cof}
\DeclareMathOperator{\adj}{adj}
\DeclareMathOperator{\Div}{div}
\DeclareMathOperator{\imT}{im_{T}}
\DeclareMathOperator{\imG}{im_{G}}
\DeclareMathOperator{\mec}{mec}
\DeclareMathOperator{\nem}{nem}
\DeclareMathOperator{\curl}{curl}
\DeclareMathOperator{\tr}{tr}
\renewcommand{\leq}{\leqslant}
\renewcommand{\le}{\leqslant}
\renewcommand{\geq}{\geqslant}
\renewcommand{\ge}{\geqslant}
\newcommand{\dd}{\mathrm{d}}
\renewcommand{\epsilon}{\varepsilon }
\newcommand{\weakc}{\rightharpoonup}
\newcommand{\weakly}{\rightharpoonup}
\begin{document}
%\pagestyle{myheadings} 
%\markboth{\texttt{\jobname{.tex}}}{\texttt{\jobname{.tex}}}

\title{Relaxation of nonlinear elastic energies involving deformed configuration and applications to nematic elastomers}

\author{Carlos Mora-Corral and Marcos Oliva
\\
\footnotesize Department of Mathematics, Faculty of Sciences, Universidad Aut\'onoma de Madrid, 28049 Madrid, Spain
}

\date{\today}
\maketitle

\begin{abstract}
We start from a variational model for nematic elastomers that involves two energies: mechanical and nematic.
The first one consists of a nonlinear elastic energy which is influenced by the orientation of the molecules of the nematic elastomer.
The nematic energy is an Oseen--Frank energy in the deformed configuration.
The constraint of the positivity of the determinant of the deformation gradient is imposed.
The functionals are not assumed to have the usual polyconvexity or quasiconvexity assumptions to be lower semicontinuous.
We instead compute its relaxation, that is, the lower semicontinuous envelope, which turns out to be the quasiconvexification of the mechanical term plus the tangential quasiconvexification of the nematic term.
The main assumptions are that the quasiconvexification of the mechanical term is polyconvex and that the deformation is in the Sobolev space $W^{1,p}$ (with $p>n-1$ and $n$ the dimension of the space) and does not present cavitation. 
\end{abstract}

\emph{Keywords:} nonlinear elasticity; nematic elastomers; relaxation; deformed configuration.

\section{Introduction}

Liquid crystal elastomers are hybrid materials that combine the orientational order of liquid crystals with the elastic properties of rubber-like solids.
They are constituted by a network of long, crosslinked polymer chains.
It is this cross-linking what differentiates a liquid crystal elastomer from an ordinary liquid crystal polymer.
In the inner structure of these elastomers, some elongated rigid monomer units (called \emph{mesogens}) are incorporated to the polymer chain.
As any liquid crystal, it can have several phases, according to its internal ordering; they are usually classified in nematic, smectic and cholesteric.
In the nematic phase, which is the study of this work, the molecules self-align to have a long-range directional order.
In fact, most nematic liquid crystals are uniaxial: they have one axis that is longer and preferred.
When we assume that the degree of order is fixed (along space and time), the order can be described by a unit vector field $\vec n$, indicating the preferred axis: this leads to the Oseen--Frank theory.
In fact, if their degree of order is not fixed, then the more elaborated Landau--de Gennes' $Q$-tensor theory is used instead.
Classic references for liquid crystals are \cite{DePr93,Virga94}, and one specifically for liquid crystal elastomers is \cite{WaTe07}.

In the small deformation regime, the director field $\vec n$ can be defined in the reference configuration, but when large deformations are present, it has to be evaluated at points in the deformed configuration (see \cite{DeTe09,BaDe15}).
Thus, while in hyperelasticity \cite{Ball77} one usually assumes that the mechanical energy of a deformation $u : \Om \to \Rn$ is of the form
\begin{equation}\label{eq:W0intro}
 \int_{\Om} W_0 (D u (x)) \, \dd x ,
\end{equation}
(where $\Om \subset \Rn$ represents the body in its reference configuration), the coupling of rubber elasticity with the orientational order of the molecules produces a strong anisotropic behaviour, and the energy is given by
\begin{equation}\label{eq:Wintro}
 I_{\mec} (u, \vn) := \int_{\Om} W (D u (x), \vec n (u (x))) \, \dd x .
\end{equation}
In this way, the energy density not only depends on the deformation gradient $Du$ but it is also influenced by the director field $\vec n$ evaluated in the deformed configuration.
Normally \cite{DeTe09,BaDe15}, given an elastic-energy function $W_0$ and a fixed degree of amplitude $\alpha > 0$, one takes
\begin{equation}\label{eq:WW0}
 W (F, \vec n) = W_0 \left( \left( \alpha^{-1} \vec n \otimes \vec n + \sqrt{\alpha} ( I - \vec n \otimes \vec n) \right) F \right) .
\end{equation}
The material parameter $\alpha$ describes the amount of local distortion, and the tensor $\alpha^{-1} \vec n \otimes \vec n + \sqrt{\alpha} (I - \vec n \otimes \vec n)$ represents a volume-preserving uniaxial stretch of amplitude $\alpha^{-1}$ along the direction $\vec n$; here $I$ denotes the identity matrix.
In this work, however, we allow for a general dependence of $\vec n$, so that $W$ is not necessarily of the form \eqref{eq:WW0}.
In fact, for the sake of generality, in this article we work in dimension $n$, despite the physically relevant case is, of course, $n=3$.

The vector field $\vn$ takes values in the unit sphere $\mS^{n-1}$, although, because of the head-to-tail symmetry of the nematics (i.e., the fact that $\vec n$ is indistinguishable from $- \vn$; see, e.g., \cite{DePr93}), it should take values in the real projective space of dimension $n-1$; see \cite{BaZa11} for a comparison between the two models.
Still, we adopt the more usual approach of $\mS^{n-1}$ and, in order to take into account the head-to-tail symmetry, the energy density $W : \Rnn \times \mS^{n-1} \to [0, \infty]$ of \eqref{eq:Wintro} has to satisfy $W (F, \vec n) = W (F, - \vec n)$ for all $F \in \Rnn$ and $\vn \in \mS^{n-1}$.
It must also meet the principle of objectivity, but in this work we will not use that assumption.

The model that we adopt for the nematic elastomers is, with some small generalizations, that of Barchiesi and DeSi\-mo\-ne \cite{BaDe15} (see also \cite{DeTe09,AgDe11} for earlier studies and \cite{BaHeMo17} for a later slight generalization, which in fact is the starting point of this work).
Accordingly, the energy $I$ associated to the deformation $u$ and the director $\vec n$ is the sum of two contributions: $I = I_{\mec} + I_{\nem}$, where $I_{\mec}$ is as in \eqref{eq:Wintro}, and
\begin{equation}\label{eq:Inem}
 I_{\nem} (u, \vec n) := \int_{u(\Om)} V (\vn (y), D \vec n (y)) \, \dd y .
\end{equation}
The term $I_{\mec}$ is, as explained above, the mechanical energy of the deformation, where the effect of the orientation of the molecules is taken into account.
The term $I_{\nem}$, the nematic energy, is an Oseen--Frank energy in the deformed configuration; we have denoted by $D \vec n$ the gradient of $\vn$.
It is important that the function $V$ that appears in \eqref{eq:Inem} has an explicit dependence on $\vec n$, since the most typical Oseen--Frank energy is (in dimension $3$) of the form
\begin{equation}\label{eq:OF}
 K_1 \left( \Div \vn \right)^2 + K_2 \left( \vn \cdot \curl \vn \right)^2 + K_3 \left| \vn \times \curl \vn \right|^2 + \left( K_2 + K_4 \right) \left( \tr (D \vn)^2 - ( \Div \vn )^2 \right) ,
\end{equation}
for some constants $K_1, \ldots, K_4$, although sometimes the easier particular case
\begin{equation}\label{eq:oneconstant}
 K \left| D \vn \right|^2
\end{equation}
is used, which is the so-called \emph{one-constant approximation} and corresponds to the choice $K_1 = K_2 = K_3$, $K_4 = 0$.
In general, the role of the energy density $V$ is to penalize variations of the nematic director, and, more precisely, the main types of distortion in a nematic: splay, twist and bend.
We recall that, although formula \eqref{eq:OF} is usually applied when $\vn$ is defined in the reference configuration $\Om$, it is also a valid model when $\vn$ is defined in the deformed configuration $u (\Om)$.
In this case, the head-to-tail symmetry requests $V (\vn, G) = V (-\vn, -G)$ for all arguments $(\vn,G)$ where $V$ is defined.

Existence of minimizers for the functional $I$ was proved first in \cite{BaDe15} and then generalized in \cite{BaHeMo17}, for $W$ of the form \eqref{eq:WW0} and $V$ being \eqref{eq:oneconstant}.
In any case, it was clear from the proof that the key hypotheses were the polyconvexity of $W$ and the quasiconvexity of $V$.
These assumptions imply the lower semicontinuity of both functionals $I_{\mec}$ and $I_{\nem}$, and, together with suitable coercivity assumptions, the direct method of the calculus of variations guarantees the existence of minimizers.
The main difficulty in that analysis were the composition $\vn \circ u$ in the term $I_{\mec}$ (since composition is not continuous in general with respect to the weak topology) and the fact that the domain of integration in $I_{\nem}$ depends on $u$.
Those obstacles were overcome by the use of a local invertibility property for the class of deformations $u$ in the admissible set.

In this work we remove the conditions leading to the lower semicontinuity: the function $W$ is not polyconvex (not even quasiconvex) and $V$ is not quasiconvex (in fact, not \emph{tangentially quasiconvex}, which is the natural convexity assumption in this context; see below).
Then, minimizers may not exist, and the usual approach is the computation of a relaxed (or \emph{effective}) energy.
Relaxation typically indicates the formation of microstructure; see, e.g., \cite{BaJa87,Muller99,CoDo14} in the context of elasticity, and \cite{DeDo02,Silhavy07,CeDe11} for nematic elastomers.

Since the result of Dacorogna \cite{Dacorogna82}, we know that under $p$-growth conditions (where $p$ is the exponent of the Sobolev space $W^{1,p}$ where the problem is set), the relaxation of a functional of the form \eqref{eq:W0intro} is
\begin{equation}\label{eq:qcW0}
 \int_{\Om} W_0^{qc} (D u (x)) \, \dd x ,
\end{equation}
where $W_0^{qc}$ is the \emph{quasiconvexification} of $W_0$.
However, a $p$-growth condition is incompatible with the standard assumption in nonlinear elasticity in which it is required that $W_0$ is infinity in matrices $F$ with $\det F \leq 0$ and
\begin{equation}\label{eq:W0infty}
 W_0 (F) \to \infty \quad \text{as} \quad \det F \to 0 .
\end{equation}
Conti and Dolzmann \cite{CoDo15} have recently proved the first relaxation result for energies $W_0$ satisfying \eqref{eq:W0infty}.
The conclusion is that \eqref{eq:qcW0} is indeed the relaxation of \eqref{eq:W0intro}, whereas the main assumptions are that $W_0^{qc}$ is polyconvex, and that the exponent $p$ of the Sobolev space where the problem is set satisfies $p \geq n$.

When $\Om' \subset \Rn$ is a fixed domain, the relaxation of an energy of the form
\[
 \int_{\Om'} V (D \vec n (y)) \, \dd y
\]
when $\vn$ takes values in the unit sphere (or, in general, in a manifold) was proved in Dacorogna \emph{et al.}\ \cite{DaFoMaTr99} to be 
\[
 \int_{\Om'} V^{tqc} (D \vec n (y)) \, \dd y ,
\]
where $V^{tqc}$ is the \emph{tangential quasiconvexification} of $V$ (see Section \ref{se:quasiconvex} for the definition).
In our case, however, the domain of integration $u (\Om)$ in $I_{\nem}$ varies along the minimizing sequence or the test functions, so the result of \cite{DaFoMaTr99} is not directly applicable.
Our function $V$ also has an extra dependence on $\vn$, but this is not a problem because it is a lower-order perturbation (see \cite{AlLe01}).

Finally, it is immediate to see from the definition that the relaxation of a sum is less than or equal to the sum of the relaxations, so knowing the relaxation of each term $I_{\mec}$ and $I_{\nem}$ is insufficient to compute the relaxation of $I$, unless we have an extra condition implying that the two processes of relaxation do not interfere.

In this paper we prove that the relaxation of $I$ is
\begin{align*}
 & I^* := I^*_{\mec} + I^*_{\nem} , \text{ with} \\
 & I^*_{\mec} (u, \vn) := \int_{\Omega} W^{qc} (Du(x),\vn(u(x))) \, \dd x , \qquad I^*_{\nem} (u, \vn) :=  \int_{u (\Omega)} V^{tqc} (\vn(y),D\vn(y)) \, \dd y ,
\end{align*}
where $W^{qc}$ is the quasiconvexification of $W$ in the first variable and, as in \cite{CoDo15}, $W^{qc}$ is assumed to be polyconvex.
The exponent $p$ of the Sobolev space where $u$ lies satisfies $p > n-1$, which constitutes an improvement of the result of \cite{CoDo15}.
In the next paragraphs we comment on the main ideas of the proof.

A relaxation result is usually proved in two steps: a lower bound and an upper bound.
The lower bound inequality consists in proving that the functional $I^*$ is lower semicontinuous, and the proof of this fact is a slight generalization of that of \cite{BaHeMo17}.
Hence, the bulk of the proof of the relaxation result relies, as in \cite{CoDo15}, in the upper bound, which amounts to the construction of a \emph{recovery sequence}: for each $(u, \vn)$ we must find a sequence $\{ (u_j, \vn_j) \}_{j \in \N}$ such that $u_j \to u$ in $L^1 (\Om, \Rn)$, $\vn_j \to \vn$ in $L^1$ (in a precise sense, since the domain of definition of each $\vn_j$ varies) and $I(u_j, \vn_j) \to I^* (u, \vn)$ as $j \to \infty$.

We start with the term $I_{\mec}$.
We recall from \cite{CoDo15} that the reason to choose $u$ to be in the Sobolev space $W^{1,p}$ with $p \geq n$ is because this space makes the determinant of the gradient weakly continuous in $L^1$, i.e., if $u_j \weakc u$ as $j \to \infty$ in $W^{1,p}$ with $\det Du_j > 0$ a.e.\ for all $j \in \N$ then $\det Du_j \weakc \det D u$ in $L^1$.
Functions in $W^{1,p}$ with $p \geq n$ also enjoy nice properties such as the continuity (this is Morrey's \cite{Morrey08} embedding theorem for $p >n$ and was proved in \cite{VoGo76} for $p=n$ under the assumption $\det Du >0$ a.e.).
Nevertheless, there is a large amount of work about the continuity of the determinant in the space $W^{1,p}$ with $p>n-1$, as well as extra regularity properties of such functions, provided some additional conditions hold; see \cite{Ball77,Sverak88,MuQiYa94,MuSp95,HeMo10,HeMo12}.
In fact, the possibility of lowering the exponent from $p \geq n$ to $p>n-1$ by using those results was already suggested in \cite{CoDo15}.
Here we took the tools from Barchiesi \emph{et al.}\ \cite{BaHeMo17}, where it was defined a class $\Ap$ of functions $u \in W^{1,p}$ ($p>n-1$) with $\det D u >0$ a.e.\ such that, in a precise way, no \emph{cavitation} occurs (cavitation is the formation of voids in the material, see \cite{MuSp95}).
This class contains the familiar classes $\A_{p,q}$, studied in \cite{Ball77,Sverak88,MuQiYa94}, formed by the Sobolev maps $u$ in $W^{1,p}$ such that $\cof Du \in L^q$ and $\det Du>0$ a.e., for $p>n-1$ and $q\geq \frac{n}{n-1}$.
It was proved in \cite{BaHeMo17} that many properties that $W^{1,n}$ enjoy also hold in $\Ap$.
The most important ones for this work are the weak continuity of the determinant and the local invertibility, which states that for a.e.\ $x \in \Om$ there is $r>0$ such that $u$ is invertible in $B(x, r)$.
This local invertibility property is the key to analyzing functionals like $I$ that involve both reference and deformed configurations.
Thus, the recovery sequence $\{ u_j \}_{j \in \N}$ for $u$ and, hence, the treatment of the term $I_{\mec}$ is an adaptation of the construction of \cite{CoDo15} but using some tools of \cite{BaHeMo17}.
As a direct corollary of our study we obtain that the relaxation result of \cite{CoDo15} can be extended to the functions in the class $\A_p$ (choosing $W$ not depending on $\vn$ and $V=0$, even though $V=0$ does not satisfy our assumptions).
They key idea is to modify the value of a given $u$ in balls, so that in those balls $u$ is replaced by a certain composition $u \circ v$ in such a way that the orientation-preserving condition remains and that the modified function still belongs to $\Ap$.
Moreover, the image of $u$ coincides with the image of the modified function.
In this way, we construct a sequence $\{ u_j \}_{j\in\N}$ in $\Ap$ such that $u_j (\Om) = u (\Om)$ for all $j \in \N$, $u_j \to u$ in $L^1 (\Om, \Rn)$ and $I_{\mec} (u_j, \vn) \to I^*_{\mec} (u, \vn)$ as $j \to \infty$.
At this point, we ought to mention that the image $u(\Om)$ requires a precise definition, since $u$ is, in principle defined a.e., and $u(\Om)$ must be open so that $\vn$ is in the Sobolev space $W^{1,s} (u(\Om), \mS^{n-1})$.
These technicalities were solved in \cite{BaHeMo17}.

The term $I_{\nem}$ is tackled as in \cite{DaFoMaTr99} with the use of the tangential convexification.
In principle, the only obstruction to apply their result directly is that the domain $u(\Om)$ may vary along the recovery sequence, but, as explained in the previous paragraph, the recovery sequence $\{u_j\}_{j \in \N}$ constructed for $u$ satisfies that $u_j (\Om) = u (\Om)$.
This equality is also the reason why the two processes of relaxation do not interfere and we have that the relaxation of $I$ is the sum of the relaxations, i.e., $I^* = I^*_{\mec} + I^*_{\nem}$.

Although the motivation of this work is the model for nematic elastomers explained above, the techniques presented here should be useful for other models involving reference and deformed configurations, like those in magnetoelasticity (see \cite{RyLu05,KrStZe15,BaHeMo17}) or the Landau--de Gennes model for liquid crystal elastomers (see \cite{CaGaYa15,BaHeMo17}).
In this respect, this work seems to be the first study where the relaxation in the deformed configuration has been performed.

The article is structured as follows.
Section \ref{sect:caprelaxdef} establishes the definitions and notations used throughout the paper.
Section \ref{se:quasiconvex} reviews the concepts of polyconvexity, quasiconvexity and tangential quasiconvexity.
In Section \ref{sect:caprelaxenergy} we define the class $\Ap$ and recall some results from \cite{BaHeMo17} that will be used in the paper.
We also show some new results in the class $\Ap$ in order to prove that the recovery sequence to be constructed in Section \ref{sect:caprelaxrel} is indeed in $\Ap$.
Section \ref{se:existence} proves the lower bound inequality, as well as the existence of minimizers for $I^*$.
In Section \ref{se:productchain} we recall three auxiliary results from \cite{CoDo15} about the product of $L^1$ functions and the chain rule for Sobolev functions.
Section \ref{sect:caprelaxrel} is the core of the paper: we prove the upper bound inequality by the construction of a recovery sequence.
The paper finishes with Section \ref{se:generalizations}, where the relaxation result is established as a consequence of the results of Sections \ref{se:existence} and \ref{sect:caprelaxrel}.

\section{General notation}\label{sect:caprelaxdef}

In this section we establish the general notation and definitions used in the paper.
We postpone the definitions regarding the class $\Ap$ to Section \ref{sect:caprelaxenergy}.

We will work in dimension $n \geq 2$.
In all the paper, $\Om$ is a non-empty bounded open set of $\R^n$, which represents the body in its reference configuration.

The closure of a set $A$ is denoted by $\bar{A}$ and its boundary by $\partial A$.
Given two sets $U,V$ of $\Rn$, we will write $U \ssubset V$ if $U$ is bounded and $\bar{U} \subset V$.
The open ball of radius $r>0$ centred at $x \in \Rn$ is denoted by $B(x, r)$.

Given a square matrix $A \in \Rnn$, its determinant is denoted by $\det A$.
The adjugate matrix $\adj A \in \Rnn$ satisfies $(\det A) I = A \adj A$, where $I$ denotes the identity matrix.
The transpose of $\adj A$ is the cofactor $\cof A$.
If $A$ is invertible, its inverse is denoted by $A^{-1}$.
The inner (dot) product of vectors and of matrices will be denoted by $\cdot$ and their associated norms are denoted by $\left| \cdot \right|$.
Given $a, b \in \Rn$, the tensor product $a \otimes b$ is the $n \times n$ matrix whose component $(i,j)$ is $a_i \, b_j$.
The set $\Rnn_+$ denotes the subset of matrices in $\Rnn$ with positive determinant, while $SL(n) \subset \Rnn$ is the set of matrices with determinant one.
The set $\mS^{n-1}$ denotes the subset of unit vectors in $\Rn$.

The symbol $\lesssim$ is used to indicate that the quantity of the left-hand side is less than or equal to a positive constant (whose precise value is not important) times the right-hand side.
This constant is, of course, independent of the main quantity to estimate, which should be clear from the context.

The Lebesgue measure in $\Rn$ is denoted by $\left| \cdot \right|$, and the $(n-1)$-dimensional Hausdorff measure by $\Ha^{n-1}$.
%The abbreviation \emph{a.e.} stands for \emph{almost everywhere} or \emph{almost every}; unless otherwise stated, it refers to the Lebesgue $\La$ measure.
For $1 \leq p \leq \infty$, the Lebesgue $L^p$ and Sobolev $W^{1,p}$ spaces are defined in the usual way.
So are the functions of class $C^k$, for $k$ a positive integer of infinity, and their versions $C^k_c$ of compact support.
The derivative of a Sobolev or $C^k$ function $u$ is written $D u$.
The conjugate exponent of $p$ is $p'$.
We will indicate the domain and target space, as in, for example, $L^p (\Om,\Rn)$, except if the target space is $\R$, in which case we will simply write $L^p (\Om)$; the corresponding norm is written $\left\| \cdot \right\|_{L^p (\Om,\Rn)}$.
Given $S \subset \Rn$, the space $L^p (\Om ,S)$ denotes the set of $u \in L^p (\Om,\Rn)$ such that $u (x) \in S$ for a.e.\ $x \in \Om$, and analogously for other function spaces.
Weak convergence in $L^p$ or $W^{1,p}$ is indicated by $\weakc$, while $\weakcs$ is the symbol for weak$^*$ convergence in $L^{\infty}$.
Strong or a.e.\ convergence is denoted by $\to$.
Given a measurable set $A$ the symbol $\fint_A$ denotes the integral in $A$ divided by the measure of $A$.
The identity function in $\Rn$ is denoted by $\id$.

\section{Polyconvexity, quasiconvexity and tangential quasiconvexity}\label{se:quasiconvex}

Quasiconvexity is a central concept in the calculus of variations, since, under suitable growth assumptions, it is necessary and sufficient for the lower semicontinuity of functionals of the form \eqref{eq:W0intro} in the weak topology of $W^{1,p}$ (see the pioneering results of \cite{Morrey52,AcFu84} or the monograph \cite{Dacorogna08}).
However, no lower semicontinuity results have been proved so far for quasiconvex integrands $W_0$ satisfying \eqref{eq:W0infty}.
Here is where the concept of \emph{polyconvexity} comes into play (see, e.g., \cite{Ball77,BaCuOl81,Dacorogna08}).
Let $\tau$ be the number of minors of an $n \times n$ matrix; we call $\R^{\tau}_+:=\R^{\tau-1} \times (0, \infty)$ and denote by $M(F) \in \R^{\tau}$ the collection of all the minors of an $F \in \Rnn$ in a given order such that its last component is $\det F$; we denote by $M_0(F) \in \R^{\tau-1}$ the collection of all the minors of an $F \in \Rnn$ except the determinant, in a given order.
For the sake of clarity, in the following definition of polyconvexity, we single out three cases, according to whether the domain of definition is the set of all matrices or only those with positive determinant or only those with determinant one.

\begin{definition}
\begin{enumerate}[a)]
\item
A Borel function $W_0 : SL(n) \to \R \cup \{\infty\}$ is polyconvex if there exists a convex function $\Phi : \R^{\tau-1} \to \R \cup \{\infty\}$ such that $W_0 (F) = \Phi (M_0(F))$ for all $F \in SL(n)$.

\item
A Borel function $W_0 : \Rnn_+ \to \R \cup \{\infty\}$ is polyconvex if there exists a convex function $\Phi : \R^{\tau}_+ \to \R \cup \{\infty\}$ such that $W_0 (F) = \Phi (M(F))$ for all $F \in \Rnn_+$.

\item
A Borel function $W_0 : \Rnn \to \R \cup \{\infty\}$ is polyconvex if there exists a convex function $\Phi : \R^{\tau} \to \R \cup \{\infty\}$ such that $W_0 (F) = \Phi (M(F))$ for all $F \in \Rnn$.
\end{enumerate}
\end{definition}

We remark that if a $W_0 : SL(n) \to \R \cup \{\infty\}$ or $W_0 : \Rnn_+ \to \R \cup \{\infty\}$ is polyconvex, then its extension by infinity to $\Rnn$ is also polyconvex.

In our study, we will deal with functions $W$ with values in $\R \cup \{\infty\}$ defined in $SL(n)\times \mS^{n-1}$, $\Rnn_+ \times \mS^{n-1}$ or $\Rnn \times \mS^{n-1}$.
We will say that they are polyconvex in the first variable (or, in short, polyconvex) if $W (\cdot, \vn)$ is polyconvex for all $\vn \in \mS^{n-1}$.

We now recall the classical concept of quasiconvexity.
Its definition is done so that the function can take infinite values (see, e.g., \cite{BaMu84}).

\begin{definition}\label{de:quasiconvex}
A Borel function $W_0 : \Rnn \to \R \cup \{\infty\}$ is quasiconvex if for all $F \in \Rnn$ and all $\vp \in W^{1, \infty} (B(0,1), \Rn)$ with $\vp (x) = Fx$ on $\partial B(0,1)$, we have
\[
 W_0(F) \leq \fint_{B(0,1)} W_0(D\vp) \, \dd x .
\]
\end{definition}

The equality $\vp (x) = Fx$ on $\partial B(0,1)$ is understood in the sense of traces.
A Borel function $W_0 : SL(n) \to \R \cup \{\infty\}$ or $W_0 : \Rnn_+ \to \R \cup \{\infty\}$ is quasiconvex if its extension by infinity is quasiconvex.

When $W$ takes always finite values, there are some possible equivalent definitions of its \emph{quasiconvexification} (see, e.g., \cite{Dacorogna08}), but when $W$ is infinity in some parts of its domain, the definitions are no longer equivalent.
We adopt that of \cite{CoDo15}, which is the natural one corresponding to Definition \ref{de:quasiconvex} and reads as follows.

\begin{definition}\label{de:quasiconvexification}
The quasiconvexification $W_0^{qc}: \Rnn \to \R \cup \{\infty\}$ of a Borel function $W : \Rnn \to \R \cup \{\infty\}$
is defined as
\[
 W_0^{qc} (F) := \inf \left\{\fint_{B(0,1)} W_0(D\vp) \, \dd x: \, \vp\in W^{1,\infty}(B(0,1),\R^{n}), \, \vp(x)=Fx \text{ on } \partial B(0,1)\right\} .
\] 
\end{definition}

%That the quasiconvexification is quasiconvex is a standard result when the function takes always finite values (see \cite{Dacorogna82}).
%When it takes infinite values, the result was proved in \cite[Lemma 3.4]{CoDo15} under the growth conditions common of their article.
%The proof under our growth conditions is the same, since only Lemma \ref{lem conv uj} is used.
%In any case, the next result is only stated for completeness, since it is not used in this work.

%\begin{lemma}
%Let $W$ satisfy (W***).
%Then $W^{qc}$ is quasiconvex.
%\end{lemma}

For functions $W : \Rnn \times \mS^{n-1} \to \R \cup \{\infty\}$, its quasiconvexification $W^{qc}$ refers to the first variable.
It is well known that a finite-valued quasiconvex function is rank-one convex; in particular, it is continuous.
When the function takes infinite values, this fact was proved in \cite{Fonseca88}.
For functions $W : \Rnn \times \mS^{n-1} \to \R \cup \{\infty\}$, the corresponding continuity result is as follows.

\begin{proposition}\label{pr:Wqccont}
Assume that $W: \R^{n\times n}_{+} \times \mS^{n-1} \to [0,\infty)$ is continuous and there exists an $h:[0,2] \to [0,\infty)$ with $\lim_{t\to 0} h(t)=0$ such that for all $F \in \Rnn_+$ and $\vn,\vm \in \mS^{n-1}$,
\begin{equation}\label{eq:modcontW}
 \left| W(F,\vn)-W(F,\vm) \right|\le h \left( |\vn-\vm| \right) W(F,\vn)  .
\end{equation}
Extend $W$ by infinity outside $\R^{n\times n}_{+} \times \mS^{n-1}$.
Then $W^{qc}|_{\R^{n\times n}_{+} \times \mS^{n-1}}$ is continuous.
\end{proposition}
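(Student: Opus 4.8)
The plan is to decouple the two arguments of $W^{qc}$: first establish that $W^{qc}(\cdot,\vn)$ is continuous on $\R^{n\times n}_{+}$ for each fixed $\vn$, then show that the multiplicative estimate \eqref{eq:modcontW} is inherited by $W^{qc}$ in the $\vn$-variable, and finally combine the two facts by a triangle inequality.

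For the first step, fix $\vn \in \mS^{n-1}$. Using $\vp(x)=Fx$ as a competitor in Definition \ref{de:quasiconvexification} gives $W^{qc}(F,\vn) \le W(F,\vn) < \infty$ for every $F \in \R^{n\times n}_{+}$, so $W^{qc}(\cdot,\vn)$ is real-valued, hence locally bounded, on the open set $\R^{n\times n}_{+}$. The function $W(\cdot,\vn)$, extended by $+\infty$ outside $\R^{n\times n}_{+}$, is precisely of the type treated in \cite{CoDo15} (continuous and finite on $\R^{n\times n}_{+}$, and $+\infty$ on its complement), so by the analysis there $W^{qc}(\cdot,\vn)$ is rank-one convex; being also locally bounded on $\R^{n\times n}_{+}$ and rank-one (hence separately) convex, it is locally Lipschitz there. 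Thus $W^{qc}(\cdot,\vn)|_{\R^{n\times n}_{+}}$ is continuous for each $\vn$.

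For the second step, fix $F \in \R^{n\times n}_{+}$ and $\vn,\vm \in \mS^{n-1}$, and let $\vp \in W^{1,\infty}(B(0,1),\R^{n})$ with $\vp(x)=Fx$ on $\partial B(0,1)$ be any competitor with $\fint_{B(0,1)} W(D\vp,\vn)\,\dd x < \infty$. Then $\det D\vp>0$ a.e., so \eqref{eq:modcontW} may be applied to $D\vp(x)$ for a.e.\ $x$, giving $W(D\vp,\vm) \le (1+h(|\vn-\vm|))\,W(D\vp,\vn)$ a.e.; averaging over $B(0,1)$ and taking the infimum over all such $\vp$ — which still computes $W^{qc}(F,\vn)$, because $\vp(x)=Fx$ is admissible with finite energy and the infinite-energy competitors are irrelevant to the infimum — yields $W^{qc}(F,\vm) \le (1+h(|\vn-\vm|))\,W^{qc}(F,\vn)$. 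Exchanging $\vn$ and $\vm$ and combining, one obtains, for every $F \in \R^{n\times n}_{+}$,
\[
 \left| W^{qc}(F,\vn)-W^{qc}(F,\vm) \right| \le h(|\vn-\vm|)\,\bigl(1+h(|\vn-\vm|)\bigr)\,W^{qc}(F,\vn) ,
\]
i.e.\ $W^{qc}$ obeys an estimate of the same form as \eqref{eq:modcontW} on $\R^{n\times n}_{+} \times \mS^{n-1}$. For the third step, given $(F_k,\vn_k)\to(F,\vn)$ inside $\R^{n\times n}_{+} \times \mS^{n-1}$, one writes $W^{qc}(F_k,\vn_k)-W^{qc}(F,\vn) = \bigl(W^{qc}(F_k,\vn_k)-W^{qc}(F_k,\vn)\bigr) + \bigl(W^{qc}(F_k,\vn)-W^{qc}(F,\vn)\bigr)$: the second bracket tends to $0$ by the first step and forces $\{W^{qc}(F_k,\vn)\}_{k}$ to be bounded, so the first bracket is controlled by $h(|\vn_k-\vn|)\,(1+h(|\vn_k-\vn|))\,W^{qc}(F_k,\vn)\to 0$, and continuity follows.

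The only genuinely delicate point is the rank-one convexity — hence continuity where finite — of the quasiconvexification $W^{qc}(\cdot,\vn)$: for integrands that equal $+\infty$ on non-orientation-preserving matrices, the infimum-defined envelope of Definition \ref{de:quasiconvexification} is not obviously quasiconvex, so this step rests on the results of \cite{CoDo15} (compare \cite{Fonseca88}), and some care is needed to check that restricting the competing maps to those with $\det D\vp>0$ a.e.\ leaves the infimum unchanged. Once that is granted, the transfer of \eqref{eq:modcontW} and the concluding triangle inequality are routine.
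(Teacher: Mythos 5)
Your proof is correct and follows essentially the same route as the paper: continuity of $W^{qc}(\cdot,\vn)$ on $\R^{n\times n}_{+}$ for fixed $\vn$ is imported from the theory of quasiconvexification of extended-valued integrands (the paper cites \cite[Th.\ 2.4 and Prop.\ 2.3]{Fonseca88} for precisely this, which is the right primary reference rather than \cite{CoDo15}), the $\vn$-dependence is controlled by applying \eqref{eq:modcontW} under the integral to finite-energy competitors and passing the estimate through the infimum, and the two are combined by a triangle inequality. The only difference is cosmetic: you transfer \eqref{eq:modcontW} multiplicatively, obtaining $\left| W^{qc}(F,\vn)-W^{qc}(F,\vm) \right|\le h(|\vn-\vm|)\left(1+h(|\vn-\vm|)\right)W^{qc}(F,\vn)$, whereas the paper proves the additive bound $M_G\,h(|\vn-\vm|)$ with $M_G=\sup_{\vm\in\mS^{n-1}}W(G,\vm)$ via near-minimizers and then needs the locally uniform constant $M_{F,\delta}$; both work, your variant simply drawing the required local boundedness of $W^{qc}(F_k,\vn)$ from the first step instead of from the continuity of $W$.
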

\begin{proof}
First we prove that for each $G \in \R^{n\times n}_{+}$ there exists $M_G>0$ such that for all $\vn , \vl \in \mS^{n-1}$,
\begin{equation}\label{eq:Wqccont}
 \left| W^{qc}(G,\vn)-W^{qc}(G,\vl) \right| \le M_G \, h(|\vn-\vec{\ell}|) .
\end{equation}
Indeed, fix $\ve>0$ and for each $\vm\in\mS^{n-1}$ let $\psi_{\vm} \in W^{1,\infty}(B(0,1),\R^{n})$ be such that $\psi (x)=Gx$ on $\partial B(0,1)$ and
\[
 \fint_{B(0,1)}W(D\psi_{\vm},\vm)\, \dd x \leq W^{qc}(G,\vm) + \ve.
\]
Define $M_G=\sup_{\vm\in\mS^{n-1}}W(G,\vm)$, which satisfies $M_G < \infty$ thanks to the continuity of $W$.
Moreover, for each $\vm\in\mS^{n-1}$,
\[
 W^{qc}(G,\vm) \le W(G,\vm) \le M_G ,
\]
so
\[
 \sup_{\vm\in\mS^{n-1}} \fint_{B(0,1)}W(D\psi_{\vm},\vm)\, \dd x \leq M_G + \ve.
\]
Now, for all $\vn, \vl \in\mS^{n-1}$,
\begin{align*}
 & W^{qc}(G,\vn)-W^{qc}(G,\vl) \le \fint_{B(0,1)} \left[ W(D\psi_{\vl},\vn)-W(D\psi_{\vl},\vl) \right] \dd x+\ve\\
&\le h(|\vn-\vec{\ell}|)\fint_{B(0,1)}W(D\psi_{\vl},\vl)\, \dd x+ \ve \le h (|\vn-\vec{\ell}|) \left(M_G+\ve\right) + \ve.
\end{align*}
As this is true for all $\ve>0$ we obtain
\[
 W^{qc}(G,\vn)-W^{qc}(G,\vl) \le M_G \, h (|\vn-\vec{\ell}|) ,
\]
and, by the symmetry of the argument we conclude \eqref{eq:Wqccont}.

Now let $F \in \R^{n\times n}_{+}$ and $\vl \in \mS^{n-1}$ and fix $\ve>0$.
By \cite[Th.\ 2.4 and Prop.\ 2.3]{Fonseca88}, $W^{qc} (\cdot, \vl)$ is continuous.
Therefore, there exists $\delta>0$ such that if $G \in \R^{n\times n}_{+}$ satisfies $|G - F| \leq \delta$ then
\[
 \left| W^{qc} (G, \vl) - W^{qc} (F, \vl) \right| \leq \ve ,
\]
so for all $\vn \in \mS^{n-1}$ we have, using \eqref{eq:Wqccont} and the triangle inequality,
\[
 \left| W^{qc} (G, \vn) - W^{qc} (F, \vl) \right| \leq M_G \, h(|\vn-\vec{\ell}|) + \ve \leq M_{F,\delta} \, h(|\vn-\vec{\ell}|) + \ve,
\]
where $M_{F,\delta} := \sup \left\{ M_G : \, G \in \Rnn_+ , \, |G - F| \leq \delta \right\}$, which is finite because of the continuity of $W$.
This concludes the proof.
\end{proof}

The proof under incompressibility is analogous and will be omitted.
Its statement is as follows.
\begin{proposition}
Assume that $W: SL(n) \times \mS^{n-1} \to [0,\infty)$ is continuous and there exists an $h:[0,2] \to [0,\infty)$ with $\lim_{t\to 0} h(t)=0$ such that for all $F \in SL(n)$ and $\vn,\vm \in \mS^{n-1}$, inequality \eqref{eq:modcontW} holds.
Extend $W$ by infinity outside $SL(n) \times \mS^{n-1}$.
Then $W^{qc}|_{SL(n) \times \mS^{n-1}}$ is continuous.
\end{proposition}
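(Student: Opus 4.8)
The plan is to run the two-step argument behind Proposition~\ref{pr:Wqccont} almost verbatim, replacing $\R^{n\times n}_{+}$ by $SL(n)$ throughout, and to isolate the one place where incompressibility genuinely changes something.

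\textbf{Step 1: a modulus of continuity in the director, locally uniform in the matrix variable.} For $G\in SL(n)$ set $M_G:=\sup_{\vm\in\mS^{n-1}}W(G,\vm)$, which is finite since $W(G,\cdot)$ is continuous on the compact $\mS^{n-1}$. Exactly as for \eqref{eq:Wqccont}, I would prove
\[
 \left| W^{qc}(G,\vn)-W^{qc}(G,\vl) \right| \le M_G\,h(|\vn-\vl|), \qquad \vn,\vl\in\mS^{n-1}.
\]
The argument is unchanged: given $\ve>0$ and $\vm\in\mS^{n-1}$, pick $\psi_\vm\in W^{1,\infty}(B(0,1),\Rn)$ with $\psi_\vm(x)=Gx$ on $\partial B(0,1)$ and $\fint_{B(0,1)}W(D\psi_\vm,\vm)\,\dd x\le W^{qc}(G,\vm)+\ve\le M_G+\ve$. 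The only new remark is that, since $W=\infty$ off $SL(n)\times\mS^{n-1}$, such a near-optimal $\psi_\vm$ necessarily has $D\psi_\vm(x)\in SL(n)$ for a.e.\ $x$, so \eqref{eq:modcontW} applies a.e.\ to the integrand $W(D\psi_\vl(x),\cdot)$ and yields
\[
 W^{qc}(G,\vn)-W^{qc}(G,\vl)\le h(|\vn-\vl|)\fint_{B(0,1)}W(D\psi_\vl,\vl)\,\dd x+\ve\le h(|\vn-\vl|)(M_G+\ve)+\ve ;
\]
letting $\ve\to0$ and symmetrizing gives the claim.

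\textbf{Step 2: continuity in the matrix variable along $SL(n)$.} This is the one genuine difference. In Proposition~\ref{pr:Wqccont} the continuity of $W^{qc}(\cdot,\vl)$ on the \emph{open} set $\R^{n\times n}_{+}$ was quoted from \cite[Th.~2.4, Prop.~2.3]{Fonseca88}; now $W^{qc}(\cdot,\vl)$ is finite exactly on $SL(n)$ — indeed any finite-energy competitor has gradient in $SL(n)$ a.e., and since $\det$ is a null Lagrangian this forces $\det F=\fint_{B(0,1)}\det D\vp=1$, so that $SL(n)$ is itself a quasiconvex set — and $SL(n)$ has empty interior. I would therefore establish continuity of $W^{qc}(\cdot,\vl)$ as a function on the manifold $SL(n)$ directly, using that it is rank-one convex along the rank-one segments contained in $SL(n)$ (note $t\mapsto\det(A+t\,a\otimes b)$ is affine, so a rank-one segment with both endpoints in $SL(n)$ stays in $SL(n)$, and these segments span the tangent space at each point). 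This is the step I expect to demand the most care: one must redo the local boundedness/Lipschitz argument behind \cite{Fonseca88} with $SL(n)$ in place of an open subset of $\Rnn$, or, equivalently, invoke the incompressible counterpart of that regularity result.

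\textbf{Conclusion.} With Steps 1 and 2 in hand the proof closes as before. Fix $F\in SL(n)$, $\vl\in\mS^{n-1}$ and $\ve>0$; by Step 2 there is $\delta>0$ with $|W^{qc}(G,\vl)-W^{qc}(F,\vl)|\le\ve$ whenever $G\in SL(n)$ and $|G-F|\le\delta$. Then, by Step 1 and the triangle inequality, for all such $G$ and all $\vn\in\mS^{n-1}$,
\[
 \left| W^{qc}(G,\vn)-W^{qc}(F,\vl) \right| \le M_G\,h(|\vn-\vl|)+\ve \le M_{F,\delta}\,h(|\vn-\vl|)+\ve,
\]
where $M_{F,\delta}:=\sup\{M_G:\,G\in SL(n),\ |G-F|\le\delta\}<\infty$ because $W$ is continuous on the compact set $\{G\in SL(n):|G-F|\le\delta\}\times\mS^{n-1}$. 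Since $h(|\vn-\vl|)\to0$ as $\vn\to\vl$, this gives joint continuity of $W^{qc}$ at $(F,\vl)$, as desired.
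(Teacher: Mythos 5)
Your proposal follows the same two-step scheme as the paper's proof of Proposition \ref{pr:Wqccont}, which is exactly what the paper intends when it declares the incompressible case ``analogous'': Step 1 (the estimate \eqref{eq:Wqccont} in the director variable via near-optimal test maps $\psi_{\vm}$ and \eqref{eq:modcontW}) carries over verbatim, and your remark that finite-energy competitors automatically have gradient in $SL(n)$ a.e.\ is the correct justification for applying \eqref{eq:modcontW} under the integral. Where you add genuine value is Step 2: you correctly point out that the citation of \cite[Th.\ 2.4, Prop.\ 2.3]{Fonseca88} used in the compressible case does not transfer literally, because there $W^{qc}(\cdot,\vl)$ is finite on the open set $\Rnn_+$, whereas here it is finite precisely on $SL(n)$ (by the null-Lagrangian property of the determinant, as you note), a set with empty interior in $\Rnn$. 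This is the subtlety that the paper's ``analogous and will be omitted'' glosses over. Your proposed repair --- rank-one convexity of the extended-valued quasiconvexification along rank-one segments contained in $SL(n)$ (those $a\otimes b$ with $b\cdot A^{-1}a=0$, which indeed span the tangent space $\{X:\tr(A^{-1}X)=0\}$), combined with local boundedness from $W^{qc}\le W$, to get local Lipschitz continuity on the manifold $SL(n)$ --- is the right idea and is essentially what the incompressible analysis in \cite{CoDo15} does. Be aware, though, that this step is only sketched in your write-up: passing from separate convexity along the admissible rank-one directions to continuity requires chaining finitely many such moves to connect nearby points of $SL(n)$, which is more delicate than the open-set staircase argument and should either be written out or attributed explicitly to the incompressible counterpart in \cite{CoDo15}.
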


We now explain the concept of \emph{tangential quasiconvexity} and \emph{tangential quasiconvexification}.
For this, we fix a $C^{1}$ manifold $\mathcal{M}$ embedded in $\R^{n}$ (although we will always take $\mathcal{M} = \mS^{n-1}$); all concepts of \emph{tangential} are referred to the manifold $\mathcal{M}$.
For each $z \in \mathcal{M}$ we denote the tangent space of $\mathcal{M}$ at $z$ by $T_z \mathcal{M}$.
Given a Sobolev function $\vn$ defined in an open set $U \subset \R^n$ such that $\vn (y) \in \mathcal{M}$ for a.e.\ $y \in U$, we have that $D \vn (y) \in (T_{\vn(y)} \mathcal{M})^n$ for a.e.\ $y \in U$.
Therefore, the function $V$ of \eqref{eq:Inem} need only be defined in
\[
 T^n \mathcal{M} := \left\{ (z, \zeta) : \, z \in \mathcal{M}, \, \zeta \in (T_z \mathcal{M})^n \right\} .
\]
Thus, we consider a Borel function $V : T^n \mathcal{M} \to [0, \infty)$.
The following definition is due to Dacorogna \emph{et al.}\ \cite{DaFoMaTr99} when $V$ does not depend on the first variable.
The natural definition for a $V$ defined in the whole $T^n \mathcal{M}$ is straightforward (see \cite{AlLe01}).

\begin{definition}
\begin{enumerate}[a)]
Let $V : T^n \mathcal{M} \to [0, \infty)$ be a Borel function.
\item
$V$ is tangentially quasiconvex if for all $(z, \zeta) \in T^n \mathcal{M}$ and all $\vp \in W^{1,\infty} (B(0,1), T_z\mathcal{M})$ with $\vp (y) = \zeta y$ on $\partial B (0,1)$ we have
\[
 V (z,\zeta) \leq \fint_{B(0,1)} V(z, D\vp(y))\, \dd y .
\]

\item
The tangential quasiconvexification $V^{tqc} : T^n \mathcal{M} \to [0, \infty)$ of $V$ is
\begin{align*}
 & V^{tqc} (z,\zeta) \\
 & := \inf \left\{ \fint_{B(0,1)} \! V(z, D\vp(y))\, \dd y : \, \vp\in W^{1,\infty} (B(0,1), T_z\mathcal{M}) , \, \vp (y) = \zeta y \text{ on } \partial B (0,1) \right\}.
\end{align*}
\end{enumerate}
\end{definition}

The equality $\vp (y) = \zeta y$ on $\partial B (0,1)$ is understood in the sense of traces and we are regarding $\zeta$ as an $n \times n$ matrix.
Note that the fact $\vp \in W^{1,\infty} (B(0,1), T_z\mathcal{M})$ implies $D\vp (y) \in (T_z\mathcal{M})^n$ for a.e.\ $y \in B(0,1)$.
Standard arguments (see, e.g., \cite[Prop.\ 5.11]{Dacorogna08}) show that the choice of $B(0,1)$ as domain of integration is irrelevant.

From the definitions, it is immediate to check that $V^{tqc}$ is tangentially quasiconvex and that $V$ is tangentially quasiconvex if and only if $V = V^{tqc}$.

The next proposition and theorem summarize the main results of \cite{DaFoMaTr99}; again, the formulation is adapted to cover a dependence of $V$ on the first variable as well (see \cite{AlLe01}).

\begin{proposition}\label{prop:tqc}
\begin{enumerate}[a)]
\item
For each $z \in \mathcal{M}$, let $P_z \in \Rnn$ be the matrix corresponding to the orthogonal projection from $\Rn$ onto $T_z\mathcal{M}$.
Define $\bar{V} : \mathcal{M} \times \Rnn \to [0, \infty)$ as
\[
 \bar{V} (z, \zeta) := V(z, P_z \zeta)
\]
and let $\bar{V}^{qc}$ be the quasiconvexification of $\bar{V}$ with respect to the second variable.
Then $V^{tqc} = \bar{V}^{qc}|_{T^n \mathcal{M}}$.

\item Let $\mathcal{M} = \mS^{n-1}$.
Define $\bar{V} : \mS^{n-1} \times \Rnn \to [0, \infty)$ as
\[
 \bar{V} (z, \zeta) := V \left( z, (I - z \otimes z) \zeta \right)
\]
and let $\bar{V}^{qc}$ be the quasiconvexification of $\bar{V}$ with respect to the second variable.
Then $V^{tqc} = \bar{V}^{qc}|_{T^n \mS^{n-1}}$.

\end{enumerate}
\end{proposition}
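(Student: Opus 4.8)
The plan is to establish part a) directly from the definitions and then deduce b) as the special case $\mathcal{M}=\mS^{n-1}$. Indeed, for $z\in\mS^{n-1}$ the tangent space $T_z\mS^{n-1}$ is the orthogonal complement of $z$, and the matrix of the orthogonal projection onto it is exactly $I-z\otimes z$ (because $|z|=1$ makes $I-z\otimes z$ symmetric and idempotent with range $z^{\perp}$); so once a) is proved, b) follows by merely substituting $P_z=I-z\otimes z$. Throughout, $z\in\mathcal{M}$ is \emph{fixed}, so $P_z$ is a fixed linear map, and I will repeatedly use three elementary facts: $T_z\mathcal{M}$ is a linear subspace of $\Rn$; $P_z$ maps $\Rn$ onto $T_z\mathcal{M}$ and restricts to the identity there; and consequently, if $(z,\zeta)\in T^n\mathcal{M}$ (i.e.\ the columns of the matrix $\zeta$ lie in $T_z\mathcal{M}$), then $P_z\zeta=\zeta$. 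Note also that $\bar V$ is a well-defined Borel function on $\mathcal{M}\times\Rnn$ with values in $[0,\infty)$, since the columns of $P_z\zeta$ always lie in $T_z\mathcal{M}$; hence $\bar V^{qc}$ makes sense.

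For the inequality $\bar V^{qc}(z,\zeta)\le V^{tqc}(z,\zeta)$ with $(z,\zeta)\in T^n\mathcal{M}$, I would take an arbitrary competitor $\vp\in W^{1,\infty}(B(0,1),T_z\mathcal{M})$ with $\vp(y)=\zeta y$ on $\partial B(0,1)$ in the infimum defining $V^{tqc}(z,\zeta)$. Since $\vp$ is valued in the subspace $T_z\mathcal{M}$, we have $D\vp(y)\in(T_z\mathcal{M})^n$ a.e., hence $P_z D\vp(y)=D\vp(y)$ and therefore $\bar V(z,D\vp(y))=V(z,P_zD\vp(y))=V(z,D\vp(y))$ a.e.; moreover $\vp$ is admissible in the infimum defining $\bar V^{qc}(z,\zeta)$ because $T_z\mathcal{M}\subset\Rn$. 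Averaging over $B(0,1)$ and taking the infimum over $\vp$ yields the claim.

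For the reverse inequality $V^{tqc}(z,\zeta)\le\bar V^{qc}(z,\zeta)$, I would take an arbitrary $\psi\in W^{1,\infty}(B(0,1),\Rn)$ with $\psi(y)=\zeta y$ on $\partial B(0,1)$ and set $\vp:=P_z\circ\psi$. Since $P_z$ is linear and bounded, $\vp\in W^{1,\infty}(B(0,1),T_z\mathcal{M})$ with $D\vp(y)=P_zD\psi(y)$ a.e., and on $\partial B(0,1)$ we have $\vp(y)=P_z\zeta y=\zeta y$, using $P_z\zeta=\zeta$. Thus $\vp$ is admissible for $V^{tqc}(z,\zeta)$, and
\[
 \fint_{B(0,1)} V(z,D\vp(y))\,\dd y=\fint_{B(0,1)} V(z,P_zD\psi(y))\,\dd y=\fint_{B(0,1)}\bar V(z,D\psi(y))\,\dd y .
\]
Taking the infimum over $\psi$ completes the proof of a), and b) follows as explained.

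I do not expect a genuine obstacle here: the whole content is that, for fixed $z$, composition with the fixed linear retraction $P_z$ matches up the admissible competitors for the two infima while leaving the averaged integrand unchanged. The only points deserving a little care are (i) checking that this composition preserves the affine boundary datum $y\mapsto\zeta y$, which is precisely where the hypothesis $(z,\zeta)\in T^n\mathcal{M}$ is used, via $P_z\zeta=\zeta$; and (ii) noting that the choice of $B(0,1)$ as the domain of integration is immaterial — as already recalled in the text — so that no rescaling subtleties arise.
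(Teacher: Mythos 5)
Your proof is correct. Note that the paper does not actually prove Proposition \ref{prop:tqc}: it states it as a summary of results from Dacorogna \emph{et al.}\ \cite{DaFoMaTr99} (with the $z$-dependence as in \cite{AlLe01}) and cites those references instead. Your argument is the standard one behind that cited result, and it is complete: for fixed $z$ the two infima have the same value because composition with the linear retraction $P_z$ maps the competitors for $\bar V^{qc}(z,\zeta)$ into competitors for $V^{tqc}(z,\zeta)$ without changing the averaged integrand (using $D(P_z\circ\psi)=P_zD\psi$ and $P_z\zeta=\zeta$ for $(z,\zeta)\in T^n\mathcal{M}$ to preserve the affine boundary datum), while every competitor for $V^{tqc}(z,\zeta)$ is already a competitor for $\bar V^{qc}(z,\zeta)$ on which $\bar V(z,\cdot)$ and $V(z,\cdot)$ agree, since $D\vp\in (T_z\mathcal{M})^n$ a.e.\ forces $P_zD\vp=D\vp$. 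The reduction of b) to a) via $P_z=I-z\otimes z$ is likewise correct. The only point worth making explicit, if you wanted a fully self-contained write-up, is the measurability of $(z,\zeta)\mapsto\bar V(z,\zeta)$, which follows from the continuity of $z\mapsto P_z$ for a $C^1$ manifold; for fixed $z$, as in your argument, only Borel measurability of $\bar V(z,\cdot)$ is needed and that is immediate.
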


\begin{theorem}\label{th:tqc}
Let $\Om' \subset \R^n$ be open and bounded.
Let $s \geq 1$.
Let $V : T^n \mathcal{M} \to [0, \infty)$ be continuous and satisfy
\[
 V (z, \zeta) \leq C \left( 1 + \left| \zeta \right|^s \right) ,Ê\qquad (z, \zeta) \in T^n \mathcal{M} 
\]
for some $C > 0$.
Let $\vn \in W^{1,s} (\Om', \mathcal{M})$.
The following hold:

\begin{enumerate}[a)]
\item 
If $V$ is tangentially quasiconvex then, for any sequence $\{ \vn_j \}_{j \in \N} \subset W^{1,s} (\Om', \mathcal{M})$ converging weakly to $\vn$ in $W^{1,s} (\Om', \mathcal{M})$, we have
\[
 \int_{\Om'} V (\vn (y), D \vn (y)) \, \dd y \leq \liminf_{j \to \infty} \int_{\Om'} V (\vn_j (y), D \vn_j (y)) \, \dd y .
\]

\item 
$\displaystyle
 \inf \left\{ \liminf_{j \to \infty} \int_{\Om} V (\vn_j (y), D \vn_j (y)) \, \dd y : \vn_j \weakc \vn \text{ in } W^{1,s} (\Om', \mathcal{M}) \right\} = \int_{\Om'} V^{tqc} (\vn (y), D \vn (y)) \, \dd y .
$
\end{enumerate}
\end{theorem}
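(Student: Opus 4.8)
The plan is to reduce the statement to the $\vn$‑independent results of Dacorogna, Fonseca, Mal\'y and Trivisa~\cite{DaFoMaTr99}, treating the dependence of $V$ on its first variable as a lower‑order perturbation in the spirit of~\cite{AlLe01}. Since in our setting $\mathcal M=\mS^{n-1}$ (which is all that is needed), I use two concrete objects: the orthogonal projector $P_z:=I-z\otimes z$ onto $T_z\mathcal M$, so that $\bar V(z,\zeta)=V(z,P_z\zeta)$ is the map of Proposition~\ref{prop:tqc}; and the nearest‑point retraction $\pi$ onto $\mathcal M$, which is smooth on a tubular neighbourhood $\mathcal U\supset\mathcal M$ and satisfies $D\pi(z)=P_z$ for $z\in\mathcal M$. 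I would prove (a), then deduce the inequality ``$\geq$'' of (b) from (a), and finally establish ``$\leq$'' in (b), which is the substantial part.

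For (a) I would first record that $D\vn_j(y)\in(T_{\vn_j(y)}\mathcal M)^n$ for a.e.\ $y$, hence $P_{\vn_j(y)}D\vn_j(y)=D\vn_j(y)$, so that by tangential quasiconvexity and Proposition~\ref{prop:tqc} one has $V(\vn_j,D\vn_j)=\bar V^{qc}(\vn_j,D\vn_j)$ a.e., and likewise for $\vn$. The integrand $\bar V^{qc}$ is quasiconvex in its second variable, continuous, and inherits the bound $\bar V^{qc}(z,\zeta)\leq C(1+|\zeta|^s)$ from $\bar V$ because $|P_z\zeta|\leq|\zeta|$. As $\Om'$ is bounded and $\vn_j\weakc\vn$ in $W^{1,s}$, Rellich's theorem gives $\vn_j\to\vn$ in $L^s(\Om')$; passing to a subsequence realising the lower limit and then to an a.e.\ convergent further subsequence, the classical weak lower semicontinuity theorem for Carath\'eodory integrands quasiconvex in the gradient (e.g.~\cite{AcFu84} or~\cite[Ch.~8]{Dacorogna08}), with $\vn_j$ filling the state slot and the uniform continuity of $V$ on the compact $\mS^{n-1}$ taking care of that slot, yields
\[
 \liminf_{j\to\infty}\int_{\Om'}V(\vn_j,D\vn_j)\,\dd y=\liminf_{j\to\infty}\int_{\Om'}\bar V^{qc}(\vn_j,D\vn_j)\,\dd y\geq\int_{\Om'}\bar V^{qc}(\vn,D\vn)\,\dd y=\int_{\Om'}V(\vn,D\vn)\,\dd y .
\]
For ``$\geq$'' in (b) I would apply (a) to the tangentially quasiconvex integrand $V^{tqc}$ together with $V\geq V^{tqc}$: any admissible sequence satisfies $\liminf_j\int_{\Om'}V(\vn_j,D\vn_j)\geq\liminf_j\int_{\Om'}V^{tqc}(\vn_j,D\vn_j)\geq\int_{\Om'}V^{tqc}(\vn,D\vn)$, while the constant sequence $\vn_j\equiv\vn$ shows the infimum is finite by the growth bound; taking the infimum over admissible sequences gives the inequality.

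For ``$\leq$'' in (b) I would build, for each $\ve>0$, a sequence $\vn_j\weakc\vn$ in $W^{1,s}(\Om',\mathcal M)$ with $\limsup_j\int_{\Om'}V(\vn_j,D\vn_j)\leq\int_{\Om'}V^{tqc}(\vn,D\vn)+\ve$, and then diagonalise. The construction is local: since a.e.\ point of $\Om'$ is a point of approximate differentiability and a Lebesgue point of $\vn$ and $D\vn$, Vitali's lemma provides pairwise disjoint balls $B_k=B(y_k,r_k)\ssubset\Om'$ covering $\Om'$ up to a null set, on each of which $\vn$ is close in $L^s$ to its linearisation $a_k(y):=\vn(y_k)+\zeta_k(y-y_k)$ with $\zeta_k:=D\vn(y_k)\in(T_{\vn(y_k)}\mathcal M)^n$, the pair $(\vn,D\vn)$ is close in mean to $(\vn(y_k),\zeta_k)$, and $r_k$ is so small that $a_k(B_k)\subset\mathcal U$ (possible since $\zeta_k$ is tangent, so $\dist(a_k(y),\mathcal M)=O(r_k^2)$). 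On $B_k$ I would take a near‑optimal $\vp_k\in W^{1,\infty}(B(0,1),T_{\vn(y_k)}\mathcal M)$ with $\vp_k(w)=\zeta_k w$ on $\partial B(0,1)$ and $\fint_{B(0,1)}V(\vn(y_k),D\vp_k)\,\dd w\leq V^{tqc}(\vn(y_k),\zeta_k)+\ve$, rescale and periodically replicate it at a small scale $\delta$ to obtain $\psi_{k,\delta}\in W^{1,\infty}(B_k,T_{\vn(y_k)}\mathcal M)$ equal to $\zeta_k(\cdot-y_k)$ near $\partial B_k$, with $\psi_{k,\delta}\to\zeta_k(\cdot-y_k)$ uniformly and $\limsup_{\delta\to0}\fint_{B_k}V(\vn(y_k),D\psi_{k,\delta})\,\dd y\leq V^{tqc}(\vn(y_k),\zeta_k)+\ve$, and finally set $\vn_j:=\pi(\vn(y_k)+\psi_{k,\delta_j})$ on $B_k$ for a suitable $\delta_j\to0$. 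This is $\mathcal M$‑valued and well defined since $\vn(y_k)+\psi_{k,\delta_j}\in\mathcal U$, it equals $\pi(a_k)$ near $\partial B_k$ so the pieces glue into a $W^{1,s}$ map, and one then checks: $\vn_j\to\vn$ in $L^s(\Om')$ up to $O(\ve)$; $D\vn_j=D\pi(\vn(y_k)+\psi_{k,\delta_j})\,D\psi_{k,\delta_j}$ is bounded in $L^s(\Om')$, hence $\vn_j\weakc\vn$ in $W^{1,s}(\Om',\mathcal M)$; and, using $D\pi(\vn(y_k)+\psi_{k,\delta_j})\to P_{\vn(y_k)}$ uniformly, $P_{\vn(y_k)}D\psi_{k,\delta_j}=D\psi_{k,\delta_j}$, the uniform continuity of $V$, and the continuity of $V^{tqc}$ (Proposition~\ref{prop:tqc} plus a Fonseca‑type argument as in Proposition~\ref{pr:Wqccont}), one gets $\limsup_j\int_{B_k}V(\vn_j,D\vn_j)\leq\int_{B_k}\bigl(V^{tqc}(\vn,D\vn)+O(\ve)\bigr)$; summing over $k$ and discarding the leftover null‑ish set yields the claim.

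I expect the main obstacle to lie in two places, both already carried out in~\cite{DaFoMaTr99}. First, since no coercivity or lower growth bound on $V$ is assumed, neither the replications $\psi_{k,\delta}$ nor the resulting recovery sequence are a priori bounded, let alone equi‑integrable, in $W^{1,s}$; one must truncate the test maps $\vp_k$ by De Giorgi's slicing (as in~\cite{AcFu84}) to obtain competitors with equi‑integrable $|D\vp_k|^s$ at the cost of an arbitrarily small loss of energy, and this equi‑integrability is exactly what controls the contribution of the leftover set and of the error terms above. Second, the surgery forcing the sequence onto $\mathcal M$, namely composition with $\pi$, must perturb both the $L^s$ bound on $D\vn_j$ and the energy only to lower order; this relies on $\zeta_k$ being tangent (so the curvature of $\mathcal M$ enters at order $r_k^2$) and on $\psi_{k,\delta}$ staying uniformly close to the affine map. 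By contrast, the extra dependence of $V$ on $\vn$ is genuinely lower order: on each $B_k$ it is absorbed by freezing $\vn\approx\vn(y_k)$ and invoking uniform continuity of $V$ on the compact $\mS^{n-1}$, precisely as in~\cite{AlLe01}, so the whole argument reduces, modulo these perturbations, to the $\vn$‑independent statements of~\cite{DaFoMaTr99}.
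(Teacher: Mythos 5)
The paper does not actually prove Theorem \ref{th:tqc}: it is stated as a summary of the main results of \cite{DaFoMaTr99}, adapted as in \cite{AlLe01} to allow the dependence of $V$ on $\vn$, and is used as a black box. So there is no in-paper proof to compare against; what you have written is a reconstruction of the argument of those references. Your overall strategy is the right one and matches theirs: part a) by identifying $V$ with the restriction of the quasiconvex function $\bar V^{qc}$ via Proposition \ref{prop:tqc} and invoking Acerbi--Fusco-type lower semicontinuity (with the $\vn$-slot handled by compactness of $\mS^{n-1}$ and uniform continuity); the inequality $\geq$ in b) by applying a) to $V^{tqc}$; and the inequality $\leq$ in b) by blow-up at Lebesgue points, Vitali covering, periodic replication of near-optimal test maps, and nearest-point retraction onto the manifold, with the tangency of $\zeta_k$ making the retraction a second-order perturbation. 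You also correctly identify the two technical crutches (De Giorgi slicing for equi-integrability, curvature control for the retraction).

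There is, however, one step in your upper-bound construction that fails as written: the gluing. On each ball $B_k$ your competitor equals $\pi(a_k)$ near $\partial B_k$, where $a_k$ is the linearisation of $\vn$ at $y_k$; this is \emph{not} equal to $\vn$ on $\partial B_k$, and the traces from adjacent balls ($\pi(a_k)$ versus $\pi(a_{k'})$, or versus $\vn$ on the uncovered set) do not match, so the assembled map is not in $W^{1,s}(\Om',\mathcal M)$. The standard repair is to keep $\vn_j=\vn$ outside a finite subfamily of balls and, inside each $B_k$, to interpolate in a thin annulus near $\partial B_k$ between the oscillating construction and $\vn$ itself (a cut-off in the tangent space followed by $\pi$, which stays in the tubular neighbourhood because $\vn$ is $L^s$-close to $\vn(y_k)$ there); the extra energy in the annuli is exactly what the equi-integrability obtained from slicing, together with the smallness of $\fint_{B_k}(|\vn-a_k|^s+|D\vn-\zeta_k|^s)$, is needed to control. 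With that modification (and restricting to finitely many balls so that $\sup_k\|D\vp_k\|_{L^\infty}$ enters only over a set of full measure up to $\ve$), your argument goes through and coincides with the proof in \cite{DaFoMaTr99,AlLe01}.
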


As commented in \cite{Mucci09}, using Proposition \ref{prop:tqc}, we find that $V$ is tangentially quasiconvex if and only if it is the restriction of a quasiconvex function (in the second variable) $\bar{V} : \mathcal{M} \times \Rnn \to [0,\infty)$.
Since finite-valued quasiconvex functions are continuous (because they are rank-one convex), we infer that any tangentially quasiconvex $V : T^n \mathcal{M} \to [0, \infty)$ is continuous in the second variable.

\section{Class $\Ap$}
\label{sect:caprelaxenergy}

In this section we define the class $\Ap$ of functions that will be the object of this work.
Its main aim is to present the results showing that, similarly to what occurs in Sobolev spaces, under some additional conditions the cut-and-paste of functions in the class $\Ap$ is still in the class $\Ap$ (Lemma \ref{lem paste Apq}) and the composition of an orientation-preserving Lipschitz function with a function of class $\Ap$ is still in $\Ap$ (Lemma \ref{lem inner comp by lip}).
The reader not interested in the technicalities of the class $\Ap$ may omit this section and admit Lemmas \ref{lem paste Apq} and \ref{lem inner comp by lip}.

The class $\Ap$ consists, roughly, in the set of $u \in W^{1, p} (\Om, \Rn)$ such that $\det D u > 0$ a.e.\ and no \emph{cavitation} occurs.
Cavitation is the formation of voids in some materials in extension (see \cite{GeLi59} for the physical process and \cite{MuSp95,SiSp00,CoDe03,HeMo10,HeMo11,HeMo12} for some mathematical developments).
The class $\Ap$ was originally defined in M\"uller \cite{Muller88}, then used by Giaquinta \emph{et al.}\ \cite{GiMoSo98I}, and in Barchiesi \emph{et al.}\ \cite{BaHeMo17} it was proved the local invertibility and extra regularity properties.

This section consists of two subsections.
In Subsection \ref{subse:definitions} we define the class $\Ap$, together with many associated concepts, and state the known results that will be useful in Subsection \ref{subse:results}, where we prove the new results needed for the construction of the recovery sequence in Section \ref{sect:caprelaxrel}.

\subsection{Definitions and previous results}\label{subse:definitions}

This subsection presents the definition of $\Ap$ and its related concepts.
It also states the results that are useful in Subsection \ref{subse:results} in order to prove Lemmas \ref{lem paste Apq} and \ref{lem inner comp by lip}.

\begin{defn}\label{de:inj ae}
A function $u:\Omega\to \R^{n}$ is said to be injective a.e.\ in a subset $A$ of $\Omega$ if there exists a set $N \subset A$ such that $|N|=0$ and $u|_{A\setminus N}$ is injective.
 \end{defn}

We will use the following result.

\begin{proposition}\label{prop:O0}
Given $u \in W^{1,1} (\Om, \Rn)$ with $\det D u > 0$ a.e., there exists a measurable set $\Om_0 \subset \Om$ with $| \Om \setminus \Om_0| =0$ such that:
\begin{enumerate}[a)]
\item\label{item:O0a} $u|_{\Om_0}$ satisfies the change of variables formula.

\item\label{item:O0b} If for some $A \subset \Om$ the restriction $u|_A$ is injective a.e., then $u|_{A\cap \Om_0}$ is injective.
\end{enumerate}
\end{proposition}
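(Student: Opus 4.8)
The plan is to construct $\Om_0$ from a Lusin-type decomposition of $u$ into countably many pieces on which it coincides with $C^{1}$ diffeomorphisms, to deduce (a) from the classical area formula, and to deduce (b) by propagating a single coincidence of $u$ inside the good set into a positive-measure family of coincidences, a kind of measure-theoretic inverse function theorem.

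First I would use that a map in $W^{1,1}(\Om,\Rn)$ is approximately differentiable a.e.\ and invoke Federer's $C^{1}$ version of Lusin's theorem to produce pairwise disjoint compact sets $K_1,K_2,\dots\subset\Om$ with $\bigl|\Om\setminus\bigcup_{i}K_i\bigr|=0$ together with maps $g_i\in C^{1}(\Rn,\Rn)$ such that $u=g_i$ and $Du=Dg_i$ on $K_i$; since $\det Du>0$ a.e., after discarding a null set we may assume $\det Dg_i>0$ on $K_i$. As $\det Dg_i$ is continuous and positive on $K_i$, the inverse function theorem gives, around each point of $K_i$, a ball on which $g_i$ is a diffeomorphism onto its image; covering $K_i$ by countably many such balls and intersecting with them, I would refine the family so that each (relabelled) $K_i$ sits inside an open set $U_i$ on which $u$ agrees with a diffeomorphism $g_i$. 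In particular $u|_{K_i}$ is injective and bi-Lipschitz. Finally I would discard from each $K_i$ the null set of points at which $K_i$ fails to have Lebesgue density one, and set $\Om_0:=\bigcup_i K_i$, which is measurable and satisfies $|\Om\setminus\Om_0|=0$.

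For part (a), the change of variables (area) formula holds for each bi-Lipschitz restriction $u|_{K_i}$, and summing over $i$ and using the pairwise disjointness of the $K_i$ yields the formula for $u|_{\Om_0}$, with multiplicity function $y\mapsto\#\{x\in\Om_0:u(x)=y\}$. For part (b), I would argue by contradiction: if $u|_{A\setminus N}$ is injective with $|N|=0$ but $u(x_1)=u(x_2)=:y_0$ for some $x_1\ne x_2$ in $A\cap\Om_0$, then, since each $u|_{K_i}$ is injective, $x_1$ and $x_2$ lie in different pieces $K_i\ni x_1$, $K_j\ni x_2$ with $i\ne j$; pick disjoint open sets $U_1\ni x_1$, $U_2\ni x_2$ on which $u$ coincides with the diffeomorphisms $g_i$, $g_j$. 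Because $x_1$ is a density-one point of $K_i$ and $g_i$ is locally bi-Lipschitz, $u(A\cap K_i\cap U_1)$ has density one at $y_0$ — here one uses that, after discarding from $A$ a null set, $A$ may be assumed to have density one at each of its points, which is anyway automatic in the applications, where $A$ is open — and likewise for $u(A\cap K_j\cap U_2)$. Hence $S:=u(A\cap K_i\cap U_1)\cap u(A\cap K_j\cap U_2)$ has positive measure, and for $y\in S$ the points $a_y:=(g_i|_{U_1})^{-1}(y)$ and $b_y:=(g_j|_{U_2})^{-1}(y)$ lie in $A$, are distinct (the $U$'s are disjoint), and satisfy $u(a_y)=u(b_y)=y$; since the local inverses are bi-Lipschitz, $\{y\in S:a_y\in N\text{ or }b_y\in N\}$ is null, so for a.e.\ $y\in S$ we obtain distinct points of $A\setminus N$ with the same image, contradicting the injectivity of $u|_{A\setminus N}$. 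Thus $u|_{A\cap\Om_0}$ is injective.

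The main obstacle is part (b): the crux is the measure-theoretic inverse function step, namely upgrading a single coincidence $u(x_1)=u(x_2)$ with $x_1,x_2$ in the good set $\Om_0$ to a positive-measure family of coincidences. This is exactly what forces the density-one refinement of the pieces $K_i$ and the bi-Lipschitz control on the local inverses $g_i^{-1}$, and it is where the bookkeeping needed to keep the produced family inside $A\setminus N$ has to be done carefully; the area-formula statement (a) is then comparatively routine.
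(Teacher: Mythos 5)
The paper does not actually prove this proposition: part a) is delegated to \cite{Hajlasz93} (see also \cite[Prop.\ 2.6]{MuSp95}) and part b) to \cite[Lemma 3]{HeMo11}. Your argument is a self-contained reconstruction of essentially those proofs: the Federer--Whitney decomposition of an a.e.\ approximately differentiable map into countably many pieces on which it agrees with $C^1$ maps, refined via the inverse function theorem and the Lebesgue density theorem, is exactly the standard construction of $\Om_0$; part a) then follows from the classical area formula piece by piece, and part b) from the ``measure-theoretic inverse function'' step that upgrades one coincidence $u(x_1)=u(x_2)$ to a positive-measure family of coincidences. The ingredients you invoke (locally bi-Lipschitz diffeomorphisms preserve density points; Lipschitz maps send null sets to null sets, so the exceptional set $N$ contributes a null set of bad $y$) are all valid, so the route is the expected one and part a) is unproblematic.

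The one genuine issue is the point you flagged yourself in part b). Your argument needs $x_1$ and $x_2$ to be density points of $A$, and you obtain this by ``discarding from $A$ a null set''; but the conclusion concerns $u|_{A\cap\Om_0}$, not $u|_{A'\cap\Om_0}$ for a shrunken $A'$, so the discarding is not innocent. In fact, for completely arbitrary $A\subset\Om$ the statement as written cannot be proved: if $u$ is two-to-one on a set of full measure, then any full-measure $\Om_0$ contains a pair $x_1\neq x_2$ with $u(x_1)=u(x_2)$, and the two-point set $A=\{x_1,x_2\}$ is injective a.e.\ in the sense of Definition \ref{de:inj ae} (take $N=A$), while $u|_{A\cap\Om_0}$ is not injective. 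So the hypothesis must implicitly restrict $A$ --- to open sets, or at least to sets every point of which (in $A\cap\Om_0$) is a density point of $A$ --- and this is indeed how the proposition is used in the paper, where $A$ is always a ball or an open $U\ssubset\Om$. With that reading your proof is complete; without it no proof can exist, so the defect lies in the literal statement rather than in your argument.
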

Part \emph{\ref{item:O0a})} is due to \cite{Hajlasz93} (see also \cite[Prop.\ 2.6]{MuSp95}).
Part \emph{\ref{item:O0b})} is due to \cite[Lemma 3]{HeMo11}.
The set $\Om_0$ is not uniquely defined; it can be given a precise definition (see, \cite{MuSp95,CoDe03,HeMo11}) but this is not important in the sequel: given a $u$ we just fix any such $\Om_0$.

For any measurable set $A$ of $\Omega$, we define the geometric image of $A$ under $u$ as $u(A\cap \Omega_0)$, and we denote it by $\imG(u,A)$.

We will use the topological degree for continuous functions (see, e.g., \cite{Deimling85,FoGa95b}): if $U\subset \R^{n}$ is a bounded open set, $u: \bar{U}\to\R^{n}$ is continuous and $y\in\R^{n}\setminus u(\partial U)$, we denote by $\deg (u,U,y)$ the degree of $u$ in $U$ at $y$.
If $u:\partial U\to\R^{n}$ is continuous, its degree $\deg (u,U,\cdot)$ is defined as the degree of any continuous extension $\bar{u}:\bar{U}\to\R^{n}$, which exists thanks to Tietze's theorem and does not depend on the extension due to the homotopy-invariance of the degree (see, e.g., \cite[Th.\ 3.1.(d6)]{Deimling85}, \cite[Th.\ 2.4]{FoGa95b}).
If $u\in W^{1,p}(\partial U, \R^{n})$ with $p>n-1$, by Morrey's embedding, $u$ has a continuous representative. 
We define the degree of $u$ in $U$, written $\deg(u,U,\cdot)$, as the degree of its continuous representative.

Now, if $u \in W^{1,p} (\Om, \Rn)$ we define $u^*$ as its precise representative (see, e.g., \cite{Ziemer89}):
\[
 u^* (x) := \lim_{r \to 0} \fint_{B(x,r)} u(z) \, \dd z , 
\]
if that limit exists, and $u^*$ is undefined elsewhere.
It is well known that the above limit exists except on a set of $p$-capacity zero.

Next, we define the topological image (introduced by \v{S}ver\'{a}k \cite{Sverak88}; see also \cite{MuSp95}).
\begin{defn}\label{de:im top}
Let $u\in W^{1,p}(\Omega, \R^n)$ with $p>n-1$.
\begin{enumerate}[a)]
\item Given an open $U \ssubset \Om$ such that $u^* \in W^{1,p}(\partial U, \R^{n})$, we define $\imT(u, U)$, the topological image of $U$ under $u$, as the set of $y\in\R^{n}\setminus u(\partial U)$ such that $\deg (u^*,U,y)\neq 0$. 

\item We define $\imT(u,\Omega)$, the topological image of $\Om$ under $u$, as the union of $\imT(u,U)$ when $U$ runs over all open $U \ssubset \Om$ such that $u^* \in W^{1,p}(\partial U, \R^{n})$.
\end{enumerate}
\end{defn}

Thanks to the continuity of the topological degree for continuous functions we have that $\imT(u,U)$ is an open set, and so is $\imT(u,\Omega)$, as a union of open sets.
Moreover,
\[\imT(u,\Omega)=\bigcup_{i\in\N}\imT(u,U_i)\]
 for every family $\{U_i\}_{i\in\N}$ such that $\Omega=\bigcup_{i\in\N}U_i$, $U_i \ssubset \Om$ and $u^* \in W^{1,p}(\partial U_i, \R^{n})$.

\begin{defn}\label{de:energy}
Let $u \in W^{1,1} (\Omega, \R^{n})$ and $q \geq 1$.
Suppose that $\det D u\in L^{1}\left(\Omega\right)$ and $\cof Du\in L^{q}(\Omega,\R^{n\times n})$.
For $\phi\in W^{1,q'}(\Omega)\cap L^{\infty}(\Omega)$ and $g\in C^{1}_{c}(\R^{n},\R^{n})$ define
\[\E_{\Omega}(u,\phi,g):=\int_{\Omega}\left[\cof Du(x)\cdot (g(u(x))\otimes D\phi(x))+\det D u(x) \, \phi(x)\Div g(u(x))\right] \dd x.\]
\end{defn}

Now we present the class of functions with which we will work in the rest of the chapter.
\begin{defn}
For each $p>n-1$ and $q\ge 1$, we define $\A_{p,q}(\Omega)$ as the set of $u\in W^{1,p}\left(\Omega,\R^{n}\right)$, such that $\det Du\in L^{1} \left(\Omega\right)$, $\cof Du\in L^{q}(\Omega, \Rnn)$, $\det Du>0$ a.e.\ and
\begin{equation}\label{eq:E0}
 \E_{\Omega}(u,\phi,g) = 0, \quad \text{for all } \phi\in C^{1}_c(\Omega) \text{ and } g\in C^{1}_{c}(\R^{n},\R^{n}) .
\end{equation}
We define $\A_{p}(\Omega)=\A_{p,1}(\Omega)$.
We denote by $\A_{p}^{1}(\Omega)$ the set of functions $u\in\A_{p}(\Omega)$ that satisfy $\det Du=1$ a.e.
\end{defn}

If the domain $\Om$ is clear from the context, we will sometimes abbreviate the notation to $\A_{p,q}$, $\A_{p}$ and $\A_{p}^{1}$.

Observe that  $u\in W^{1,p}$ implies $\cof Du\in L^{\frac{p}{n-1}}$, so $\A_{p}(\Omega)=\A_{p,t}(\Omega)$ for $t\in [1,\frac{p}{n-1}]$. 
Moreover, thanks to the result of \cite{MuQiYa94} we have that if $u\in W^{1,p}$ satisfies $\cof Du\in L^{q}$ and $\det Du>0$ a.e.\ with $p>n-1$ and $q\ge \frac{n}{n-1}$ then $u\in \A_{p,q}$.

The following local invertibility result is a particular case of \cite[Cor.\ 4.7]{BaHeMo17}.

\begin{proposition}
Let $u\in\A_{p}(\Omega)$. Then, for a.e.\ $x\in\Omega$ there exists $r>0$ such that $u$ is injective a.e.\ in $B(x,r)$.
\end{proposition}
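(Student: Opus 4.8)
The plan is to deduce this statement from \cite[Cor.\ 4.7]{BaHeMo17}, so that in the paper it suffices to observe that any $u \in \Ap$ fulfils the hypotheses of that corollary; nevertheless, let me sketch the mechanism behind it. The strategy is to select, for a.e.\ $x_0 \in \Om$, a small radius $r>0$ for which the topological degree $\deg(u^*, B(x_0,r), \cdot)$ takes only the values $0$ and $1$, and then to identify this degree with the multiplicity function of $u$ on $B(x_0,r)$ via the area formula, thereby forcing the multiplicity to be at most $1$.

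First I would fix $x_0 \in \Om$ that is simultaneously a Lebesgue point of $Du$, a point of approximate differentiability of $u$ with $\det Du(x_0) > 0$, and a point where $u^*(x_0)$ is defined; almost every $x_0$ has these properties. Then, by standard properties of Sobolev maps restricted to concentric spheres (see \cite{Sverak88,MuSp95}), I would extract a sequence $r_j \downarrow 0$ with $B(x_0, r_j) \ssubset \Om$ and $u^* \in W^{1,p}(\partial B(x_0,r_j), \Rn)$ such that the rescaled boundary traces $\xi \mapsto u^*(x_0 + r_j \xi)$ converge in $W^{1,p}(\partial B(0,1), \Rn)$ to the affine bijection $\xi \mapsto u^*(x_0) + Du(x_0)\,\xi$, which has degree $1$ on $B(0,1)$ since $\det Du(x_0) > 0$. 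Because $p > n-1$, Morrey's embedding on the sphere makes the topological degree stable under $W^{1,p}(\partial B)$-convergence, so for $j$ large enough $\deg(u^*, B(x_0,r_j), \cdot) \in \{0,1\}$ a.e. I would then fix one such radius $r := r_j$ and set $B := B(x_0, r)$.

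Next I would invoke the defining relation \eqref{eq:E0} of the class $\Ap$, localized to $B$, which encodes the absence of cavitation; combined with the machinery of \cite{MuSp95,BaHeMo17} it yields the degree--Jacobian identity $\int_{\Rn} \deg(u^*, B, y)\,\dd y = \int_B \det Du(x)\,\dd x$. On the other hand, the change of variables formula of Proposition \ref{prop:O0}\emph{\ref{item:O0a})} gives $\int_B \det Du(x)\,\dd x = \int_{\Rn} N(u,B,y)\,\dd y$, where $N(u,B,y) := \card\{x \in B \cap \Om_0 : u(x) = y\}$. Since $0 \leq \deg(u^*, B, y) \leq N(u,B,y)$ for a.e.\ $y$, the equality of the two integrals forces $\deg(u^*,B,y) = N(u,B,y)$ a.e., hence $N(u,B,y) \leq 1$ for a.e.\ $y$; by the very definitions of $N$ and $\imG$, this says that $u|_{B \cap \Om_0}$ is injective, i.e.\ that $u$ is injective a.e.\ in $B$, which is the assertion.

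The main obstacle lies in the second and third paragraphs, precisely at the two points where genuine input is needed: the stability of the degree under convergence of boundary traces (which is what $p > n-1$ buys us, and the reason a continuous representative on spheres exists at all) and the degree--Jacobian identity on subdomains (which is exactly what the no-cavitation condition \eqref{eq:E0} buys us, and which fails for a general $u \in W^{1,p}$ with positive Jacobian). Both ingredients, together with the bookkeeping of topological versus geometric images that makes the whole scheme run on a full-measure set of base points $x_0$, are carried out in detail in \cite{BaHeMo17}, which is why the statement is quoted from there rather than reproved.
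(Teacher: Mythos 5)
Your proposal matches the paper exactly: the paper gives no proof of its own and simply records the statement as a particular case of \cite[Cor.\ 4.7]{BaHeMo17}, which is precisely your primary move, and your supplementary sketch (blow-up to an affine map with degree one, stability of the degree for $p>n-1$, the degree--Jacobian identity supplied by \eqref{eq:E0}, and the area formula forcing multiplicity at most one) is a faithful outline of the mechanism carried out in that reference. The only small gloss is at the very end: $N(u,B,y)\le 1$ for a.e.\ $y$ yields injectivity of $u|_{B\cap\Om_0}$ only after removing the preimage of the null set $\{N\ge 2\}$, which is itself null because $\det Du>0$ a.e., giving injectivity a.e.\ as claimed.
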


If $u$ is injective a.e.\ in some $U\ssubset \Om$ then, by Proposition \ref{prop:O0}, $u$ is injective in $U\cap\Omega_0$. 
Therefore $u:U\cap\Omega_0\to \imG(u,U)$ is a bijection.
If, in addition, $u^* \in W^{1,p} (\partial U, \Rn)$ then, thanks to \cite[Th.\ 4.1]{BaHeMo17},
\[ \left| \imT(u,U)\setminus\imG(u,U) \right| = \left| \imG(u,U)\setminus\imT(u,U) \right|=0\]
and, hence, the next definition of local inverse of a function in the class $\A_p$ makes sense.

\begin{defn}\label{de:inv geom}
Let $u\in\A_p(\Omega)$ and $U\ssubset \Om$ be such that $u$ is injective a.e.\ in $U$ and $u^* \in W^{1,p} (\partial U, \Rn)$.
The inverse $(u|_U)^{-1}:\imT(u,U)\to \R^{n}$ is defined a.e.\ as $(u|_{U})^{-1}(y)=x$, for each $y\in \imG(u,U)$, and where $x\in U\cap\Omega_0$ satisfies $u(x)=y$.
\end{defn}
By \cite[Prop.\ 5.3]{BaHeMo17} we have
\[(u|_{U})^{-1}\in W^{1,1}(\imT(u,U),\R^{n})\quad\text{and}\quad D (u|_{U})^{-1}=\left(Du\circ (u|_{U})^{-1}\right)^{-1}\text{ a.e.}\]

\subsection{Cut-and-paste and composition}\label{subse:results}

In this subsection we provide some auxiliary results for functions in $\A_p$.
To be precise, for the recovery sequence of Section \ref{sect:caprelaxrel} a typical construction is to cut and paste functions in $\Ap$, as well as to compose a Lipschitz function with one in $\Ap$.
The main aim of this subsection is to show that, under suitable assumptions, these two operations make a new function still in $\Ap$.

The following lemma shows that when we paste two functions in the class $\A_p$ that coincide in a neighborhood of a sphere, the resulting function is also in $\A_p$.
Note that it is not sufficient that the two functions coincide on the sphere, because a cavity may appear at a point of the sphere, and, hence, the resulting function will not be in $\Ap$ (this phenomenom is known as \emph{cavitation at the boundary}; see \cite{MuSp95,SiSp00,SiSpTi06,Henao09,HeRo17}).

\begin{lem}\label{lem paste Apq}
Let $p > n-1$ and $q \geq 1$.
Let $B, B'$ be open sets such that $B'\ssubset B\ssubset\Omega$.
Assume $u\in\A_{p,q}(\Omega)$, $v\in \A_{p,q}(B)$ and $u=v$ a.e.\ in $B\setminus B'$. Then the function
\[w := \begin{cases}
v & \text{in } B' ,\\
u &\text{in } \Omega\setminus B'
\end{cases}\]
is in $\A_{p,q}(\Omega)$.
If, in addition, $u\in\A_{p}^{1}(\Omega)$ and $v\in\A_{p}^{1}(B)$, then $w\in\A_{p}^{1}(\Omega)$.
\end{lem}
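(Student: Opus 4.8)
The plan is to verify the three defining conditions of $\A_{p,q}(\Omega)$ for $w$: that $w \in W^{1,p}(\Omega,\Rn)$ with $\det Dw \in L^1$, $\cof Dw \in L^q$ and $\det Dw > 0$ a.e.; and that $\E_\Omega(w,\phi,g) = 0$ for all $\phi \in C^1_c(\Omega)$ and $g \in C^1_c(\Rn,\Rn)$. The Sobolev regularity and the pointwise conditions on the minors are the easy part: since $u = v$ a.e.\ in the open set $B \setminus \bar{B'}$ (which is a neighbourhood of $\partial B'$), the function $w$ agrees a.e.\ with $u$ on $\Omega \setminus \overline{B'}$ and with $v$ on $B$, and these two open sets cover $\Omega$; gluing Sobolev functions that agree a.e.\ on the overlap of an open cover gives $w \in W^{1,p}(\Omega,\Rn)$, with $Dw = Du$ a.e.\ on $\Omega \setminus B'$ and $Dw = Dv$ a.e.\ on $B'$. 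Hence $\det Dw$ and $\cof Dw$ inherit the integrability and positivity from $u$ and $v$ respectively. First I would record this, noting carefully that the hypothesis $B' \ssubset B$ is what makes $B \setminus \bar{B'}$ a genuine neighbourhood of the sphere $\partial B'$, so that no cavitation at $\partial B'$ can occur.

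The substantive point is the distributional identity \eqref{eq:E0} for $w$. Fix $\phi \in C^1_c(\Omega)$ and $g \in C^1_c(\Rn,\Rn)$. The idea is to split the integral defining $\E_\Omega(w,\phi,g)$ over $B'$ and over $\Omega \setminus B'$ and relate each piece to an $\E$-functional of $u$ or $v$ against a modified test function. Concretely, choose a cut-off $\eta \in C^\infty_c(B)$ with $\eta = 1$ on a neighbourhood of $\bar{B'}$ and $\operatorname{supp}\eta \subset B$; then $\eta\phi \in C^1_c(B)$ and $(1-\eta)\phi \in C^1_c(\Omega)$. Applying the hypothesis $\E_B(v,\eta\phi,g) = 0$ (valid since $v \in \A_{p,q}(B)$) and $\E_\Omega(u,(1-\eta)\phi,g) = 0$ (valid since $u \in \A_{p,q}(\Omega)$), and adding, one gets
\[
 \E_B(v,\eta\phi,g) + \E_\Omega(u,(1-\eta)\phi,g) = 0 .
\]
Now I would rewrite the left-hand side as an integral over $\Omega$: on the region where $\eta = 1$ (which contains $\bar{B'}$) the integrand involving $v$ is exactly the $\E$-integrand for $w$ with test function $\phi$; on the region where $\operatorname{supp}\eta$ and the support of $1-\eta$ overlap — which is contained in $B \setminus \bar{B'}$ where $u = v$ a.e.\ and hence $Dw = Du = Dv$, $\cof Dw = \cof Du = \cof Dv$, $\det Dw = \det Du = \det Dv$ a.e. — the two integrands combine, using $\eta\phi + (1-\eta)\phi = \phi$ and $D(\eta\phi) + D((1-\eta)\phi) = D\phi$, to give precisely the $\E$-integrand for $w$ with test function $\phi$; and on $\Omega \setminus \operatorname{supp}\eta$ only the $u$-term survives, again equal to the $w$-integrand. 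Summing these contributions yields $\E_\Omega(w,\phi,g) = 0$, which is the desired identity.

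The main obstacle is the bookkeeping in that last step: one must be careful that on the overlap region the equalities $Dw = Du = Dv$ and the corresponding equalities for minors hold \emph{a.e.}, since $u = v$ only a.e.\ in $B \setminus B'$, and that the linearity in the test-function slot of the $\E$-functional is being used legitimately (it is linear in $(\phi, D\phi)$ jointly for fixed $u$, which is immediate from Definition \ref{de:energy}). The role of $B' \ssubset B$, rather than merely $B' \subset B$, is precisely to guarantee the overlap region $B \setminus \bar{B'}$ is open and nonempty around $\partial B'$, so that such a cut-off $\eta$ exists. Finally, for the incompressible addendum: if $u \in \A_p^1(\Omega)$ and $v \in \A_p^1(B)$ then $\det Du = 1$ a.e.\ and $\det Dv = 1$ a.e., so $\det Dw = 1$ a.e.\ on $\Omega$; combined with $w \in \A_p(\Omega)$, which is the case $q = 1$ already proved, this gives $w \in \A_p^1(\Omega)$.
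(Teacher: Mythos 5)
Your proposal is correct and follows essentially the same route as the paper's proof: both reduce everything to the identity $\E_{\Om}(w,\phi,g)=\E_{B}(v,\eta\phi,g)+\E_{\Om}(u,(1-\eta)\phi,g)=0$ via a cut-off $\eta$ supported in $B$ and equal to $1$ near $\bar{B}'$, using linearity of $\E$ in $(\phi,D\phi)$ and the a.e.\ agreement of $u$, $v$, $w$ on the overlap region. Your explicit care in taking $\eta=1$ on a neighbourhood of $\bar{B}'$ and in the Sobolev gluing is consistent with (indeed slightly more detailed than) the paper's one-line treatment of those points.
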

\begin{proof}
All the conditions in the definition of $\A_{p,q}$ are immediate to check except \eqref{eq:E0}, so let $\phi\in C^{1}_{c}(\Omega)$ and $g\in C^{1}_{c}(\R^{n},\R^{n})$.
Fix an $\eta \in C^1_c (\Om)$ with support in $B$ such that $\eta = 1$ in $B'$.
Then, as $\eta \, \phi \in C^1_c (B)$,
\begin{align*}
 \E_{\Om} (w,\phi,g) & = \E_{\Om} (w,\eta \, \phi,g) + \E_{\Om} (w,(1-\eta) \, \phi,g) = \E_{B} (w,\eta \, \phi,g) + \E_{\Om \setminus \bar{B}'} (w,(1-\eta) \, \phi,g) \\
 & = \E_{B} (v,\eta \, \phi,g) + \E_{\Om \setminus \bar{B}'} (u,(1-\eta) \, \phi,g) = 0 + \E_{\Om} (u,(1-\eta) \, \phi,g) = 0 + 0 = 0 .
\end{align*}
This concludes the proof also in the case $u\in\A_{p}^{1}(\Omega)$ and $v\in\A_{p}^{1}(B)$.
\end{proof}

In the next lemma we see that $\E_{\Omega}(u,\phi,g)$ is also zero for $u\in \A_{p,q}$ and $\phi$ in the correct Sobolev space.

\begin{lem}\label{lem energy for sobolev}
Let $p>n-1$ and $q> 1$.
Let $u\in \A_{p,q}(\Omega)$, $g\in C^{1}_{c}(\R^{n},\R^{n})$ and $\phi\in W^{1,q'}_0 (\Omega)\cap L^{\infty}(\Omega)$. Then $\E_{\Omega}(u,\phi,g)=0$.
\end{lem}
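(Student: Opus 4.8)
The plan is to approximate $\phi$ by functions in $C^1_c(\Omega)$ in the $W^{1,q'}$ topology and pass to the limit in $\E_\Omega(u,\phi_k,g)$, using that each term defining $\E_\Omega$ is continuous under such approximation. First I would fix a sequence $\{\phi_k\}_{k\in\N}\subset C^1_c(\Omega)$ with $\phi_k\to\phi$ in $W^{1,q'}(\Omega)$; since $\phi\in W^{1,q'}_0(\Omega)$ such a sequence exists by definition of $W^{1,q'}_0$. The subtlety is that we also need to control the $L^\infty$ norm, because the term $\det Du\,\phi\,\Div g(u)$ only has $\det Du\in L^1$ available, so we cannot pair it against an $L^{q'}$ function directly. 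To handle this, I would truncate: replace $\phi_k$ by $T_m\circ\phi_k$ where $T_m$ is a smooth truncation at level $m:=\|\phi\|_{L^\infty}+1$, or more simply note that the standard mollification/cutoff construction of the approximating sequence can be arranged so that $\|\phi_k\|_{L^\infty}\le\|\phi\|_{L^\infty}$ (mollification does not increase the sup norm, and multiplication by cutoffs $\le 1$ does not either). So without loss of generality $\phi_k\to\phi$ in $W^{1,q'}(\Omega)$, $\phi_k\to\phi$ a.e.\ (along a subsequence), and $\sup_k\|\phi_k\|_{L^\infty}\le m<\infty$.

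Next I would pass to the limit term by term in
\[
 \E_\Omega(u,\phi_k,g)=\int_\Omega \cof Du(x)\cdot\big(g(u(x))\otimes D\phi_k(x)\big)\,\dd x+\int_\Omega \det Du(x)\,\phi_k(x)\,\Div g(u(x))\,\dd x,
\]
which is $0$ for every $k$ by \eqref{eq:E0} since $\phi_k\in C^1_c(\Omega)$. For the first integral: $\cof Du\in L^q(\Omega,\Rnn)$ and $g$ is bounded (as $g\in C^1_c$), so $\cof Du\,(g\circ u)\otimes(\cdot)\in L^q$ pairs continuously with $D\phi_k\to D\phi$ in $L^{q'}$; hence the first integral converges to $\int_\Omega\cof Du\cdot(g(u)\otimes D\phi)\,\dd x$. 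For the second integral: $\det Du\,\Div g(u)\in L^1(\Omega)$ (as $\det Du\in L^1$ and $\Div g$ is bounded), $\phi_k\to\phi$ a.e.\ with $\sup_k\|\phi_k\|_{L^\infty}\le m$, so by dominated convergence (dominating function $m\,|\det Du|\,\|\Div g\|_{L^\infty}\in L^1$) the second integral converges to $\int_\Omega\det Du\,\phi\,\Div g(u)\,\dd x$. Therefore $0=\E_\Omega(u,\phi_k,g)\to\E_\Omega(u,\phi,g)$, which gives $\E_\Omega(u,\phi,g)=0$.

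The main obstacle is the bookkeeping around the $L^\infty$ bound on the approximating sequence: the second term of $\E_\Omega$ is only integrable because of the $L^\infty$ constraint on $\phi$, so one must be careful that the chosen approximation $\phi_k$ does not blow up in sup norm. This is routine — either one cites that the canonical construction of an approximating sequence for $W^{1,q'}_0\cap L^\infty$ preserves the sup bound, or one inserts an explicit truncation $T_m\circ\phi_k$ and checks that $T_m\circ\phi_k\to\phi$ still holds in $W^{1,q'}$ (using that $T_m$ is $1$-Lipschitz and equals the identity on the essential range of $\phi$). Everything else is a direct application of continuity of the pairing $L^q\times L^{q'}\to\R$ and the dominated convergence theorem; the hypothesis $q>1$ is used precisely so that $q'<\infty$ and $\cof Du\in L^q$ suffices for the first term.
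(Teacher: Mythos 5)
Your proposal is correct and follows essentially the same route as the paper: approximate $\phi$ by $C^1_c$ functions converging in $W^{1,q'}$ while keeping a uniform $L^\infty$ bound via truncation, then pass to the limit using $L^q$--$L^{q'}$ duality for the cofactor term and the uniform sup bound plus a.e.\ convergence for the determinant term (the paper phrases the latter as weak$^*$ convergence in $L^\infty$, which is equivalent here to your dominated-convergence step).
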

\begin{proof}
Let $\{ \phi_j \}_{j \in \N}$ be a sequence in $C^{1}_{c}(\Omega)$ such that $\phi_j \to \phi$ in $W^{1,q'}(\Omega)$ and $\phi_j \weakcs \phi$ in $L^{\infty} (\Om)$ as $j\to\infty$.
This sequence can be constructed as follows: first one takes a sequence $\{ \tilde{\phi}_j \}_{j \in \N}$ in $C^{1}_{c}(\Omega)$ such that $\tilde{\phi}_j \to \phi$ in $W^{1,q'}(\Omega)$ and a.e., and $D \tilde{\phi}_j \to D \phi$ a.e\@.
Then, one defines $\bar{\phi}_j = \max \{ \tilde{\phi}_j, \left\| \phi \right\|_{L^{\infty}} +1 \}$.
It is easy to check that $\bar{\phi}_j \to \phi$ in $W^{1,q'}(\Omega)$ and $\bar{\phi}_j \weakcs \phi$ in $L^{\infty} (\Om)$.
Then, one takes $\phi_j$ as a suitable mollification of $\bar{\phi}_j$.
When such $\phi_j$ have been constructed, we have $\E_{\Omega}(u,\phi_j,g)=0$ for all $j \in \N$, and
\begin{align*}
 & \lim_{j\to\infty} \int_{\Omega}\cof Du(x)\cdot (g(u(x))\otimes D\phi_j(x)) + \det D u(x) \, \phi_j(x)\Div g(u(x))\, \dd x \\
 & = \int_{\Omega}\cof Du(x)\cdot (g(u(x))\otimes D\phi(x)) + \det D u(x) \, \phi(x) \Div g(u(x)) \, \dd x ,
\end{align*}
so $\E_{\Omega}(u,\phi,g) = 0$.
\end{proof}

We prove that the composition of a function in the class $\A_{p,q}$ with a Lipschitz function satisfying some conditions is still in the class $\A_{p,q}$.
The assumptions may look artificial, but we will see in Section \ref{sect:caprelaxrel} that they will all be satisfied.

\begin{lem}\label{lem inner comp by lip}
Let $p>n-1$ and $q> 1$.
Let $u\in\A_{p,q}(\Omega)$, $B\ssubset\Omega$ a ball, $\rho: B\to\bar{B}$ Lipschitz such that $\rho|_{\partial B}=\id|_{\partial B}$, $\det D\rho>0$ a.e.\ and $\int_{\Omega}\left(\det D\rho\right)^{1-q'}\, \dd x<\infty$.
Define
\[z :=\begin{cases}
u \circ \rho & \text{in } B,\\
u &\text{in } \Omega\setminus B.
\end{cases}\]
Assume that $z\in W^{1,p}(\Omega, \R^n)$, $Dz= (Du \circ \rho) \, D\rho$ in $B$, $\det Dz\in L^{1} (\Omega)$ and $\cof Dz\in L^{q}(\Omega, \Rnn)$.
Then $z\in \A_{p,q}(\Omega)$.
If, in addition, $u\in\A_{p}^{1}(\Omega)$ and $\det D\rho=1$ a.e., then $z\in\A_{p}^{1}(\Omega)$.
\end{lem}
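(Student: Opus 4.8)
The plan is to reduce the statement to verifying the single identity \eqref{eq:E0} for $z$, since all the other conditions defining $\A_{p,q}(\Omega)$ — namely $z \in W^{1,p}$, $\det Dz \in L^1$, $\cof Dz \in L^q$ and $\det Dz > 0$ a.e.\ — are either hypotheses of the lemma or follow at once (positivity of $\det Dz$ follows from $\det Dz = (\det Du \circ \rho)\det D\rho > 0$ a.e., using $Dz = (Du\circ\rho)D\rho$ in $B$ and $\det D\rho > 0$ a.e., together with $\det Du > 0$ a.e.\ and the fact that $\rho$ maps null sets to null sets, which holds because $\det D\rho \in L^1$ makes $\rho$ satisfy Lusin's condition (N)). So the heart of the proof is to fix $\phi \in C^1_c(\Omega)$ and $g \in C^1_c(\R^n,\R^n)$ and show $\E_\Omega(z,\phi,g) = 0$.

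The key idea is a change of variables in the ball $B$. First I would split $\E_\Omega(z,\phi,g) = \E_{\Omega\setminus\bar B}(z,\phi,g) + \E_{B}(z,\phi,g)$ using a cutoff $\eta \in C^1_c(\Omega)$ supported in a slightly larger ball with $\eta = 1$ on $B$, exactly as in the proof of Lemma \ref{lem paste Apq}; on $\Omega\setminus\bar B$ we have $z = u$ so that contribution combines with part of the $B$-term via the identity $\E_\Omega(u,\cdot,g)$, and it remains to handle $\E_B(u\circ\rho,\psi,g)$ for $\psi = \eta\phi \in C^1_c(B)$. Here one computes, using $Dz = (Du\circ\rho)D\rho$ and the standard algebraic identities $\cof(AB) = (\cof A)(\cof B)$ and $\det(AB) = \det A\det B$, that the integrand of $\E_B(z,\psi,g)$ equals a pullback under $\rho$ of the integrand of $\E_{\rho(B)}(u,\tilde\psi,g)$ for an appropriate test function; the change-of-variables formula for $\rho$ (valid because $\rho$ is Lipschitz with $\det D\rho > 0$ a.e.) then transfers the integral. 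The technical point is that $\tilde\psi$ is morally $\psi\circ\rho^{-1}$, which need not be smooth or even well-defined if $\rho$ is not injective, so one must be careful: the clean way is to verify directly that $\cof Dz \cdot (g(z)\otimes D\psi) + \det Dz\,\psi\,\Div g(z)$ equals $\Div\!\big[(\cof Dz)^T\,\psi\,g(z)\big]$ in the distributional sense on $B$ — this is precisely the statement that $\E_B(z,\psi,g)$ vanishes for all such $\psi$, i.e.\ that $\Div\big((\cof Dz)^T g(z)\big) = (\det Dz)\Div g(z)$ as distributions — and to obtain this distributional Piola identity for $z$ from the one for $u$ by the chain rule. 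Since $u \in \A_{p,q}$, the distributional identity $\Div\big((\cof Du)^T g(u)\big) = (\det Du)(\Div g)(u)$ holds on $\Omega$ (by Lemma \ref{lem energy for sobolev}, extended to $W^{1,q'}_0 \cap L^\infty$ test functions), hence in particular on $\rho(B)$; pulling back along the Lipschitz map $\rho$ and using $Dz = (Du\circ\rho)D\rho$, $\cof Dz = (\cof Du\circ\rho)\cof D\rho$ gives the same identity for $z$ on $B$, which is what we need.

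I would organize the computation around the observation that the vector field $h := (\cof Dz)^T\,g(z) \in L^1(B,\R^n)$ (integrability from $\cof Dz \in L^q$, $q > 1$, and $g$ bounded) satisfies $\Div h = (\det Dz)\,\Div g(z)$ in $\mathcal D'(B)$, and that this is exactly equivalent to $\E_B(z,\psi,g) = 0$ for all $\psi \in C^1_c(B)$ after an integration by parts. To prove this distributional identity, take $\psi \in C^\infty_c(B)$, write $\int_B h\cdot D\psi = \int_B (\cof Du\circ\rho)^T(\cof D\rho)^T g(u\circ\rho)\cdot D\psi$, and recognize $(\cof D\rho)^T D\psi$ together with the change of variables $y = \rho(x)$: since $(\cof D\rho)D\rho^{-T}$-type manipulations are delicate when $\rho$ is merely Lipschitz, the robust route is to approximate — mollify $\rho$ to smooth orientation-preserving maps, or rather approximate $u$ by smooth maps $u_k$ with $\det Du_k > 0$ for which the Piola identity is classical and the chain rule $D(u_k\circ\rho) = (Du_k\circ\rho)D\rho$ is elementary, derive $\E_B(u_k\circ\rho,\psi,g) = 0$ by the classical change of variables, and then pass to the limit using the convergences $Du_k\circ\rho \to Du\circ\rho$, $\cof Du_k\circ\rho \to \cof Du\circ\rho$, $\det Du_k\circ\rho \to \det Du\circ\rho$ in the appropriate $L^1_{loc}$ sense guaranteed by the hypothesis $\int_\Omega(\det D\rho)^{1-q'}\,\dd x < \infty$ (this is precisely the weight that makes the composition operator $v \mapsto v\circ\rho$ continuous from the relevant Lebesgue spaces, via Hölder with exponents $q$ and $q'$ against the Jacobian weight in the change of variables $\int_B |f\circ\rho|\,\det D\rho = \int_{\rho(B)} |f|\,\#\rho^{-1} \geq \int f$, combined with the multiplicity bound). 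I expect this limiting argument — ensuring the chain-rule identity $Dz = (Du\circ\rho)D\rho$ and the Piola identity survive the approximation under the integrability hypothesis on $(\det D\rho)^{1-q'}$ — to be the main obstacle; everything else is bookkeeping with cutoffs and the already-established Lemmas \ref{lem paste Apq} and \ref{lem energy for sobolev}. The incompressible case $\det D\rho = 1$, $u \in \A^1_p$ requires no new argument: $\det Dz = (\det Du\circ\rho)\cdot 1 = 1$ a.e., so $z \in \A^1_p(\Omega)$ once $z \in \A_p(\Omega)$ is known.
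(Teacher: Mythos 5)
Your reduction to the identity $\E_\Omega(z,\phi,g)=0$ and your recognition that the proof is a change of variables in $B$ transferring the test function to (morally) $\phi\circ\rho^{-1}$ are both on target, and your treatment of the incompressible case is correct. But at the crucial step you talk yourself out of the direct route for the wrong reason, and the substitute you propose has a genuine gap. Your worry that ``$\rho$ need not be injective'' is unfounded: a Lipschitz map $\rho:B\to\bar B$ with $\rho|_{\partial B}=\id|_{\partial B}$ and $\det D\rho>0$ a.e.\ \emph{is} injective a.e., with inverse $\rho^{-1}\in W^{1,1}(B,\R^n)$ and $D\rho^{-1}=(D\rho)^{-1}\circ\rho^{-1}$ --- this is \v{S}ver\'ak's theorem \cite[Th.\ 8]{Sverak88} (or \cite[Th.\ 3.3]{HeMo15}), and it is precisely the tool the paper uses. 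With it, the change of variables $y=\rho(x)$ gives $\E_B(u\circ\rho,\phi,g)=\E_B(u,\phi\circ\rho^{-1},g)$ directly, and the only remaining point is to check that $\phi\circ\rho^{-1}$ (glued with $\phi$ on $\Omega\setminus B$, which matches across $\partial B$ because $\rho|_{\partial B}=\id$) lies in $W^{1,q'}_0(\Omega)\cap L^\infty$ so that Lemma \ref{lem energy for sobolev} applies. This is where the hypothesis $\int(\det D\rho)^{1-q'}\,\dd x<\infty$ enters: $\int_B|D\rho^{-1}|^{q'}\,\dd y=\int_B|\cof D\rho|^{q'}(\det D\rho)^{1-q'}\,\dd x\lesssim\int_B(\det D\rho)^{1-q'}\,\dd x$, using that $\rho$ is Lipschitz. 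Your attribution of that hypothesis to ``continuity of the composition operator'' misses its actual role.

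The fallback you propose instead --- approximate $u$ by smooth orientation-preserving maps $u_k$, derive the Piola-type identity for $u_k\circ\rho$ classically, and pass to the limit --- does not close. There is no available density result producing smooth $u_k$ with $\det Du_k>0$ such that $Du_k\to Du$ in $L^p$, $\cof Du_k\to\cof Du$ in $L^q$ and $\det Du_k\to\det Du$ in $L^1$ for a general $u\in\A_{p,q}$ with $p\in(n-1,n)$; even for $p\ge n$ this kind of approximation preserving orientation and minors is a well-known open difficulty, and mollification destroys the sign of the determinant. You yourself flag the limiting argument as ``the main obstacle'' without resolving it, and it is exactly the step that cannot be supplied. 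The fix is simply to drop the approximation scheme and use the invertibility of $\rho$ as above.
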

\begin{proof}
By definition of $\A_{p,q}$, to prove $z\in\A_{p,q}(\Omega)$ we only have to show that $\E_{\Omega}(z,\phi,g)=0$ for all $\phi\in C^{1}_{c}(\Omega)$ and $g\in C^{1}_{c}(\R^{n},\R^{n})$.
We have
\begin{equation*}
\E_{\Omega}(z,\phi,g)=\E_{\Omega\setminus \bar{B}}(u,\phi,g)+\E_{B}(u\circ \rho,\phi,g).
\end{equation*}
By \cite[Th.\ 8]{Sverak88} or \cite[Th.\ 3.3]{HeMo15}, we have $\rho^{-1}\in W^{1,1}(B, \R^n)$ and
\[
 D\rho^{-1} (y) = D \rho (\rho^{-1} (y))^{-1}, \ \ \det D\rho (\rho^{-1}(y))=\frac{1}{\det D\rho^{-1}(y)}, \ \ \cof D\rho(\rho^{-1}(y))=\frac{D\rho^{-1}(y)^{T}}{\det D\rho^{-1}(y)}
\]
for a.e.\ $y \in B$, so, by a change of variables,
\begin{equation}\label{eq:Eurho}
\begin{split}
\E_{B}(u\circ \rho,\phi,g)=&\int_{B}\left[\cof (Du(y)) \, D\rho^{-1}(y)^{T}\cdot (g(u(y))\otimes D\phi(\rho^{-1}(y)))\right.\\
&\left.+\det (D u(y)) \, \phi(\rho^{-1}(y))\Div g(u(y))\right] \dd y.
\end{split}
\end{equation}
By the chain rule (see, e.g., \cite[Th.\ 2.1.11]{Ziemer89}) we get that $\phi \circ \rho^{-1} \in W^{1,1}(B)$ and $D(\phi \circ \rho^{-1}) = (D\phi \circ \rho^{-1}) \, D\rho^{-1}$ in $B$.
In fact, $\phi \circ \rho^{-1} \in W^{1,q'}(B)$ since, changing variables and using the fact that $\rho$ and $\phi$ are Lipschitz, we get
\begin{align*}
 \left\| D (\phi \circ \rho^{-1}) \right\|_{L^{q'}(B)}^{q'} & \lesssim \left\|D \rho^{-1} \right\|_{L^{q'}(B)}^{q'} =\int_{B} \left| D\rho^{-1}(y) \right|^{q'}\, \dd y = \int_{B} \left| D\rho^{-1}(\rho(x)) \right|^{q'} \det D\rho (x) \, \dd x \\
&=\int_{B} \left| \cof D\rho(x) \right|^{q'} \det D\rho (x)^{1-q'} \, \dd x \lesssim \int_{B} \det D\rho (x)^{1-q'} \, \dd x<\infty .
\end{align*}
Equality $\E_{B} (u\circ \rho,\phi,g) = \E_{B} (u,\phi \circ \rho^{-1},g)$ is clear in view of \eqref{eq:Eurho}.
Define
\[
 \tilde{\phi} := \begin{cases}
 \phi & \text{in } \Om \setminus B , \\
 \phi \circ \rho^{-1} & \text{in } B .
 \end{cases}
\]
As $\rho|_{\partial B}=\id|_{\partial B}$, we have that $\tilde{\phi}$ is Sobolev; in fact, $\tilde{\phi} \in W^{1,q'}_0 (\Om)$.
Thanks to Lemma \ref{lem energy for sobolev} we have $\E_{\Omega}(u,\tilde{\phi},g)=0$, so
\[
 0 = \E_{\Omega} (u,\tilde{\phi},g) = \E_{\Omega \setminus \bar{B}} (u, \phi, g) + \E_{B} (u,\phi \circ \rho^{-1},g) = \E_{\Omega\setminus \bar{B}}(u,\phi,g)+\E_{B}(u\circ \rho,\phi,g) = \E_{\Omega}(z,\phi,g)
\]
and, hence, $z\in\A_{p,q}(\Omega)$.

If, in addition, $u\in\A_{p}^{1}(\Omega)$ and $\det D\rho=1$ a.e.\ then $\det Dz(x)=\det Du(\rho(x))\det D\rho(x)=1$ for a.e.\ $x\in B$ and $\det Dz(x)=\det Du(x)=1$ for a.e.\ $x\in \Omega\setminus B$. Therefore, $z\in\A_{p}^{1}(\Omega)$.
\end{proof}

\section{Compactness, lower semicontinuity and existence}\label{se:existence}

In this section we prove existence of minimizers of $I$ under the assumptions that $W$ is polyconvex in the first variable and $V$ is tangentially quasiconvex.

We first define the set of admissible functions.
We will distinguish two cases, according to whether the material is compressible (admissible set $\B$ and energy functional $I$) or incompressible (admissible set $\B_1$ and energy functional $I_1$).
The energy functional is, in principle, defined in the whole $L^1 (\Om, \Rn) \times L^1 (\Rn, \Rn)$ but it will be infinity outside the set of admissible functions.

Fix $p>n-1$ and $s > 1$.
Let $\Om \subset \Rn$ be a bounded Lipschitz domain.
Let $\Gamma$ be an $(n-1)$-rectifiable subset of $\partial \Om$, and let $u_0:\Gamma\to \R^{n}$.
We define $\B$ as the set of $(u,\vn) \in L^1 (\Om, \Rn) \times L^1 (\Rn, \Rn)$ such that $u\in \A_p (\Om)$, $u|_{\Gamma}=u_0$ in the sense of traces, $D u (x) \in \Rnn_+$ for a.e.\ $x \in \Om$,
\[
 \vn|_{\imT(u,\Omega)} \in W^{1,s} (\imT (u, \Om) ,\mS^{n-1}) \quad \text{and} \quad \vn|_{\R^n \setminus \imT(u,\Omega)} = 0 .
\]
Note that no boundary conditions are prescribed for $\vn$.
As for the incompressible case, we define $\B_1$ as the set of $(u,\vn) \in \B$ such that $D u (x) \in SL(n)$ for a.e.\ $x \in \Om$.

We define the energy functionals
\begin{equation}\label{eq:Idefinitions}
 I, \, I_{\mec}, \, I_{\nem}, \, I_1, \, I_{1,\mec}, \, I_{1,\nem} : L^1 (\Om, \Rn) \times L^1 (\Rn, \Rn) \to [0,\infty]
\end{equation}
describing the nematic elastomer as follows:
\[
 I_{\mec} (u, \vn) = \begin{cases}
 \displaystyle \int_{\Omega} W(Du(x),\vn(u(x)))\, \dd x , & \text{if } (u, \vn) \in \B , \\
 \infty , & \text{otherwise,}
 \end{cases}
\]
\[
 I_{\nem} (u, \vn) = \begin{cases}
 \displaystyle \int_{\imT(u,\Omega)} V(\vn(y), D\vn(y)) \, \dd y  , & \text{if } (u, \vn) \in \B , \\
 \infty , & \text{otherwise,}
 \end{cases}
\]
\[
 I_{1,\mec} (u, \vn) = \begin{cases}
 I_{\mec} (u, \vn) , & \text{if } (u, \vn) \in \B_1 , \\
 \infty , & \text{otherwise,}
 \end{cases}
 \qquad 
 I_{1,\nem} (u, \vn) = \begin{cases}
 I_{\nem} (u, \vn) , & \text{if } (u, \vn) \in \B_1 , \\
 \infty , & \text{otherwise.}
 \end{cases}
\]
Finally, $I:= I_{\mec} + I_{\nem}$ and $I_1 : = I_{1,\mec} + I_{1,\nem}$.

The following result establishes the lower semicontinuity of $I$ in $\B$ with respect to the $L^1$ topology.
Its proof is essentially a rewriting of the proofs of \cite[Props.\ 7.1, 7.8 and Th.\ 8.2]{BaHeMo17}, and will only be sketched.

\begin{prop}\label{prop:lowersemicont}
Let $s > 1$ and $p>n-1$.
Let
\begin{equation}\label{eq:ujnjun}
 (u_j, \vn_j) \to (u, \vn) \text{ in } L^1 (\Om, \R^n) \times L^1 (\Rn, \Rn) \qquad \text{as } j\to\infty .
\end{equation}
Let $W:\R^{n\times n}_{+}\times \mS^{n-1}\to [0,\infty)$ be continuous, polyconvex and such that
\begin{equation}\label{eq:coercWtilde}
 W (F, \vn) \geq c \, |F|^p + \theta (\det F) , \qquad F \in \Rnn_+ , \quad \vn \in \mS^{n-1}
\end{equation}
for a constant $c>0$ and a Borel function $\theta : (0, \infty) \to [0, \infty)$ with
\begin{equation}\label{eq:h1}
 \lim_{t \searrow 0} \theta (t) = \lim_{t \to \infty} \frac{\theta (t)}{t} = \infty.
\end{equation}
Let $V : T^n \mS^{n-1} \to [0, \infty)$ be continuous and tangentially quasiconvex such that
\begin{equation}\label{eq:Vsgrowth}
 c \left| \zeta \right|^s - \frac{1}{c} \leq V (z, \zeta) \leq \frac{1}{c} \left( 1 + \left| \zeta \right|^s \right) ,Ê\qquad (z, \zeta) \in T^n \mS^{n-1} .
\end{equation}
Then
\begin{equation}\label{eq:Isemicont}
 I (u, \vn) \leq \liminf_{j \to \infty} I (u_j, \vn_j) .
\end{equation}
\end{prop}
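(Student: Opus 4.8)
The plan is to follow the scheme of \cite{BaHeMo17}, reducing to a finite liminf, extracting good subsequences, and then treating the mechanical and nematic terms separately. If $\ell := \liminf_{j\to\infty} I(u_j,\vn_j) = \infty$ there is nothing to prove, so I would assume $\ell<\infty$ and pass to a (non-relabelled) subsequence along which $I(u_j,\vn_j)\to\ell$ and, by a further extraction, both $I_{\mec}(u_j,\vn_j)$ and $I_{\nem}(u_j,\vn_j)$ converge, necessarily with finite limits summing to $\ell$; in particular $(u_j,\vn_j)\in\B$ for all $j$. It then suffices to prove $I_{\mec}(u,\vn)\le\lim_j I_{\mec}(u_j,\vn_j)$ and $I_{\nem}(u,\vn)\le\lim_j I_{\nem}(u_j,\vn_j)$. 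The coercivity bound \eqref{eq:coercWtilde} controls $\|Du_j\|_{L^p(\Om)}$, so, together with $u_j\to u$ in $L^1$ and the boundary datum $u_j|_\Gamma=u_0$, we get $u_j\weakc u$ in $W^{1,p}(\Om,\Rn)$; the bound on $\int_\Om\theta(\det Du_j)$ and \eqref{eq:h1} give equi-integrability of $\{\det Du_j\}_j$. Using the weak continuity of the minors in the class $\Ap$ (valid because $p>n-1$) and the closedness of $\Ap(\Om)$ under this type of convergence proved in \cite{BaHeMo17}, I would conclude $u\in\Ap(\Om)$, $Du(x)\in\Rnn_+$ a.e., $u|_\Gamma=u_0$, and $M(Du_j)\weakc M(Du)$ in $L^1(\Om,\R^{\tau})$ (the lower-order minors are automatically equi-integrable, being bounded in $L^{p/(n-1)}$ with $p/(n-1)>1$, while equi-integrability of the determinant was just noted).

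For the mechanical term the essential point — and the first real obstacle — is the convergence of the compositions $\vn_j\circ u_j$. Here the local invertibility of maps in $\Ap$ and the convergence of the topological images are used: arguing as in \cite[Prop.~7.1]{BaHeMo17}, after a further subsequence one has $u_j\to u$ a.e.\ in $\Om$, $\mathbf{1}_{\imT(u_j,\Om)}\to\mathbf{1}_{\imT(u,\Om)}$ a.e.\ in $\Rn$, and, pulling $\vn_j\to\vn$ (in $L^1(\Rn,\Rn)$) back through the local inverses and the change of variables, $\vn_j(u_j(\cdot))\to\vn(u(\cdot))$ in measure on $\Om$. Writing the polyconvex density as $W(F,z)=\Phi(z,M(F))$ with $\Phi$ convex in the last variable and a normal integrand jointly, as in \cite{BaHeMo17}, I would then apply the classical lower semicontinuity theorem for integrands convex in a weakly-$L^1$-convergent variable and continuous in an in-measure-convergent parameter (of Ioffe type, cf.\ \cite{Dacorogna08}) to obtain
\[
 I_{\mec}(u,\vn)=\int_\Om\Phi(\vn(u(x)),M(Du(x)))\,\dd x\le\liminf_{j\to\infty}\int_\Om\Phi(\vn_j(u_j(x)),M(Du_j(x)))\,\dd x=\lim_{j\to\infty}I_{\mec}(u_j,\vn_j).
\]

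For the nematic term I would exhaust $\imT(u,\Om)$ by open sets $\Om'\ssubset\imT(u,\Om)$. Fixing such $\Om'$, the convergence of the topological images gives $\Om'\subset\imT(u_j,\Om)$ for all large $j$, whence $I_{\nem}(u_j,\vn_j)\ge\int_{\Om'}V(\vn_j,D\vn_j)\,\dd y$. The lower bound in \eqref{eq:Vsgrowth} together with the uniform bound $|\imT(u_j,\Om)|=\int_\Om\det Du_j\,\dd x\le C$ produces a uniform bound on $\|D\vn_j\|_{L^s(\Om')}$; since $|\vn_j|\equiv1$ and $\vn_j\to\vn$ in $L^1(\Om')$, this yields $\vn_j\weakc\vn$ in $W^{1,s}(\Om',\mS^{n-1})$. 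Theorem~\ref{th:tqc}(a) (applicable because $V$ is tangentially quasiconvex and has $s$-growth by \eqref{eq:Vsgrowth}) then gives
\[
 \int_{\Om'}V(\vn(y),D\vn(y))\,\dd y\le\liminf_{j\to\infty}\int_{\Om'}V(\vn_j(y),D\vn_j(y))\,\dd y\le\lim_{j\to\infty}I_{\nem}(u_j,\vn_j).
\]
Letting $\Om'\nearrow\imT(u,\Om)$ and invoking monotone convergence gives $I_{\nem}(u,\vn)\le\lim_j I_{\nem}(u_j,\vn_j)$; along the way one also reads off $D\vn\in L^s(\imT(u,\Om))$ and $\vn=0$ outside $\imT(u,\Om)$, so indeed $(u,\vn)\in\B$. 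Adding the two inequalities yields \eqref{eq:Isemicont}. I expect the hard part to be, as in \cite{BaHeMo17}, the in-measure convergence of $\vn_j\circ u_j$ and the control of the moving domains $\imT(u_j,\Om)$; both rely on the local-invertibility and image-convergence machinery recalled in Section~\ref{sect:caprelaxenergy}, which is why the proof amounts to a rewriting of the arguments in \cite{BaHeMo17}.
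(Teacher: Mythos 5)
Your proposal is correct and follows essentially the same route as the paper's (sketched) proof: reduce to a finite limit along a subsequence, invoke the compactness/weak-continuity and polyconvex lower semicontinuity machinery of \cite{BaHeMo17} for $I_{\mec}$ (including the in-measure convergence of $\vn_j\circ u_j$), and handle $I_{\nem}$ by exhausting $\imT(u,\Om)$ with open sets $G\ssubset\imT(u,\Om)$ contained in $\imT(u_j,\Om)$ for large $j$, applying Theorem \ref{th:tqc}(a) and passing to the limit. The only cosmetic difference is your extra subsequence extraction making $I_{\mec}$ and $I_{\nem}$ converge separately, which the paper avoids by using superadditivity of the $\liminf$.
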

\begin{proof}
By taking a subsequence, we can assume that the $\liminf$ of the right-hand side of \eqref{eq:Isemicont} is a limit, and that, in fact, it is finite.
The proof of \cite[Th.\ 8.2]{BaHeMo17} shows that
\begin{align*}
 & u_j \weakc u \text{ in } W^{1,p} (\Om, \Rn) , \qquad \det D u_j \weakc \det D u \text{ in } L^1 (\Om) , \\  & \chi_{\imT (u_j, \Om)} D \vn_j \weakc \chi_{\imT (u, \Om)} D \vn \quad\text{ in } L^s (\Rn, \Rnn) \qquad \text{as } j \to \infty
\end{align*}
where $\chi_{\imT (u, \Om)} D \vn$ stands for the extension of $D \vn$ by zero outside $\imT (u, \Om)$, and analogously for $\chi_{\imT (u_j, \Om)} D \vn_j$,
and, by \cite[Prop.\ 7.8]{BaHeMo17},
\[
 I_{\mec} (u, \vn) \leq \liminf_{j \to \infty} I_{\mec} (u_j, \vn_j) .
\]

Now let $G \ssubset \imT (u, \Om)$ be open.
Then, by \cite[Lemma 3.6]{BaHeMo17}, there exists $j_0 \in \N$ such that for all $j \geq j_0$ we have $G \subset \imT (u_j, \Om)$.
Therefore, $\vn_j \weakc \vn$ in $W^{1,s} (G, \Rn)$ as $j \to \infty$, so by Theorem \ref{th:tqc},
\[
 \int_{G} V (\vn (y), D \vn (y)) \, \dd y \leq \liminf_{j \to \infty} \int_{G} V (\vn_j (y), D \vn_j (y)) \, \dd y
 \leq \liminf_{j \to \infty } I_{\nem} (u_j, \vn_j) .
\]
As this is true for all open $G \ssubset \imT (u, \Om)$ we obtain
\[
 I_{\nem} (u, \vn) \leq \liminf_{j \to \infty} I_{\nem} (u_j, \vn_j) ,
\]
which concludes the proof.
\end{proof}

The compactness for sequences bounded in energy is as follows.
Its proof, again, is a rewriting of that of \cite[Prop.\ 7.1 and Th.\ 8.2]{BaHeMo17} and will be omitted.

\begin{prop}\label{prop:compactness}
Let $s > 1$ and $p>n-1$.
Let $W:\R^{n\times n}_{+}\times \mS^{n-1}\to [0,\infty)$ satisfy \eqref{eq:coercWtilde}--\eqref{eq:h1}
for a constant $c>0$ and a Borel function $\theta : (0, \infty) \to [0, \infty)$.
Let $V : T^n \mS^{n-1} \to [0,\infty)$ satisfy
\begin{equation}\label{eq:Vcoerc}
 c \left| \zeta \right|^s - \frac{1}{c} \leq V (z, \zeta) ,Ê\qquad (z, \zeta) \in T^n \mS^{n-1} .
\end{equation}
For each $j\in \N$, let $(u_j, \vn_j) \in L^1 (\Om, \Rn) \times L^1 (\Rn, \Rn)$ satisfy
\[
 \sup_{j \in \N} I(u_j, \vn_j) < \infty .
\]
Then there exist a subsequence (not relabelled) and $(u, \vn) \in \B$ such that \eqref{eq:ujnjun} holds.
\end{prop}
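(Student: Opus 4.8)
The plan is to follow the standard compactness scheme of the direct method, extracting from the energy bound a weakly convergent sequence for the deformations in $W^{1,p}$, a weakly convergent sequence for the (zero-extended) gradients of the directors in $L^s$, and then upgrading these to the $L^1$ convergence \eqref{eq:ujnjun} that places the limit in $\B$. First I would use the coercivity \eqref{eq:coercWtilde} together with \eqref{eq:h1} and the Dirichlet datum $u_j|_\Gamma = u_0$: the bound $\sup_j I(u_j,\vn_j)<\infty$ gives $\sup_j \int_\Om |Du_j|^p\,\dd x < \infty$, and since $\Om$ is a bounded Lipschitz domain with $\Ha^{n-1}(\Gamma)>0$, a Poincar\'e-type inequality controls $\|u_j\|_{W^{1,p}}$; hence along a subsequence $u_j \weakc u$ in $W^{1,p}(\Om,\Rn)$ and $u_j \to u$ in $L^1(\Om,\Rn)$ by Rellich. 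The term $\theta(\det Du_j)$ being bounded in $L^1$, combined with \eqref{eq:h1}, yields equiintegrability of $\{\det Du_j\}$ (de la Vall\'ee Poussin), and since $p>n-1$ and each $u_j \in \A_p(\Om)$ one invokes the weak continuity of the determinant in $\A_p$ from \cite{BaHeMo17} to get $\det Du_j \weakc \det Du$ in $L^1(\Om)$; in particular $\det Du>0$ a.e., and the closedness of $\A_p$ under this convergence (again \cite{BaHeMo17}) gives $u \in \A_p(\Om)$ with $u|_\Gamma = u_0$. In the incompressible case one additionally passes $\det Du_j = 1$ to the limit.

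Next I would handle the director field. From \eqref{eq:Vcoerc} and the energy bound, $\{\chi_{\imT(u_j,\Om)} D\vn_j\}$ is bounded in $L^s(\Rn,\Rnn)$, so a further subsequence converges weakly in $L^s$ to some $H$. Since $|\vn_j| = 1$ a.e.\ on $\imT(u_j,\Om)$ and vanishes elsewhere, $\{\vn_j\}$ is bounded in $L^\infty$, hence (up to subsequence) converges weakly-$*$ in $L^\infty$ and, by the gradient bound on the images together with an exhaustion argument, strongly in $L^1_{loc}$; I would identify the limit $\vn$ and show $\vn|_{\imT(u,\Om)} \in W^{1,s}(\imT(u,\Om),\mS^{n-1})$ with $\vn = 0$ outside $\imT(u,\Om)$. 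The mechanism is the one used in \cite[Th.\ 8.2]{BaHeMo17}: for any open $G \ssubset \imT(u,\Om)$, by \cite[Lemma 3.6]{BaHeMo17} one has $G \subset \imT(u_j,\Om)$ for large $j$, so $\vn_j$ is bounded in $W^{1,s}(G,\mS^{n-1})$ and converges weakly there to a limit lying in $W^{1,s}(G,\mS^{n-1})$; covering $\imT(u,\Om)$ by such $G$'s and using a diagonal argument produces $\vn \in W^{1,s}(\imT(u,\Om),\mS^{n-1})$ with $D\vn = H$ on $\imT(u,\Om)$. For the part outside, I would argue that $H = 0$ a.e.\ on $\Rn \setminus \imT(u,\Om)$ and that $\vn_j \to 0$ there, using that $|\imT(u_j,\Om) \triangle \imT(u,\Om)| \to 0$ (a consequence of $\det Du_j \weakc \det Du$ and the identity $|\imT(u,\Om)| = \int_\Om \det Du$ valid in $\A_p$ from \cite{BaHeMo17}), so that the supports essentially stabilize. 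Together with the normalization $|\vn_j|=1$ on the images this gives $\vn_j \to \vn$ in $L^1(\Rn,\Rn)$.

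Assembling the pieces, the subsequence satisfies $u_j \to u$ in $L^1(\Om,\Rn)$ and $\vn_j \to \vn$ in $L^1(\Rn,\Rn)$, i.e.\ \eqref{eq:ujnjun}, and all the membership conditions defining $\B$ (respectively $\B_1$) have been verified for $(u,\vn)$; this completes the proof. The main obstacle I anticipate is not the deformation part — that is essentially \cite[Prop.\ 7.1]{BaHeMo17} — but the control of $\vn_j$ on the symmetric difference of the varying topological images: one must make rigorous that the sequence of images $\imT(u_j,\Om)$ converges to $\imT(u,\Om)$ in measure (which is where the weak $L^1$ convergence of the Jacobians and the area formula for $\A_p$ enter) and then extract, from the uniform $W^{1,s}$ bound on shrinking exhausting subdomains, a genuine $L^1$-limit for $\vn_j$ over all of $\Rn$ rather than just locally inside the limiting image. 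This is exactly the delicate point handled in the proof of \cite[Th.\ 8.2]{BaHeMo17}, which is why the authors state that the proof here is a rewriting of that argument and omit the details.
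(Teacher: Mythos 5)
Your argument is correct and is essentially the proof the paper has in mind: the paper omits the proof precisely because it is a rewriting of \cite[Prop.\ 7.1 and Th.\ 8.2]{BaHeMo17}, which is what you reconstruct (coercivity plus the Dirichlet datum for weak $W^{1,p}$ compactness, weak continuity of the Jacobian and closedness of $\A_p$ under this convergence, then the image-stability argument via \cite[Lemma 3.6]{BaHeMo17} and the identity $|\imT(u,\Om)|=\int_\Om \det Du$ to pass the directors to the limit in $L^1(\Rn,\Rn)$). The only point worth making explicit is that the Poincar\'e inequality you invoke needs $\Ha^{n-1}(\Gamma)>0$, an assumption left implicit in the paper but present in the setting of \cite{BaHeMo17} to which it defers.
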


Proposition \ref{prop:lowersemicont} and \ref{prop:compactness} yield, by the direct method of the calculus of variations, the following result on the existence of minimizers.

\begin{theorem}\label{th:existence}
Let $s > 1$ and $p>n-1$.
Let $W:\R^{n\times n}_{+}\times \mS^{n-1}\to [0,\infty)$ be continuous, polyconvex and such that \eqref{eq:coercWtilde}--\eqref{eq:h1} hold for a constant $c>0$ and a Borel function $\theta : (0, \infty) \to [0, \infty)$.
Let $V : T^n \mS^{n-1} \to [0, \infty)$ be continuous and tangentially quasiconvex such that \eqref{eq:Vsgrowth} holds.
If $\B\neq \varnothing$ and $I$ is not identically infinity, then $I$ attains its minimum in $\B$.
\end{theorem}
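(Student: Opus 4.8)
The plan is to apply the direct method of the calculus of variations, using the compactness and lower semicontinuity results already established. First I would take a minimizing sequence: since $\B \neq \varnothing$ and $I$ is not identically infinity on $\B$, the infimum $m := \inf_{\B} I = \inf_{L^1 \times L^1} I$ is a finite nonnegative number, and there exists a sequence $\{(u_j, \vn_j)\}_{j \in \N}$ in $L^1(\Om, \Rn) \times L^1(\Rn, \Rn)$ with $I(u_j, \vn_j) \to m$. In particular, discarding finitely many terms, we may assume $\sup_{j \in \N} I(u_j, \vn_j) < \infty$, so the hypotheses of Proposition \ref{prop:compactness} are met (the coercivity bounds \eqref{eq:coercWtilde}--\eqref{eq:h1} and \eqref{eq:Vcoerc} follow from those assumed here, the latter being the lower bound in \eqref{eq:Vsgrowth}).

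Next I would invoke Proposition \ref{prop:compactness} to extract a subsequence (not relabelled) and a limit $(u, \vn) \in \B$ with $(u_j, \vn_j) \to (u, \vn)$ in $L^1(\Om, \R^n) \times L^1(\Rn, \Rn)$ as $j \to \infty$. Then Proposition \ref{prop:lowersemicont}, whose hypotheses on $W$ and $V$ coincide with those of the theorem, yields
\[
 I(u, \vn) \leq \liminf_{j \to \infty} I(u_j, \vn_j) = m .
\]
Since $(u, \vn) \in \B$, we also have $I(u, \vn) \geq \inf_{\B} I = m$, hence $I(u, \vn) = m$, so the minimum is attained at $(u, \vn)$.

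There is no serious obstacle here: the entire content has been front-loaded into Propositions \ref{prop:lowersemicont} and \ref{prop:compactness}, and the theorem is simply their combination via the standard three-line argument. The only points requiring a moment's care are bookkeeping ones: checking that the infimum of $I$ over all of $L^1 \times L^1$ equals the infimum over $\B$ (immediate, since $I = \infty$ off $\B$), that finiteness of the infimum lets us pass to a subsequence with uniformly bounded energy, and that the coercivity hypothesis \eqref{eq:Vcoerc} needed by Proposition \ref{prop:compactness} is weaker than \eqref{eq:Vsgrowth} assumed here. The incompressible statement, were one to state it, would follow identically with $I_1$, $\B_1$ in place of $I$, $\B$, using that $\B_1$ is closed under the same convergence because the constraint $Du(x) \in SL(n)$ passes to the weak limit together with $\det Du_j \weakc \det Du$ in $L^1$.
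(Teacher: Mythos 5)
Your proposal is correct and follows exactly the route the paper takes: the theorem is presented there as an immediate consequence of Propositions \ref{prop:lowersemicont} and \ref{prop:compactness} via the direct method, which is precisely your three-step argument (minimizing sequence, compactness, lower semicontinuity). The bookkeeping points you flag are the right ones and are all harmless.
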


In the incompressible case, the analogue results are as follows; as commented in \cite[Rks.\ 7.9 and 8.4]{BaHeMo17}, the incompressibility can easily be taken into account.

\begin{prop}\label{prop:lowersemicontIncomp}
Let $s > 1$ and $p>n-1$.
Let \eqref{eq:ujnjun} hold.
Let $W: SL(n) \times \mS^{n-1}\to [0,\infty)$ be continuous, polyconvex and such that
\begin{equation}\label{eq:coercWtildeIncomp}
 W (F, \vn) \geq c \, |F|^p - \frac{1}{c} , \qquad F \in SL(n) , \quad \vn \in \mS^{n-1}
\end{equation}
for a constant $c>0$.
Let $V : T^n \mS^{n-1} \to [0, \infty)$ be continuous and tangentially quasiconvex such that bound \eqref{eq:Vsgrowth} holds.
Then
\[
 I_1 (u, \vn) \leq \liminf_{j \to \infty} I_1 (u_j, \vn_j) .
\]
\end{prop}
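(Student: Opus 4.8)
The plan is to reduce the incompressible statement to the compressible one already proved in Proposition~\ref{prop:lowersemicont}, following the strategy indicated in \cite[Rks.\ 7.9 and 8.4]{BaHeMo17}. First I would extend $W$ from $SL(n) \times \mS^{n-1}$ to a function $\widetilde{W} : \Rnn_+ \times \mS^{n-1} \to [0,\infty)$ that is continuous, polyconvex, agrees with $W$ on $SL(n) \times \mS^{n-1}$, and satisfies the coercivity bound \eqref{eq:coercWtilde}--\eqref{eq:h1} of the compressible setting. A natural choice is $\widetilde{W}(F, \vn) := W\bigl((\det F)^{-1/n} F, \vn\bigr) + \theta(\det F)$, where $\theta : (0,\infty) \to [0,\infty)$ is any convex function with $\theta(1) = 0$ satisfying \eqref{eq:h1} (for instance a function behaving like $t + t^{-1}$ near $0$ and $\infty$): the first term is polyconvex because $F \mapsto (\det F)^{-1/n} F$ composed with the polyconvex $W(\cdot,\vn)$ on $SL(n)$ can be realized as a convex function of the minors once one checks the standard computation, and adding the convex function $\theta \circ \det$ preserves polyconvexity; the growth bound $\widetilde{W}(F,\vn) \geq c|F|^p + \theta(\det F)$ follows from \eqref{eq:coercWtildeIncomp} applied to the normalized matrix together with the homogeneity $|(\det F)^{-1/n}F|^p = (\det F)^{-p/n}|F|^p$, absorbing the determinant factor into a modified $\theta$ if necessary.

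Next I would define $\widetilde{I}_{\mec}$, $\widetilde{I}_{\nem}$, $\widetilde{I}$ exactly as $I_{\mec}$, $I_{\nem}$, $I$ in \eqref{eq:Idefinitions} but using $\widetilde{W}$ in place of $W$ and the larger admissible set $\B$ in place of $\B_1$. The key point is that on the subset $\B_1 \subset \B$ of maps with $Du \in SL(n)$ a.e., we have $\widetilde{I}(u,\vn) = I_1(u,\vn)$, since $\widetilde{W}(Du,\vn(u)) = W(Du,\vn(u))$ pointwise when $\det Du = 1$; and outside $\B_1$ both $I_1$ and $\widetilde{I}$ may differ, but that is irrelevant for what follows. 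Now suppose $(u_j,\vn_j) \to (u,\vn)$ in $L^1 \times L^1$ with $\liminf_j I_1(u_j,\vn_j)$ finite (otherwise there is nothing to prove); passing to a subsequence realizing the liminf as a limit, we have $I_1(u_j,\vn_j) = \widetilde{I}(u_j,\vn_j)$ for all $j$, because finiteness of $I_1(u_j,\vn_j)$ forces $(u_j,\vn_j) \in \B_1$. Applying Proposition~\ref{prop:lowersemicont} to $\widetilde{W}$ and $V$ gives
\[
 \widetilde{I}(u,\vn) \leq \liminf_{j\to\infty} \widetilde{I}(u_j,\vn_j) = \liminf_{j\to\infty} I_1(u_j,\vn_j).
\]

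It remains to show $I_1(u,\vn) \leq \widetilde{I}(u,\vn)$, i.e.\ that the limit $(u,\vn)$ lies in $\B_1$ (when the right-hand side is finite) and that $\widetilde{W}(Du,\vn(u)) = W(Du,\vn(u))$ there. Finiteness of $\widetilde{I}(u,\vn)$ already gives $(u,\vn) \in \B$, so the only thing to check is $\det Du = 1$ a.e.\ in $\Om$. This follows from the weak continuity of the determinant: as in the proof of Proposition~\ref{prop:lowersemicont}, the energy bound forces $u_j \weakc u$ in $W^{1,p}(\Om,\Rn)$ with $\det Du_j \weakc \det Du$ in $L^1(\Om)$, and since $\det Du_j = 1$ a.e.\ for every $j$ (as $(u_j,\vn_j) \in \B_1$), the weak limit satisfies $\det Du = 1$ a.e.; hence $(u,\vn) \in \B_1$ and $I_1(u,\vn) = \widetilde{I}(u,\vn)$. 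Combining the two displayed inequalities yields $I_1(u,\vn) \leq \liminf_j I_1(u_j,\vn_j)$, as desired. The main obstacle is the construction of the extension $\widetilde{W}$: one must verify carefully that the isochoric-plus-volumetric split preserves polyconvexity in the sense of the paper's Definition (case b, on $\Rnn_+$) and that the coercivity constants can be chosen uniformly; everything after that is a routine transfer of the already-established compressible result.
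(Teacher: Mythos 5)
Your overall reduction strategy (extend $W$ to $\Rnn_+$, apply Proposition \ref{prop:lowersemicont}, then use weak continuity of the determinant to show that the constraint passes to the limit) is reasonable in outline, and the final step --- $\det Du_j \weakc \det Du$ in $L^1$ forcing $\det Du =1$ a.e., hence $(u,\vn)\in\B_1$ --- is precisely the observation the paper relies on. But the construction of the extension $\widetilde W$ contains a genuine gap, in fact two. First, the claim that $F \mapsto W\bigl((\det F)^{-1/n}F,\vn\bigr)$ is polyconvex on $\Rnn_+$ is not a ``standard computation'': the $k\times k$ minors of $(\det F)^{-1/n}F$ equal $(\det F)^{-k/n}$ times the corresponding minors of $F$, so you are composing the convex function $\Phi$ representing $W(\cdot,\vn)$ with a non-affine map of the minors, and convexity is not preserved. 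It is known that the isochoric part of a polyconvex energy need not be polyconvex (nor even rank-one convex), so this step is not merely unverified --- it is false in general. Second, the coercivity \eqref{eq:coercWtilde} fails for your $\widetilde W$: from \eqref{eq:coercWtildeIncomp} you only get $\widetilde W(F,\vn) \geq c\,(\det F)^{-p/n}|F|^p - 1/c + \theta(\det F)$, and for fixed large $\det F$ with $|F|\to\infty$ this grows like $c\,(\det F)^{-p/n}|F|^p$, which cannot dominate $c'|F|^p$ for any fixed $c'>0$ uniformly in $\det F$; no choice of $\theta$, being a function of $\det F$ alone, can repair this.

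The paper attempts no such extension. Its (omitted) proof simply reruns the argument of Proposition \ref{prop:lowersemicont} in the incompressible setting: the extension of $W$ by infinity outside $SL(n)\times\mS^{n-1}$ is still polyconvex (see the remark following the definition of polyconvexity in Section \ref{se:quasiconvex}), the polyconvexity-based lower semicontinuity of the mechanical term from \cite[Prop.\ 7.8 and Rk.\ 7.9]{BaHeMo17} applies directly on $SL(n)$, the nematic term is handled identically, and the constraint $\det Du=1$ survives the limit by the weak continuity of the determinant in $\A_p$ --- the one step you did identify correctly. If you wish to keep a reduction-to-the-compressible-case structure, you would need either to prove Proposition \ref{prop:lowersemicont} for integrands allowed to take the value $+\infty$, or to produce a finite, continuous, polyconvex extension with the stated coercivity; your proposal supplies neither.
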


\begin{prop}\label{prop:compactnessIncomp}
Let $s > 1$ and $p>n-1$.
Let $W: SL(n) \times \mS^{n-1}\to [0,\infty)$ satisfy \eqref{eq:coercWtildeIncomp} for a constant $c>0$.
Let $V : T^n \mS^{n-1} \to [0,\infty)$ satisfy \eqref{eq:Vcoerc}.
For each $j\in \N$, let $(u_j, \vn_j) \in L^1 (\Om, \Rn) \times L^1 (\Rn, \Rn)$ satisfy
\[
 \sup_{j \in \N} I_1 (u_j, \vn_j) < \infty .
\]
Then there exist a subsequence (not relabelled) and $(u, \vn) \in \B_1$ such that \eqref{eq:ujnjun} holds.
\end{prop}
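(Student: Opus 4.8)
The plan is to obtain Proposition \ref{prop:compactnessIncomp} as essentially a corollary of the compressible compactness result Proposition \ref{prop:compactness}, isolating the one place where the incompressibility constraint must be fed in. The key observation is that a bound $\sup_{j} I_1(u_j,\vn_j)<\infty$ forces, for every $j$ in the (not relabelled) tail, $(u_j,\vn_j)\in\B_1$; in particular $Du_j(x)\in SL(n)$ a.e., so $\det Du_j=1$ a.e. The coercivity \eqref{eq:coercWtildeIncomp} then gives a uniform bound on $\|Du_j\|_{L^p(\Om,\Rnn)}$, and \eqref{eq:Vcoerc} gives a uniform bound on $\|D\vn_j\|_{L^s(\imT(u_j,\Om),\Rnn)}$. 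From here the argument is the same as in the compressible case: the boundary datum $u_j|_\Gamma=u_0$ together with a Poincar\'e-type inequality on the Lipschitz domain $\Om$ (valid since $\Gamma$ is $(n-1)$-rectifiable with positive $\Ha^{n-1}$-measure, as implicitly required) upgrades the gradient bound to a $W^{1,p}$ bound, whence, after passing to a further subsequence, $u_j\weakc u$ in $W^{1,p}(\Om,\Rn)$ and $u_j\to u$ in $L^1(\Om,\Rn)$ by Rellich.

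Next I would invoke the stability of the class $\Ap$ and of the deformed-configuration machinery under weak convergence, exactly as in \cite[Prop.\ 7.1, Th.\ 8.2]{BaHeMo17}: since $\det Du_j=1$ and $p>n-1$, the class-$\Ap$ conditions pass to the limit, so $u\in\A_p(\Om)$, and moreover $\det Du=1$ a.e.\ (the constant sequence $\det Du_j\equiv 1$ converges weakly in $L^1$ to $\det Du$ by the weak continuity of the determinant in $\Ap$), hence $Du(x)\in SL(n)$ a.e. The trace condition $u|_\Gamma=u_0$ is preserved by weak $W^{1,p}$-convergence, and the topological-image convergence lemma \cite[Lemma 3.6]{BaHeMo17} lets one show, after the usual diagonal extraction over an exhaustion $G_i\ssubset\imT(u,\Om)$, that $\chi_{\imT(u_j,\Om)}D\vn_j\weakc\chi_{\imT(u,\Om)}D\vn$ in $L^s(\Rn,\Rnn)$ for some $\vn$ with $\vn|_{\imT(u,\Om)}\in W^{1,s}(\imT(u,\Om),\mS^{n-1})$ and $\vn=0$ outside; in particular $\vn_j\to\vn$ in $L^1(\Rn,\Rn)$. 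This yields $(u,\vn)\in\B_1$ and \eqref{eq:ujnjun}, which is the claim.

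The main obstacle — which is precisely why the result is only sketched and reduced to \cite{BaHeMo17} — is the convergence of the nematic directors when the domain of definition $\imT(u_j,\Om)$ moves with $j$. One cannot simply extract a weak $W^{1,s}$ limit of $\vn_j$ on a fixed domain; the correct object is the zero-extension of $D\vn_j$, and controlling it requires knowing that the topological images $\imT(u_j,\Om)$ do not suddenly shrink, which is the content of \cite[Lemma 3.6]{BaHeMo17} and rests on the local invertibility and degree theory of the class $\Ap$. The incompressibility constraint itself is benign here: it only replaces the determinant term $\theta(\det F)$ in \eqref{eq:coercWtilde} by the hard constraint $\det F=1$, which, if anything, simplifies the compactness of $\det Du_j$ (it is already constant). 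For this reason the proof is omitted, being a verbatim transcription of \cite[Prop.\ 7.1 and Th.\ 8.2]{BaHeMo17} with the trivial modification noted in \cite[Rks.\ 7.9 and 8.4]{BaHeMo17}.
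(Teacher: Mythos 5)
Your proposal is correct and follows the same route the paper takes: the paper omits this proof precisely because it is a transcription of \cite[Prop.\ 7.1 and Th.\ 8.2]{BaHeMo17} with the incompressibility handled as in \cite[Rks.\ 7.9 and 8.4]{BaHeMo17}, and your sketch reproduces exactly that outline (coercivity giving the $W^{1,p}$ and $L^s$ bounds, stability of $\A_p$ and weak continuity of the determinant, and \cite[Lemma 3.6]{BaHeMo17} for the moving domains of the director). Your observation that the constraint $\det Du_j\equiv 1$ only simplifies matters relative to the compressible case is precisely the content of the cited remarks.
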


\begin{theorem}\label{th:existenceIncomp}
Let $s > 1$ and $p>n-1$.
Let $W: SL(n) \times \mS^{n-1}\to [0,\infty)$ be continuous, polyconvex and such that \eqref{eq:coercWtildeIncomp} holds for a constant $c>0$.
Let $V : T^n \mS^{n-1} \to [0, \infty)$ be continuous and tangentially quasiconvex such that \eqref{eq:Vsgrowth} holds.
If $\B_1 \neq \varnothing$ and $I_1$ is not identically infinity, then $I_1$ attains its minimum in $\B_1$.
\end{theorem}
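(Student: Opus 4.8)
The plan is to deduce Theorem \ref{th:existenceIncomp} from the two preceding propositions in the incompressible setting (Propositions \ref{prop:lowersemicontIncomp} and \ref{prop:compactnessIncomp}) exactly as Theorem \ref{th:existence} follows from Propositions \ref{prop:lowersemicont} and \ref{prop:compactness}, that is, by the direct method of the calculus of variations. First I would set $m := \inf_{(u,\vn) \in \B_1} I_1 (u, \vn)$ and observe that, since $\B_1 \neq \varnothing$ and $I_1$ is not identically infinity, there is at least one admissible pair with finite energy, so $m < \infty$; also $m \geq 0$ because the integrands $W$ and $V$ are nonnegative (note that \eqref{eq:Vsgrowth} only forces $V \geq c|\zeta|^s - 1/c$, but $V$ is assumed to take values in $[0,\infty)$, so $I_1 \geq 0$). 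Hence $m$ is a finite nonnegative real number and we may pick a minimizing sequence $\{(u_j, \vn_j)\}_{j \in \N} \subset L^1 (\Om, \Rn) \times L^1 (\Rn, \Rn)$ with $I_1 (u_j, \vn_j) \to m$ as $j \to \infty$.

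Next I would extract compactness. Since $I_1 (u_j, \vn_j) \to m < \infty$, the sequence satisfies $\sup_{j \in \N} I_1 (u_j, \vn_j) < \infty$. The coercivity hypothesis \eqref{eq:coercWtildeIncomp} on $W$ and \eqref{eq:Vsgrowth} on $V$ (which in particular gives \eqref{eq:Vcoerc}) are precisely the assumptions of Proposition \ref{prop:compactnessIncomp}. Applying it yields a (non-relabelled) subsequence and a limit pair $(u, \vn) \in \B_1$ with $(u_j, \vn_j) \to (u, \vn)$ in $L^1 (\Om, \Rn) \times L^1 (\Rn, \Rn)$, i.e.\ \eqref{eq:ujnjun} holds along this subsequence.

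Finally I would invoke lower semicontinuity. With \eqref{eq:ujnjun} now in hand, the hypotheses of Proposition \ref{prop:lowersemicontIncomp} are met ($W$ continuous, polyconvex in the first variable, satisfying \eqref{eq:coercWtildeIncomp}; $V$ continuous, tangentially quasiconvex, satisfying \eqref{eq:Vsgrowth}). It gives
\[
 I_1 (u, \vn) \leq \liminf_{j \to \infty} I_1 (u_j, \vn_j) = \lim_{j \to \infty} I_1 (u_j, \vn_j) = m ,
\]
where the middle equality holds because we passed to a subsequence of a sequence that already converged to $m$. Since $(u, \vn) \in \B_1$, the reverse inequality $I_1 (u, \vn) \geq m$ is immediate from the definition of $m$ as an infimum over $\B_1$. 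Hence $I_1 (u, \vn) = m$ and the minimum is attained at $(u, \vn)$.

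There is no real obstacle here: the content has been pushed entirely into Propositions \ref{prop:lowersemicontIncomp} and \ref{prop:compactnessIncomp}, and the only point that deserves a word of care is checking that the hypotheses of those two propositions are literally implied by the hypotheses of the theorem — in particular that \eqref{eq:Vsgrowth} contains \eqref{eq:Vcoerc}, and that the finiteness of $m$ is genuinely guaranteed by the assumptions "$\B_1 \neq \varnothing$" and "$I_1$ not identically infinity." Everything else is the standard three-line direct-method argument, so the proof can simply be stated as "this follows from Propositions \ref{prop:lowersemicontIncomp} and \ref{prop:compactnessIncomp} by the direct method, exactly as Theorem \ref{th:existence} follows from Propositions \ref{prop:lowersemicont} and \ref{prop:compactness}."
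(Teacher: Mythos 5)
Your proposal is correct and is exactly the argument the paper intends: the theorem is stated as a direct consequence of Propositions \ref{prop:lowersemicontIncomp} and \ref{prop:compactnessIncomp} via the direct method, and your verification of the hypotheses (in particular that \eqref{eq:Vsgrowth} implies \eqref{eq:Vcoerc} and that the assumptions guarantee a finite infimum) fills in the routine details the authors omit. Nothing is missing.
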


\section{Product and chain rule for Sobolev functions}\label{se:productchain}

In this section we state three results proved in \cite{CoDo15} about the product of $L^1$ functions and the composition of a Lipschitz function with a Sobolev function.

The next lemma, taken from \cite[Lemma 3.1]{CoDo15}, states that there are many translations such that the product of the translated $L^{1}$ functions is in $L^{1}$.
\begin{lem}\label{lem composition L1}
Let $x_0\in\R^n$ and $r>0$.
Let $\psi\in W^{1,\infty}(B(0,r),\bar{B}(0,r))$, $g\in L^{1}(B(0,r))$ and $f\in L^{1}(B(x_0,2r))$.
Then, there exists a measurable set $E\subset B(x_0,r)$ of positive measure such that for any $a_0\in E$, the function
\[\tilde{f} (x) :=f(a_0 + \psi(x-a_0)) \, g(x-a_0) , \qquad x \in B(a_0,r) \]
belongs to $L^{1}(B(a_0,r))$ and 
\[\|\tilde{f}\|_{L^{1}(B(a_0,r))}\le \frac{1}{|B(0,r)|}\|f\|_{L^{1}(B(x_0,2r))}\|g\|_{L^{1}(B(0,r))}.\]
\end{lem}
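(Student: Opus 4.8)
The plan is to prove this by a Fubini-type averaging argument. The key observation is that the quantity we want to control, $\|\tilde f\|_{L^1(B(a_0,r))}$, depends on the translation parameter $a_0$, and we will show that its average over $a_0 \in B(x_0,r)$ is finite and bounded by the right-hand side; hence there must be a set of positive measure of good choices of $a_0$. First I would write out explicitly, for $a_0 \in B(x_0,r)$,
\[
 \|\tilde f\|_{L^1(B(a_0,r))} = \int_{B(a_0,r)} |f(a_0 + \psi(x-a_0))| \, |g(x-a_0)| \, \dd x = \int_{B(0,r)} |f(a_0 + \psi(y))| \, |g(y)| \, \dd y,
\]
where I substituted $y = x - a_0$. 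The point of this substitution is that the domain of integration in $y$ is now the fixed ball $B(0,r)$, independent of $a_0$.

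Next I would integrate this expression over $a_0 \in B(x_0,r)$ and apply Tonelli's theorem to exchange the order of integration:
\[
 \int_{B(x_0,r)} \|\tilde f\|_{L^1(B(a_0,r))} \, \dd a_0 = \int_{B(0,r)} |g(y)| \left( \int_{B(x_0,r)} |f(a_0 + \psi(y))| \, \dd a_0 \right) \dd y .
\]
For the inner integral, I substitute $b = a_0 + \psi(y)$; since $\psi(y) \in \bar B(0,r)$ and $a_0 \in B(x_0,r)$, the variable $b$ ranges within $B(x_0,2r)$, so the inner integral is bounded by $\|f\|_{L^1(B(x_0,2r))}$ for every fixed $y$. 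This gives
\[
 \int_{B(x_0,r)} \|\tilde f\|_{L^1(B(a_0,r))} \, \dd a_0 \leq \|f\|_{L^1(B(x_0,2r))} \, \|g\|_{L^1(B(0,r))} .
\]

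Finally, I divide by $|B(x_0,r)| = |B(0,r)|$ to obtain that the average of $\|\tilde f\|_{L^1(B(a_0,r))}$ over $a_0 \in B(x_0,r)$ is at most $\frac{1}{|B(0,r)|}\|f\|_{L^1(B(x_0,2r))}\|g\|_{L^1(B(0,r))}$. In particular this average is finite, so $a_0 \mapsto \|\tilde f\|_{L^1(B(a_0,r))}$ is finite for a.e.\ $a_0$, and the set $E$ of those $a_0 \in B(x_0,r)$ for which both $\tilde f \in L^1(B(a_0,r))$ and $\|\tilde f\|_{L^1(B(a_0,r))}$ is at most the stated bound has positive measure (indeed, if the bound failed on a set of full measure in $B(x_0,r)$, the average would exceed it). I expect the only slightly delicate point to be the measurability of $(a_0, y) \mapsto f(a_0 + \psi(y)) g(y)$ needed to justify Tonelli — this follows since $\psi$ is continuous (being Lipschitz) and $f, g$ are measurable, so the composition is measurable on the product space; one may also invoke that the substitutions used are bi-Lipschitz (or at least that $\psi$ is Lipschitz) to keep everything within the measurable category. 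No genuine obstacle arises; the argument is a clean application of Tonelli plus the observation that $\psi$ takes values in $\bar B(0,r)$.
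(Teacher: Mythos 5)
Your argument is correct and is essentially the standard proof of this result: the paper itself states the lemma without proof, quoting it from \cite[Lemma 3.1]{CoDo15}, and the change of variables $y=x-a_0$ followed by Tonelli, the observation that $b=a_0+\psi(y)$ stays in $B(x_0,2r)$, and the averaging step producing a positive-measure set of good $a_0$ is exactly the intended argument. The only rough edge is your justification of joint measurability: a Lebesgue-measurable $f$ composed with a continuous map need not be measurable, so you should fix a Borel representative of $f$ (and note, via a Fubini argument on the null set where representatives differ, that for a.e.\ $a_0$ the function $\tilde f$ is unchanged a.e.\ by this choice); with that standard fix the proof is complete.
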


The following result is a weaker version of \cite[Lemma A.1]{CoDo15}.
\begin{lem}\label{lem:L1lipisL1}
Let $\psi\in W^{1,\infty}(B(0,1),\bar{B}(0,1))$ and $f\in L^{1}(B(0,2))$. Then the map $(x,a_0) \mapsto f(a_0+\psi(x-a_0))$ is measurable and for almost all $a_0\in B(0,1)$ the function
\[x \mapsto f(a_0+\psi(x-a_0))\]
is in $L^{1}(B(a_0,1))$.
\end{lem}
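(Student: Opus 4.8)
The plan is to deduce Lemma \ref{lem:L1lipisL1} from the Fubini–Tonelli theorem applied to a carefully chosen nonnegative function on the product space $B(0,1) \times B(0,1)$. First I would address the measurability claim: the map $(x,a_0) \mapsto a_0 + \psi(x - a_0)$ is continuous (as $\psi$ is Lipschitz, hence continuous, and addition is continuous), so it is Borel measurable as a map from $B(0,1) \times B(0,1)$ into $\bar B(0,1)$; composing with the Borel representative of $f \in L^1(B(0,2))$ gives a measurable function $(x,a_0) \mapsto f(a_0 + \psi(x-a_0))$ on the product. Note that $a_0 + \psi(x-a_0)$ indeed lands in $\bar B(0,1) \subset B(0,2)$ when $a_0 \in B(0,1)$ and $x \in B(a_0,1)$, since $\psi$ takes values in $\bar B(0,1)$ and $|a_0| < 1$; so the composition makes sense for $(x,a_0)$ in the relevant region, and we may extend $f$ by zero to all of $\R^n$ to make the composition defined everywhere.

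Next I would integrate. Consider the nonnegative measurable function $H(x,a_0) := |f(a_0 + \psi(x-a_0))|$ on $B(0,2) \times B(0,1)$ (extending $f$ by zero outside $B(0,2)$). By Tonelli's theorem,
\[
 \int_{B(0,1)} \left( \int_{B(a_0,1)} H(x,a_0) \, \dd x \right) \dd a_0 \leq \int_{B(0,1)} \left( \int_{B(0,2)} H(x,a_0) \, \dd x \right) \dd a_0 = \int_{B(0,2)} \left( \int_{B(0,1)} |f(a_0 + \psi(x-a_0))| \, \dd a_0 \right) \dd x .
\]
For each fixed $x$, the inner integral $\int_{B(0,1)} |f(a_0 + \psi(x-a_0))| \, \dd a_0$ is exactly the kind of quantity controlled in Lemma \ref{lem composition L1}; in fact the proof of that lemma (the change-of-variables / averaging estimate behind it) shows this is finite for a.e.\ $x$ and its integral over $x$ is bounded by a constant times $\|f\|_{L^1(B(0,2))}$. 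Hence the double integral on the left is finite, and by Tonelli again the function $a_0 \mapsto \int_{B(a_0,1)} |f(a_0+\psi(x-a_0))| \, \dd x$ is finite for almost every $a_0 \in B(0,1)$, which is precisely the assertion that $x \mapsto f(a_0 + \psi(x-a_0))$ lies in $L^1(B(a_0,1))$ for a.e.\ $a_0$.

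The step requiring the most care is the passage from the trivial bound $\int_{B(0,1)}\int_{B(0,2)} H \,\dd x\, \dd a_0 < \infty$ — which already follows by fixing $a_0$ and bounding $H(\cdot, a_0)$ in $L^1(B(0,2))$ crudely, except that this requires knowing $\int_{B(0,2)} |f(a_0 + \psi(x - a_0))| \, \dd x < \infty$ for each (or a.e.) $a_0$, which is itself not completely obvious since $\psi$ need not be a diffeomorphism and can collapse sets of positive measure. This is exactly why one invokes the reasoning of Lemma \ref{lem composition L1}: one uses the coarea-type or area-formula estimate there (valid because $\psi$ is Lipschitz) to see that $\int_{B(0,1)} \int_{B(0,1)} |f(a_0 + \psi(x-a_0))| \,\dd a_0 \, \dd x \lesssim \|f\|_{L^1(B(0,2))}$, and then Fubini finishes. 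So the honest route is: first establish the joint measurability, then establish finiteness of the double integral by the same translation-averaging argument that underlies Lemma \ref{lem composition L1}, and finally quote Tonelli to extract the a.e.-$a_0$ conclusion. The main obstacle is organizing the Lipschitz change-of-variables estimate cleanly; everything else is routine measure theory.
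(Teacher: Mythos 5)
Your overall architecture is the right one (and is essentially the argument behind the cited result \cite[Lemma A.1]{CoDo15}, which the paper does not reprove): establish joint measurability of $(x,a_0)\mapsto f(a_0+\psi(x-a_0))$, bound the double integral of its absolute value over the product space, and invoke Tonelli to conclude finiteness of the $x$-integral for a.e.\ $a_0$. The measurability step is acceptable (modulo the small point that $H(x,a_0)$ must be declared zero when $|x-a_0|\geq 1$, since $\psi$ is only defined on $B(0,1)$; extending $f$ by zero does not address this).

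The gap is in your justification of the key bound $\iint |f(a_0+\psi(x-a_0))|\,\dd x\,\dd a_0\lesssim \|f\|_{L^1(B(0,2))}$. After swapping the order of integration you are left, for fixed $x$, with $\int_{B(0,1)}|f(a_0+\psi(x-a_0))|\,\dd a_0$, which you claim is controlled by ``the reasoning of Lemma \ref{lem composition L1}'' via a ``coarea-type or area-formula estimate''. Neither works in that order of integration: for fixed $x$ the map $a_0\mapsto a_0+\psi(x-a_0)$ is a general Lipschitz map whose Jacobian $\det\bigl(I-D\psi(x-a_0)\bigr)$ may vanish on a set of positive measure (take $\psi=\id$: the map collapses all admissible $a_0$ to the single point $x$), and the area formula only controls $\int |f\circ g|\,|\det Dg|$, not $\int|f\circ g|$, so it yields nothing without a lower bound on the Jacobian. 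The estimate that actually underlies Lemma \ref{lem composition L1} is different and elementary: perform the measure-preserving linear change of variables $(x,a_0)\mapsto (w,a_0)=(x-a_0,a_0)$ on the product, so that the double integral becomes $\int_{B(0,1)}\int_{B(0,1)}|f(a_0+\psi(w))|\,\dd a_0\,\dd w$; now for \emph{fixed} $w$ the inner map $a_0\mapsto a_0+\psi(w)$ is a rigid translation whose image $B(\psi(w),1)$ lies in $B(0,2)$, whence the inner integral is at most $\|f\|_{L^1(B(0,2))}$ and the double integral is at most $|B(0,1)|\,\|f\|_{L^1(B(0,2))}$. (The same translation argument also shows that the composition is independent of the chosen representative of $f$ up to a product-null set, which tidies up the measurability claim.) With that substitution in place of your fixed-$x$ argument, Tonelli finishes the proof exactly as you describe.
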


The following version of the chain rule was proved in \cite[Lemma A.2]{CoDo15}.
\begin{lem}\label{lem:chainrule}
Let $\psi\in W^{1,\infty}(B(0,1),\bar{B}(0,1))$ and $u\in W^{1,1}(B(0,2))$. Then for almost all $a_0\in B(0,1)$ the function
\[
 w(x) := u(a_0+\psi(x-a_0)) , \qquad x \in B(a_0,1)
\]
belongs to $W^{1,1}(B(a_0,1))$ and
\[Dw(x)=Du(a_0+\psi(x-a_0)) \, D\psi(x-a_0).\]
If, in addition, $\psi = \id$ on $\partial B(0,1)$ then $w=u$ on $\partial B(a_0,1)$ in the sense of traces.
\end{lem}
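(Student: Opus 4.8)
The plan is to prove the statement by smoothing $u$ and coupling this with a Fubini argument that selects, in one stroke, a full-measure set of admissible base points $a_0$. I would write $\Psi_a(x) := a + \psi(x - a)$, so that for $a \in B(0,1)$ this is a Lipschitz map of $B(a,1)$ into $\bar B(0,2)$ (because $\psi$ takes values in $\bar B(0,1)$ and $|a|<1$), with $D\Psi_a(x) = D\psi(x - a)$ for a.e.\ $x$. First I would invoke Lemma \ref{lem:L1lipisL1} with $f = u$ and with $f = |Du|$ to produce a full-measure set of $a_0 \in B(0,1)$ for which both $x \mapsto u(\Psi_{a_0}(x))$ and $x \mapsto Du(\Psi_{a_0}(x))$ are well defined and lie in $L^1(B(a_0,1))$; since $D\psi \in L^\infty$, the candidate derivative $x \mapsto Du(\Psi_{a_0}(x))\, D\psi(x - a_0)$ is then automatically in $L^1(B(a_0,1))$.

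Next I would take $u_k \in C^\infty(\bar B(0,2))$ with $u_k \to u$ in $W^{1,1}(B(0,2))$. For smooth $u_k$ the classical chain rule for a $C^1$ function composed with a Lipschitz map gives $w_k := u_k \circ \Psi_{a_0} \in W^{1,\infty}(B(a_0,1))$ with $Dw_k(x) = Du_k(\Psi_{a_0}(x))\, D\psi(x - a_0)$ a.e. The key quantitative step is the estimate, obtained by Fubini, the substitution $y = x - a_0$, and then, for each fixed $y$, the translation $a_0 \mapsto a_0 + \psi(y)$ together with $B(\psi(y),1) \subset B(0,2)$,
\[
 \int_{B(0,1)} \int_{B(a_0,1)} \bigl| (u_k - u)(\Psi_{a_0}(x)) \bigr| \, \dd x \, \dd a_0 \le |B(0,1)| \, \| u_k - u \|_{L^1(B(0,2))} \longrightarrow 0 ,
\]
and its analogue with $Du_k - Du$ in place of $u_k - u$, absorbing the factor $\| D\psi \|_{L^\infty}$. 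These show that $\| w_k - w \|_{L^1(B(a_0,1))}$ and $\| Dw_k - (Du \circ \Psi_{a_0})\, D\psi(\cdot - a_0) \|_{L^1(B(a_0,1))}$ tend to zero in $L^1(\dd a_0)$, so after passing to a subsequence there is a further full-measure set of $a_0$ along which $w_k \to w$ and $Dw_k$ converges to that limit in $L^1(B(a_0,1))$. Closedness of the distributional gradient then yields $w \in W^{1,1}(B(a_0,1))$ with the asserted formula for $Dw$.

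For the trace claim, when $\psi = \id$ on $\partial B(0,1)$ one has $\Psi_{a_0}(x) = x$ whenever $|x - a_0| = 1$, so $w_k = u_k$ on $\partial B(a_0,1)$ for every $k$; passing to the limit in $W^{1,1}(B(a_0,1))$, where $w_k \to w$ and $u_k \to u$, and using continuity of the trace operator $W^{1,1}(B(a_0,1)) \to L^1(\partial B(a_0,1))$, gives $w = u$ on $\partial B(a_0,1)$ in the sense of traces. The main obstacle I anticipate is the bookkeeping needed to decouple the two limiting procedures: one must ensure that a single subsequence of $\{u_k\}$ works for almost every $a_0$ simultaneously, and this is precisely what the Fubini and change-of-variables bound above delivers, in tandem with the a.e.\ integrability furnished by Lemma \ref{lem:L1lipisL1}.
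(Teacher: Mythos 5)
Your argument is correct; note that the paper itself gives no proof of this lemma, quoting it verbatim from \cite[Lemma A.2]{CoDo15}. The proof there follows essentially the same route you take---smooth approximation of $u$ together with the Fubini/translation averaging over the base point $a_0$ (the same device behind Lemmas \ref{lem composition L1} and \ref{lem:L1lipisL1}), then extraction of a single subsequence along which $w_k$ and $Dw_k$ converge in $L^1(B(a_0,1))$ for a.e.\ $a_0$---so there is nothing to correct.
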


\section{Recovery sequence}\label{sect:caprelaxrel}

In this section we prove the upper bound inequality by constructing a recovery sequence.

We first present the coercivity, growth and continuity conditions of the energy functions $W$ and $V$, which are slightly more restrictive that those of Section \ref{se:existence}.
Fix $p>n-1$, $q>1$ and $s > 1$.
In the compressible case, the conditions on $W$ are as follows.

\begin{enumerate}[(W)]

\item $W: \R^{n\times n}_{+} \times \mS^{n-1} \to [0,\infty)$ is continuous and there exist a convex $\theta:(0,\infty)\to [0,\infty)$, a bounded Borel $h:[0,2] \to [0,\infty)$ and $c>0$ such that 
\begin{align*}
 & \theta(t_1 t_2) \lesssim \left( 1 + \theta(t_1) \right) \left( 1+\theta(t_2) \right) , \qquad t_1, t_2 > 0 , \\
 & \lim_{t\to\infty}\frac{\theta(t)}{t}=\infty , \qquad \liminf_{t\to 0}t^{q'-1}\theta(t)>0, \qquad \lim_{t\to 0} h(t)=0 ,
 \end{align*}
 and for all $F \in \Rnn_+$ and $\vn,\vm \in \mS^{n-1}$,
\begin{align*}
 & \frac{1}{c} \left( \left| F \right|^{p} + \left| \cof F \right|^q  + \theta(\det F) \right) -c \leq W(F, \vn) \leq c \left( \left| F \right|^{p} +  \theta(\det F) + 1 \right) , \\
 & \left| W(F,\vn)-W(F,\vm) \right|\le h \left( |\vn-\vm| \right) W(F,\vn)  .
\end{align*}
\end{enumerate}
The function $W$ is extended to $(\Rnn \setminus \R^{n\times n}_{+}) \times \mS^{n-1}$ by infinity.
Observe that if $(u, \vn) \in \B$ satisfies $I_{\mec} (u, \vn) < \infty$ then $u \in \A_{p,q} (\Om)$.

In the incompressible case, the conditions on $W$ are:

\begin{enumerate}[(W$_1$)]

\item $W: SL(n) \times \mS^{n-1} \to [0,\infty)$ is continuous and there exist a bounded Borel $h:[0,2] \to [0,\infty)$ and $c>0$ such that $\lim_{t\to 0} h(t)=0$ and for all $F \in SL(n)$ and $\vn,\vm \in \mS^{n-1}$,
\begin{align*}
 & \frac{1}{c} |F|^{p}  -c \leq W(F, \vn) \leq c \left( |F|^{p} + 1 \right) , \\
 & \left| W(F,\vn)-W(F,\vm) \right|\le h \left( |\vn-\vm| \right) W(F,\vn)  .
\end{align*}
\end{enumerate}
The function $W$ is extended to $(\Rnn \setminus SL(n)) \times \mS^{n-1}$ by infinity.
Note that $q$ does not play any role in the incompressible case.

The assumption for $V$ is as follows:
\begin{enumerate}[(V)]
\item $V : T^n \mS^{n-1} \to [0, \infty)$ is continuous and there exists $c>0$ such that
\[
\frac{1}{c}|\zeta|^{s}-c\leq V(z, \zeta)\leq c|\zeta|^{s}+c , \qquad (z, \zeta) \in T^n \mS^{n-1}.
\]
\end{enumerate}

We define the admissible spaces $\B$ and $\B_1$ as in Section \ref{se:existence}, as well as the functionals \eqref{eq:Idefinitions}.
We also define the functionals
\[
 I^{*}, \, I^{*}_{\nem} , \, I^{*}_{\mec}, \, I_1^{*}, \, I^{*}_{1,\nem} , \, I^{*}_{1,\mec} : L^1 (\Om, \Rn) \times L^1 (\Rn, \Rn) \to [0,\infty]
\]
in a similar way to their counterparts \eqref{eq:Idefinitions}, but replacing $W$ with $W^{qc}$ and $V$ with $V^{tqc}$, i.e.,
\[
 I^*_{\mec} (u, \vn) = \begin{cases}
 \displaystyle \int_{\Omega} W^{qc} (Du(x),\vn(u(x)))\, \dd x , & \text{if } (u, \vn) \in \B , \\
 \infty , & \text{otherwise,}
 \end{cases}
\]
\[
 I^*_{\nem} (u, \vn) = \begin{cases}
 \displaystyle \int_{\imT(u,\Omega)} V^{tqc} (\vn(y), D\vn(y)) \, \dd y  , & \text{if } (u, \vn) \in \B , \\
 \infty , & \text{otherwise,}
 \end{cases}
\]
\[
 I^*_{1,\mec} (u, \vn) = \begin{cases}
 I^*_{\mec} (u, \vn) , & \text{if } (u, \vn) \in \B_1 , \\
 \infty , & \text{otherwise,}
 \end{cases}
 \qquad 
 I^*_{1,\nem} (u, \vn) = \begin{cases}
 I^*_{\nem} (u, \vn) , & \text{if } (u, \vn) \in \B_1 , \\
 \infty , & \text{otherwise,}
 \end{cases}
\]
$I^*:= I^*_{\mec} + I^*_{\nem}$ and $I^*_1 : = I^*_{1,\mec} + I^*_{1,\nem}$.
Here $W^{qc}$ is the quasiconvexification of $W$ with respect to the first variable.

Now we state the main result of this section: the existence of the recovery sequence.
It follows from Lemma \ref{lem conv uj} below.

\begin{theorem}\label{th conv nem det>0}
Let $q> 1$, $s> 1$, $p>n-1$, $V$ satisfy (V) and $W$ satisfy (W) (respectively, (W$_1$)). Let $\Omega \subset \R^{n}$ open bounded and Lipschitz. Then, for any $(u, \vn) \in L^1 (\Om, \Rn) \times L^1 (\Rn, \Rn)$ there is a sequence $\{(u_j, \vn_j)\}_{j\in\N}\subset L^1 (\Om, \Rn) \times L^1 (\Rn, \Rn)$ such that
\[
 (u_j, \vn_j) \to (u, \vn) \text{ in } L^1 (\Om, \R^n) \times L^1 (\Rn, \Rn) \qquad \text{as } j\to\infty
\]
and
\[
 \limsup_{j\to\infty} I (u_j, \vn_j) \le I^* (u, \vn) \qquad \text{(respectively, }\limsup_{j\to\infty} I_1 (u_j, \vn_j) \le I^*_1 (u, \vn) \text {).}
\]
\end{theorem}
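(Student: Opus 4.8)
The plan is to reduce Theorem~\ref{th conv nem det>0} to a single construction encapsulated in a forthcoming \emph{Lemma \ref{lem conv uj}}, which does all the real work for the mechanical term while leaving the nematic domain unchanged. The overall strategy splits into three layers. First, reduce to the case where $(u,\vn)$ is \emph{admissible} with finite relaxed energy: if $(u,\vn)\notin\B$ (resp.\ $\B_1$), or if $I^*(u,\vn)=\infty$, then there is nothing to prove, since any sequence converging in $L^1$ will do (one may take the constant sequence, because $\limsup_j I(u_j,\vn_j)\le\infty=I^*(u,\vn)$ trivially). So assume $(u,\vn)\in\B$ and $I^*(u,\vn)<\infty$; in particular, by the coercivity in (W), $u\in\A_{p,q}(\Om)$ and $\vn|_{\imT(u,\Om)}\in W^{1,s}(\imT(u,\Om),\mS^{n-1})$.

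Second, handle the mechanical term by the localized cut-and-paste construction. The idea, adapted from \cite{CoDo15} but using the tools of \cite{BaHeMo17} developed in Section~\ref{sect:caprelaxenergy}, is: by the definition of $W^{qc}$, on small balls one can replace $u$ by a composition $u\circ\rho$ where $\rho:B\to\bar B$ is Lipschitz, orientation-preserving, equals $\id$ on $\partial B$, and is chosen (via a Vitali-type covering argument together with Lemmas~\ref{lem composition L1}, \ref{lem:L1lipisL1}, \ref{lem:chainrule} to control compositions in $L^1$ and to make the chain rule valid) so that $\fint_B W(D(u\circ\rho)(x),\vn(u(x)))\,\dd x$ is close to $\fint_B W^{qc}(Du(x),\vn(u(x)))\,\dd x$. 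Crucially $\rho$ maps $B$ into $\bar B$, so $(u\circ\rho)(B)=u(B)$ up to null sets, whence the geometric and topological images of $\Om$ are preserved, i.e.\ $\imT(u_j,\Om)=\imT(u,\Om)$. Lemma~\ref{lem inner comp by lip} guarantees that after pasting, the modified map is still in $\A_{p,q}(\Om)$ (resp.\ $\A_p^1(\Om)$ in the incompressible case, where one additionally arranges $\det D\rho=1$), and Lemma~\ref{lem paste Apq} handles the gluing across finitely many disjoint balls. Iterating over a fine cover and passing to the diagonal yields $u_j\to u$ in $L^1(\Om,\Rn)$, $u_j\in\A_{p,q}(\Om)$ with the boundary condition on $\Gamma$ intact, $\imT(u_j,\Om)=\imT(u,\Om)$, and $I_{\mec}(u_j,\vn)\to I^*_{\mec}(u,\vn)$; here the modulus-of-continuity hypothesis $|W(F,\vn)-W(F,\vm)|\le h(|\vn-\vm|)W(F,\vn)$ is what lets us keep the same $\vn\circ u_j$ in the integrand rather than tracking how $\vn\circ u_j$ moves.

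Third, handle the nematic term. Since $\imT(u_j,\Om)=\imT(u,\Om)$ for every $j$, the domain of integration of $I_{\nem}$ is fixed, so Theorem~\ref{th:tqc}(b) (in the form adapted to $\vn$-dependence, \cite{DaFoMaTr99,AlLe01}) applies verbatim on $\Om'=\imT(u,\Om)$: there is a sequence $\vn_k\weakc\vn$ in $W^{1,s}(\imT(u,\Om),\mS^{n-1})$ with $\int V(\vn_k,D\vn_k)\to\int V^{tqc}(\vn,D\vn)=I^*_{\nem}(u,\vn)$; extend each $\vn_k$ by $0$ outside $\imT(u,\Om)$ and note $\vn_k\to\vn$ in $L^1(\Rn,\Rn)$ by Rellich (using $s>1$). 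Then set $\vn_j$ to be a diagonal choice from this family, independent of $u_j$. Since $W^{qc}$ inherits $p$-growth and continuity (Proposition~\ref{pr:Wqccont}) and $W^{qc}\le W$, the two constructions do not interfere: $\limsup_j I(u_j,\vn_j)\le\limsup_j I_{\mec}(u_j,\vn)+\lim_j I_{\nem}(u_j,\vn_j)=I^*_{\mec}(u,\vn)+I^*_{\nem}(u,\vn)=I^*(u,\vn)$, where we also use that $I^*_{\mec}$ is itself the relaxation, so we may first replace $\vn$ in the mechanical recovery by $\vn_j$ via the continuity estimate (W). The incompressible case is identical, using (W$_1$), $\A_p^1$, and the volume-preserving choice of $\rho$.

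The main obstacle is the mechanical construction — specifically, arranging simultaneously that (i) the local modification realizes the quasiconvex envelope value, (ii) the resulting map stays in $\A_{p,q}$ (no spurious cavitation across the glued spheres, which is exactly what Lemmas~\ref{lem paste Apq} and \ref{lem inner comp by lip} are designed to prevent, but whose hypotheses — in particular the integrability $\int(\det D\rho)^{1-q'}<\infty$ and the $W^{1,p}$-membership of the pasted map — must be verified for the specific $\rho$'s produced by Dacorogna's construction), and (iii) the image $u_j(\Om)$ is exactly preserved so the nematic part decouples. Checking these compatibility conditions, and choosing the translation parameters $a_0$ (via Lemmas~\ref{lem composition L1}--\ref{lem:chainrule}) so that all the relevant $L^1$ products and chain rules are valid simultaneously on the whole cover, is the delicate bookkeeping that Lemma~\ref{lem conv uj} will have to carry out.
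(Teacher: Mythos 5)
Your proposal follows essentially the same route as the paper: the theorem is reduced to a single construction (the paper's Lemma \ref{lem conv uj}, built on Lemma \ref{lem approx u by z}) in which $u$ is locally replaced on small balls by $u\circ v$ with $v$ built from the test function in the definition of $W^{qc}$ and a translation $a_0$ chosen via Lemmas \ref{lem composition L1}--\ref{lem:chainrule}, membership in $\A_{p,q}$ (resp.\ $\A_p^1$) being preserved by Lemmas \ref{lem paste Apq} and \ref{lem inner comp by lip}, the topological image being unchanged so that Theorem \ref{th:tqc} applies on the fixed domain $\imT(u,\Om)$, and the two relaxations being combined by a diagonal argument using the modulus-of-continuity hypothesis in (W). All the key ingredients and the delicate points you flag are exactly those the paper addresses, so the approaches coincide.
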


The proof is divided into two lemmas.
In the first one, the function $u$ is modified in a ball.

\begin{lem}\label{lem approx u by z}
Assume one of the following:
\begin{enumerate}[a)]
\item\label{appr cond det>0}  $W$ satisfies (W),
\item\label{appr cond det=1} $W$ satisfies (W$_1$),
\end{enumerate}
and fix $F\in\R_{+}^{n\times n}$ in case \emph{\ref{appr cond det>0})}  and  $F\in SL(n)$ in case \emph{\ref{appr cond det=1})}, $\vm\in\mS^{n-1}$ and $\eta\in (0,1)$. Then there is $\delta>0$ such that for any ball $B=B(x_0,r)$, any $\vn\in L^{\infty} (\imT(u,B),\mS^{n-1})$ and any
 \[u\in\begin{cases}
 \A_{p,q}(B)&\text{in case \emph{\ref{appr cond det>0})},} \\
  \A_{p}^{1}(B)&\text{in case \emph{\ref{appr cond det=1})}} 
 \end{cases}\]
  with
\begin{equation}\label{eq: approxledelta}
\fint_B \left(|Du-F|^{p}+|\theta(\det Du)-\theta(\det F)|+|\vn\circ u-\vm|^{p}\right) \dd x\leq \delta\text{ in case \emph{\ref{appr cond det>0})},}
\end{equation}
or
\begin{equation*}
\fint_B \left(|Du-F|^{p}+|\vn\circ u-\vm|^{p}\right) \dd x\leq \delta\text{ in case \emph{\ref{appr cond det=1})},}
\end{equation*}
there exist $a_0\in B\left(x_0,\frac{r}{2}\right)$ and
 \[z\in\begin{cases}
 \A_{p,q}(B)& \text{in case \emph{\ref{appr cond det>0})},} \\
  \A_{p}^{1}(B)& \text{in case \emph{\ref{appr cond det=1})}} 
 \end{cases}\]
 with $z=u$ in $B(x_0,r)\setminus B\left(a_0,\frac{r}{2}\right)$, $\imT (z, \Om) = \imT (u, \Om)$,
\begin{equation}\label{eq: lemconv Wqc}
\int_{B\left(a_0,\frac{r}{2}\right)}W(Dz, \vn\circ z)\, \dd x\leq \int_{B\left(a_0,\frac{r}{2}\right)}(W^{qc}(Du, \vn\circ u)+\eta)\, \dd x
\end{equation}
and
\begin{equation}\label{eq: lemconvlpbound}
\int_B|u-z|^{p} \, \dd x \leq c \, r^{p}\int_B(W^{qc}(Du,\vn\circ u)+1)\, \dd x
\end{equation}
for some $c>0$ depending on $W$, $n$ and $p$.
If $u$ is Lipschitz, then so is $z$. 
\end{lem}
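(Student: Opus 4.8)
The statement is a local approximation lemma: at a point where $u$ is close (in an integral, $p$-power sense) to an affine map $F x$ and $\vn\circ u$ is close to a constant $\vm$, one replaces $u$ on a smaller ball by a function $z$ agreeing with $u$ near the boundary of that ball, matching the energy of the quasiconvexification up to $\eta$, and staying in $\A_{p,q}$ (or $\A_p^1$). The natural approach is the one from Conti--Dolzmann \cite{CoDo15}, combined with the composition/pasting machinery of Section \ref{sect:caprelaxenergy}. First I would fix, by the definition of $W^{qc}$, a test map $\varphi\in W^{1,\infty}(B(0,1),\R^n)$ (or $W^{1,\infty}(B(0,1),\R^n)$ with $\det D\varphi=1$ a.e.\ in case \ref{appr cond det=1})) with $\varphi(x)=Fx$ on $\partial B(0,1)$ and $\fint_{B(0,1)}W(D\varphi,\vm)\,\dd x\le W^{qc}(F,\vm)+\tfrac{\eta}{4}$; here I use Proposition \ref{pr:Wqccont} to guarantee $W^{qc}$ is continuous and finite near $(F,\vm)$, and the growth bound in (W) to control $\theta(\det D\varphi)$ and $\cof D\varphi$. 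Rescaling, set $\psi(x):=r^{-1}\varphi(r(x-a_0)/r)$-type maps on $B(a_0,r/2)$; more precisely one wants $\rho:B\to\bar B$ Lipschitz with $\rho|_{\partial B}=\id$, supported on a half-radius ball around the yet-to-be-chosen centre $a_0$, built from $F^{-1}D\varphi$ so that $u\circ\rho$ has gradient close to $(Du\circ\rho)F^{-1}D\varphi$, and on the affinised picture this is $F\cdot F^{-1}D\varphi=D\varphi$.

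\textbf{Key steps.} (i) Choose $\delta$: by continuity of $W$, $W^{qc}$, $\theta$ and the modulus $h$, and by an absolute-continuity/Vitali argument, pick $\delta$ so small that the hypothesis \eqref{eq: approxledelta} forces $Du$ to be within a controlled distance of $F$ and $\vn\circ u$ within a controlled distance of $\vm$ on a large-measure subset of $B$, with the remaining small-measure set contributing little to all the relevant integrals (this uses the superadditivity bound $\theta(t_1t_2)\lesssim(1+\theta(t_1))(1+\theta(t_2))$ to handle $\theta(\det(Du\,\rho\text{-factor}))$). (ii) Select the centre $a_0\in B(x_0,r/2)$: apply Lemma \ref{lem composition L1} and Lemma \ref{lem:L1lipisL1} to the $L^1$ functions $\det Du$, $\cof Du$ (and $W^{qc}(Du,\vn\circ u)$, $|Du|^p$, $|u|$) composed with the Lipschitz map $\rho$, so that for a.e.\ $a_0$ all these compositions are genuinely $L^1$ with the quantitative bound of Lemma \ref{lem composition L1}; simultaneously Lemma \ref{lem:chainrule} gives $w=u\circ\rho\in W^{1,1}$ with the chain rule $Dw=(Du\circ\rho)D\rho$ and $w=u$ on $\partial B(a_0,r/2)$. (iii) Set $z:=u\circ\rho$ on $B(a_0,r/2)$ and $z:=u$ elsewhere; verify $z\in W^{1,p}$, $\det Dz\in L^1$, $\cof Dz\in L^q$ via the bounds from step (ii) plus the coercivity in (W), then invoke Lemma \ref{lem inner comp by lip} to conclude $z\in\A_{p,q}(B)$ (resp.\ Lemma \ref{lem inner comp by lip} with $\det D\rho=1$ for $\A_p^1$); since $\rho(B)=\bar B$ and $\rho=\id$ near $\partial B$, the geometric/topological image is unchanged, giving $\imT(z,\Om)=\imT(u,\Om)$. (iv) Estimate the energies: by the change of variables $y=\rho(x)$ and the chain rule, $\int_{B(a_0,r/2)}W(Dz,\vn\circ z)\,\dd x=\int W((Du\circ\rho)D\rho,\vn\circ u\circ\rho)$; on the good set this is $\approx\int W(F\,F^{-1}D\varphi,\vm)=\int W(D\varphi,\vm)\le|B(a_0,r/2)|(W^{qc}(F,\vm)+\tfrac\eta4)$, and using $|W(F,\cdot)-W(F,\cdot)|\le h W$ plus $\fint_B W^{qc}(Du,\vn\circ u)\ge W^{qc}(F,\vm)-\tfrac\eta4$ (again by continuity, shrinking $\delta$), one absorbs all errors into $\eta$ to get \eqref{eq: lemconv Wqc}; the bad-set contribution is swallowed because its measure is $\lesssim\delta$ and the integrands are equi-integrable by the bound in Lemma \ref{lem composition L1}. (v) The $L^p$ bound \eqref{eq: lemconvlpbound}: $|u-z|\le|u-u\circ\rho|$ on $B(a_0,r/2)$ is bounded by $\mathrm{diam}\,u(B(a_0,r/2))$-type quantities, which via the rescaled Poincaré/Morrey estimate and the growth $W^{qc}(F,\vm)\gtrsim|F|^p$ is $\lesssim r^p\int_B(|Du|^p+1)\lesssim r^p\int_B(W^{qc}(Du,\vn\circ u)+1)$. (vi) If $u$ is Lipschitz, then $u\circ\rho$ is a composition of Lipschitz maps, hence $z$ is Lipschitz.

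\textbf{Main obstacle.} The delicate point is the simultaneous control, in step (iv), of the three competing errors: the discrepancy between $Du$ and $F$ (and between $\vn\circ u$ and $\vm$) on the \emph{good} set, where one must pass from $W((Du\circ\rho)D\rho,\vn\circ u\circ\rho)$ to $W(D\varphi,\vm)$ using continuity of $W$ \emph{together with} the multiplicative continuity estimate $|W(F,\vn)-W(F,\vm)|\le h(|\vn-\vm|)W(F,\vn)$ so that the director error is controlled by the energy itself; and the contribution from the \emph{bad} set $\{|Du-F|+\dots>$ threshold$\}$, whose measure is small but on which $W$ and $\theta$ can be large — this is exactly why one needs the quantitative $L^1$-compatibility of $\det Du\circ\rho$, $\cof Du\circ\rho$ and $W^{qc}(Du,\vn\circ u)\circ\rho$ from Lemma \ref{lem composition L1}, which bounds these compositions by a fixed multiple of the original $L^1$ norms and thereby provides the equi-integrability that makes the bad-set integral $o(1)$ as $\delta\to0$. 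Choosing $\delta$ uniformly over all balls $B$ and all admissible $(u,\vn)$ — i.e.\ depending only on $F$, $\vm$, $\eta$, $n$, $p$, $q$ and the constants in (W) — while keeping the centre-selection step available, is the structural heart of the argument; the determinant bookkeeping (using the convexity of $\theta$ and the submultiplicativity-type bound) is the technically heaviest piece but is essentially as in \cite{CoDo15}.
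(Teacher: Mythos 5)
Your plan is essentially the paper's own proof: take a near-optimal test map $\varphi_\eta$ from the definition of $W^{qc}$, replace $u$ on a half-ball $B(a_0,\tfrac r2)$ by the inner composition $z=u\circ v$ with $v=F^{-1}\varphi_\eta(\cdot-a_0)+a_0$, where the centre $a_0$ is selected through Lemmas \ref{lem composition L1}, \ref{lem:L1lipisL1} and \ref{lem:chainrule}, conclude $z\in\A_{p,q}$ (resp.\ $\A_p^1$) via Lemma \ref{lem inner comp by lip}, and estimate the energy by a good/bad-set splitting using Chebyshev, the continuity of $W$ and of $W^{qc}$ (Proposition \ref{pr:Wqccont}), the $h$-modulus in (W), the bound $\theta(t_1t_2)\lesssim(1+\theta(t_1))(1+\theta(t_2))$, and Poincar\'e for \eqref{eq: lemconvlpbound}. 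The only point your sketch leaves implicit, and which the paper handles explicitly by introducing the threshold $\gamma$ and the set $\omega_d=\{\det D\hat{\vp}_\eta\ge\gamma\}$ together with \eqref{eq: approxqcdet<gam1}--\eqref{eq: approxqcdet<gam2}, is that $W$ is uniformly continuous only on compact subsets of $\R^{n\times n}_+$, so the small region where the fixed test map has small determinant must be split off by absolute continuity of $\int(|D\varphi_\eta|^p+\theta(\det D\varphi_\eta))$; this is a detail-level refinement of your step (iv), not a different approach.
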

\begin{proof}
This proof is partially based on that of \cite[Lemma 3.2]{CoDo15}.  We will only prove the case \emph{\ref{appr cond det>0})}, since the proof of case \emph{\ref{appr cond det=1})} is analogous.

The $L^{p}$ bound \eqref{eq: lemconvlpbound} follows from Poincar\'{e}'s inequality, the growth condition of (W) and \eqref{eq: lemconv Wqc} as follows:
\begin{align*}
 & \int_B|u-z|^p \, \dd x = \int_{B (a_0, \frac{r}{2})} |u-z|^p \, \dd x \lesssim r^p \int_{B (a_0, \frac{r}{2})} |Du - Dz|^p \, \dd x \\
 & \lesssim r^p \int_{B (a_0, \frac{r}{2})} \left( |Du|^p + |Dz|^p \right) \dd x \lesssim r^p \int_{B (a_0, \frac{r}{2})} \left( W^{qc} (Du, \vn \circ u) + W (Dz, \vn \circ z) + 1 \right) \dd x \\
 & \lesssim r^p \int_{B (a_0, \frac{r}{2})} \left( W^{qc} (Du, \vn \circ u) + 1 \right) \dd x \leq r^p \int_{B} \left( W^{qc} (Du, \vn \circ u) + 1 \right) \dd x ,
\end{align*}
so the bulk of the proof consists in showing \eqref{eq: lemconv Wqc}.

By Definition \ref{de:quasiconvexification} of quasiconvexification, there exists $\vp_{\eta}\in W^{1,\infty} (B (0,\frac{r}{2} ), \R^n )$ such that $\vp_{\eta}(x)=Fx$ on $\partial B (0,\frac{r}{2})$, $\det D\vp_{\eta}>0$ a.e.\ and
\begin{equation}\label{eq: approxqcvp}
\fint_{B(0, \frac{r}{2})} W(D\vp_{\eta},\vm)\, \dd x\leq W^{qc}(F,\vm)+\eta.
\end{equation}

The function $F^{-1}\vp_\eta$ is Lipschitz and is the identity on $\partial B (0,\frac{r}{2})$, hence, by degree theory (see, if necessary, \cite[Th.\ 1]{Ball81}), $F^{-1}\vp_\eta (B (0,\frac{r}{2} ) )\subset \bar{B} (0,\frac{r}{2} )$.
 Moreover, $F^{-1}\vp_\eta$ is invertible and its inverse is in $W^{1,1}$ (see \cite[Th.\ 8]{Sverak88} or \cite[Th.\ 3.3]{HeMo15}).
  Take $a_0\in B (x_0,\frac{r}{2} )$ (to be chosen below), call $B'=B (a_0,\frac{r}{2})$ and set $v(x)=F^{-1}\vp_\eta(x-a_0)+a_0$ and 
\[z=\begin{cases} u \circ v &\text{in } B' , \\
u&\text{in } B(x_0,r)\setminus B' .
\end{cases}\]
It is clear that $z=u$ in $B(x_0,r)\setminus B'$, $\imT(v,B')=B'$ and $v^{-1}\in W^{1,1}(B',\R^{n})$. 

By Lemmas \ref{lem:chainrule} and \ref{lem:L1lipisL1}, there exists a null set $N$ such that for all $a_0\in B(x_0, \frac{r}{2}) \setminus N$ we have that $z\in W^{1,1}(B',\R^{n})$, $\det Dz\in L^{1}(B)$ and $\cof Dz\in L^{q} (B, \Rnn)$. Moreover, since $v|_{\partial B'}=\id|_{\partial B'}$  we have $u\circ v|_{\partial B'}=u|_{\partial B'}$ and, hence, $z\in W^{1,1}(B,\R^{n})$. Choose $E$ and $a_0\in E\setminus N$ using Lemma \ref{lem composition L1} applied to $B'$ with $\psi=F^{-1}\vp_\eta$, $f=|Du-F|^{p}+|\theta(\det Du)-\theta(\det F)|$ and $g=1+\theta(\det(F^{-1}D\vp_\eta))$. Then, by \eqref{eq: approxledelta},
\begin{equation}\label{eq: approxqcledelta}
 \fint_{B'}(1+\theta(\det Dv))\left(|Du-F|^{p}+|\theta(\det Du)-\theta(\det F)|\right)\circ v\, \dd x\le c_\eta \, \delta,
\end{equation}
with $c_\eta$ depending on $\eta$ and $F$.

By (W) and \eqref{eq: approxqcvp} we have $\theta(\det D\vp_\eta)\in L^{1} (B (0,\frac{r}{2} ) )$, $\theta(\det (F^{-1} D\vp_\eta))\in L^{1} (B(0,\frac{r}{2}))$ and $(\det D\vp_{\eta})^{1-q'} \in L^1 (B(0,\frac{r}{2}))$. Therefore, there exists $\gamma>0$ (depending on $F$, $\vm$ and $\eta$) such that  
\begin{equation}\label{eq: approxqcdet<gam1}
\int_{B\left(0,\frac{r}{2}\right)\cap \{\det D\vp_\eta<\gamma\}}(1+\theta(\det(F^{-1}D\vp_\eta)))\, \dd x\le \frac{\left|B\left(0,\frac{r}{2}\right)\right|\eta}{\left(3+\|F^{-1}D\vp_\eta\|_{L^{\infty}}^{p}\right)\left(1+|F|^{p}+\theta(\det F)\right)}
\end{equation}
and
\begin{equation}\label{eq: approxqcdet<gam2}
\int_{B\left(0,\frac{r}{2}\right)\cap \{\det D\vp_\eta<\gamma\}}(1+ |D\vp_\eta|^{p}+\theta(\det D\vp_\eta))\, \dd x\le \frac{1}{c}\left|B\left(0,\frac{r}{2}\right)\right|\eta,
\end{equation}
where $c$ is the constant of (W).

Let  $R_\eta=\|Dv\|_{L^{\infty}}$ and $M_\eta=\|D\vp_\eta\|_{L^{\infty}}$.
 Since $W$ is continuous in $\R^{n\times n}_{+}$ there is $\ve>0$ not depending on $u$, $\vn$ or $\delta$ with $\ve R_\eta \le 1$ and $\ve \le 1$ such that
\begin{equation}\label{eq: approxconvqcWcont}
|W(\sigma,\vec{\ell})-W(\zeta,\vec{k})|\le \eta
\end{equation}
for all $\sigma,\zeta \in \R^{n\times n}_{+}$ and $\vl, \vec{k} \in \mS^{n-1}$ with $|\zeta|\le M_\eta$, $\det \zeta\ge\gamma$ and $|\sigma-\zeta|+|\vec{\ell}-\vec{k}|\le \ve R_{\eta}$.
Moreover, by Proposition \ref{pr:Wqccont} and the continuity of $\theta$, the number $\ve$ can be chosen so that
\begin{equation}\label{eq: approxconvqcWqccont}
|W^{qc}(\zeta,\vec{\ell})-W^{qc}(F,\vm)|+|\theta(\det \zeta)-\theta(\det F)|\le \eta
\end{equation}
for all $\zeta \in \Rnn_+$ and $\vec{\ell} \in \mS^{n-1}$ satisfying $|\zeta-F|+|\vm-\vec{\ell}|\le \ve$.

%\begin{equation}\label{eq: approxconvqcWqccont}
%|W^{qc}(\zeta,\vec{\ell})-W^{qc}(F,\vec{\ell})|+|\theta(\det \zeta)-\theta(\det F)|\le \eta\text{ for all }\zeta \text{ satisfying }|\zeta-F|\le\ve\text{ and }\vec{\ell}\in\mS^{n-1}.
%\end{equation}

 Set $\hat{\vp}_\eta(x)=\vp_\eta(x-a_0)$, and write
\[\int_{B'}\left(W(Dz,\vn\circ z)-W^{qc}(Du,\vn\circ u)\right) \dd x=I_1+I_2+I_3+I_4,\]
with
\begin{align*}
 & I_1=\int_{B'}\left(W(Dz,\vn\circ z)-W(D\hvpe,\vn\circ z)\right) \dd x, \qquad
I_2=\int_{B'}\left(W(D\hvpe,\vn\circ z)-W(D\hvpe,\vm)\right) \dd x, \\
 & I_3=\int_{B'}\left(W(D\hvpe,\vm)-W^{qc}(F,\vm)\right) \dd x \quad \text{and} \quad 
I_4=\int_{B'}\left(W^{qc}(F,\vm)-W^{qc}(Du,\vn\circ u)\right) \dd x.
\end{align*}
We will estimate these four integrals separately.
Thanks to \eqref{eq: approxqcvp} we have $I_3\le \eta|B'|$. To estimate $I_4$ we use \eqref{eq: approxconvqcWqccont} to get 
\[W^{qc}(F,\vm)\le W^{qc}(Du,\vn\circ u)+\eta\quad\text{on the set where }|Du-F|+|\vm-\vn\circ u|\le \ve.\]
In $ \{x\in B': |Du(x)-F|+|\vm-\vn\circ u(x)|> \ve\}$ we use \eqref{eq: approxledelta} and Chebyshev's inequality to get
\begin{align*}
I_4&\le \eta \, | B'| + W^{qc}(F,\vm) \left| \{x\in B': |Du(x)-F|+|\vm-\vn\circ u(x)|> \ve\} \right|\\
&\le \eta \, |B'| + W^{qc}(F,\vm)\frac{2^{p-1}}{\ve^{p}} \, |B| \, \delta.
\end{align*}
To estimate $I_2$ we need to define the sets 
\[\omega=\{x\in B': |\vn\circ u(x)-\vm| \ge \ve R_\eta \} \quad \text{and} \quad \omega_d=\{x\in B': \det D\hvpe (x)\ge \gamma\},\]
 where $\ve$ and $\gamma$ are those of \eqref{eq: approxconvqcWcont}.
  Doing the change of variables $z(x)=u(x')$, i.e., $x=v^{-1}(x')$, we obtain
\begin{align*}
I_2=\int_{B'}\left(W((D\hvpe)\circ v^{-1}(x'),\vn\circ u(x'))-W((D\hvpe)\circ v^{-1}(x'),\vm)\right)\det D v^{-1}(x')\, \dd x',
\end{align*}
and
\[\int_{B'}\det D v^{-1}(x')\, \dd x'= |B'|.\]
Using (W)  and \eqref{eq: approxconvqcWcont} we get
\begin{align*}
I_2&\le \int_{v(\omega_d)\setminus\omega}\eta \det D v^{-1}(x')\, \dd x'+\int_{B'\setminus  (v(\omega_d)\setminus \omega)}W((D\hvpe)\circ v^{-1}(x'),\vn\circ u(x'))\det D v^{-1}(x')\, \dd x'\\
&\le \eta \, |B'|+c\int_{B'\setminus (v(\omega_d)\setminus\omega)}\left(1+ |(D\hvpe)\circ v^{-1}(x')|^{p}+\theta(\det D\hvpe)\circ v^{-1}(x')\right)\det D v^{-1}(x')\, \dd x'.
\end{align*}
Doing the change of variables $x=v^{-1}(x')$ and using \eqref{eq: approxqcdet<gam2} we obtain
\begin{align*}
&c\int_{B'\setminus v(\omega_d)}\left(1+ |(D\hvpe)\circ v^{-1}(x')|^{p}+\theta(\det D\hvpe)\circ v^{-1}(x')\right)\det D v^{-1}(x')\, \dd x'\\
&\le c\int_{B'\setminus \omega_d}\left(1+ |D\hvpe(x)|^{p}+\theta(\det D\hvpe(x))\right) \dd x\le \eta \, |B'|.
\end{align*}
 On the other hand, for $x\in \omega_d$ we have that $\det Dv(x)\ge \gamma\det F^{-1}$, so $\det D v^{-1}\in L^{\infty}(v(\omega_d))$. Then, using \eqref{eq: approxledelta}, $\theta(\det D\hvpe)\in L^{\infty}(\omega_d)$ and Chebyshev's inequality we get
\begin{align*}
&c\int_{\omega\cap v(\omega_d)}\left(1+ |(D\hvpe)\circ v^{-1}(x')|^{p}+\theta(\det (D\hvpe))\circ v^{-1}(x')\right)\det D v^{-1}(x')\, \dd x'\\
&\lesssim |\omega|\lesssim \ve^{-p} \, \delta \, |B'|,
\end{align*} 
with the constant under $\lesssim$ depends on $W$, $\gamma$ and $\eta$ but not on $\delta$, $\vn$, $u$ or $z$.

 Hence, we have that there exists a constant $\tilde{c}$ depending on $\eta$ and $W$ but not on $\delta$ such that
\begin{equation*}
I_2\le (2\eta+\tilde{c}\ve^{-p}\delta) \, |B'|.
\end{equation*}
Next, we estimate $I_1$. Let
\[\omega'=\{x\in B': |Du(x)-F|\circ v\ge \ve \}.\]
 Using that, in $B'$,
\[Dz=(Du\circ v)Dv=\left[(Du-F)\circ v\right]Dv+D\hvpe\] 
and that in $\omega_d\setminus \omega'$ we have $\det D\hvpe\ge \gamma$ and $ |Du(x)-F|\circ v \le \ve  $ we get
\[|Dz-D\hvpe|\le \left[|Du-F|\circ v\right]|Dv|\le \ve R_{\eta} .\]
By \eqref{eq: approxconvqcWcont} we have
\[\int_{\omega_d\setminus \omega'}\left(W(Dz,\vn\circ z)-W(D\hvpe,\vn\circ z)\right) \dd x\le \eta|B'|.\]
Using the growth estimate (W) we obtain
\[W(Dz,\vn\circ z)\le c\left(1+ \left[|Du|^{p}\circ v\right]|Dv|^{p}+\theta((\det Du)\circ v\det Dv)\right).\]
Hence using $|Dv|\le R_\eta$ and (W) we get that, in $B'$,
\begin{equation}\label{eq: approxboundWdz}
W(Dz,\vn\circ z)\le c\left(1+R_\eta^{p} |Du|^{p}\circ v+1+\theta((\det Du)\circ v)\right)(1+\theta (\det Dv)).
\end{equation}
To estimate the integral in $\omega'$ we observe that $|Du-F|\circ v\ge \ve $ implies
\[|Du|\circ v+1\le |Du-F|\circ v+|F|+1\le\left(\frac{|F|+1}{\ve}+1\right)|Du-F|\circ v\]
and
\[\theta (\det Du)\circ v\le |\theta(\det Du)\circ v-\theta(\det F)|+\frac{\theta (\det F)}{\ve^{p}} \, |Du-F|^{p}\circ v.\]
Therefore, from \eqref{eq: approxboundWdz} and \eqref{eq: approxqcledelta} we obtain
\begin{align*}
\int_{\omega'}W(Dz,\vn\circ z)&\le c\int_{\omega'}(1+\theta (\det Dv(x)))\left(2+R_\eta^{p} |Du|^{p}+\theta(\det Du)\right)\circ v(x)\, \dd x\\
&\le c'\int_{\omega'}(1+\theta (\det Dv(x)))\left(|Du-F|^{p}+|\theta(\det Du)-\theta(\det F)|\right)\circ v(x)\, \dd x\\
&\le c_\eta' \, \delta \, |B'|. 
\end{align*}
The constant $c_\eta'$ depends on $W$, $\eta$ and $F$ but not on $\delta$.
 In $B'\setminus (\omega_d\cup\omega')$ we have $|Du-F|\circ v\le \ve  \le 1$ and $\det D\hvpe < \gamma$.
  Then we have $|Du|\circ v\le |F|+1$ and thanks to \eqref{eq: approxconvqcWqccont} we also obtain $\theta(\det Du)\circ v\le \theta(\det F)+1$.
   Therefore \eqref{eq: approxboundWdz} implies
\begin{align*}
W(Dz,\vn\circ z)&\le c\left(3+R_\eta^{p} (1+|F|)^{p}+\theta(\det F)\right)(1+\theta (\det Dv))\\
& \le c_*\left(3+\|F^{-1}D\vp_\eta\|_{L^{\infty}}^{p}\right)\left(1+|F|^{p}+\theta(\det F)\right)(1+\theta (\det Dv)),
\end{align*}
with $c_*$ depending only on $W$. Hence, thanks to \eqref{eq: approxqcdet<gam1} we get
\[\int_{B'\setminus (\omega'\cup\omega_d)}W(Dz,\vn\circ z)\, \dd x\le c_* \, \eta \, |B'|.\]
Consequently,
\[I_1\le (\eta+c_\eta'\delta+c_*\eta) \, |B'|.\]
Adding the estimates for  $I_1$, $I_2$, $I_3$ and $I_4$ we obtain
\begin{align*}
\int_{B'}&\left(W(Dz,\vn\circ z)-W^{qc}(Du,\vn\circ u)\right) \dd x\\
&\le\left(\eta+c_\eta'\delta+c_*\eta+2\eta+\frac{\tilde{c}}{\ve^{p}}\delta+\eta+\eta+W^{qc}(F,\vm)\frac{2^{p-1}}{\ve^{p}}\delta\right)|B'|.
\end{align*}
Recall that $\eta$, $c_\eta'$, $c_*$, $\tilde{c}$ and $\ve$ do not depend on $\delta$.
 Then, choosing $\delta$ small enough, we have \eqref{eq: lemconv Wqc}.
  Using the growth condition (W) we obtain $Dz\in L^{p}(B)$, so $z\in W^{1,p}(B)$.

Recall  that $a_0$ was chosen so that $\det Dz\in L^{1}(B)$ and $\cof Dz\in L^{q}(B)$.  Then Lemma  \ref{lem inner comp by lip} gives $z\in \A_{p,q}(B)$ and the proof is completed.
\end{proof}
In the following lemma we apply Lemma \ref{lem approx u by z} in the Lebesgue points of $Du$ and $\vn\circ u$. The proof is based on that of \cite[Lemma 3.3]{CoDo15}.
\begin{lem}\label{lem conv uj}
Let $\Omega\subset\R^{n}$ open, Lipschitz and bounded, and assume \emph{\ref{appr cond det>0})} or \emph{\ref{appr cond det=1})} of Lemma \ref{lem approx u by z}. Then for any 
 \[u\in\begin{cases}
 \A_{p,q}(B)&\text{in case } \emph{\ref{appr cond det>0})},\\
  \A_{p}^{1}(B)&\text{in case } \emph{\ref{appr cond det=1})}
 \end{cases}\]
 and any $\vn \in W^{1,s}(\imT(u,\Omega),\mS^{n-1})$,
  there are two sequences
\[u_j\in\begin{cases}
 \A_{p,q}(\Omega)&\text{in case \emph{\ref{appr cond det>0})},} \\
  \A_{p}^{1}(\Omega)&\text{in case \emph{\ref{appr cond det=1})}}
 \end{cases} \qquad \text{and} \quad \vn_j \in W^{1,s}(\imT(u,\Omega),\mS^{n-1}) \]  
such that $u_j\rightharpoonup u$ in $W^{1,p} (\Om, \Rn)$, $u_j=u$ on $\partial \Omega$, $\imT(u_j,\Omega) = \imT(u,\Omega)$ for all $j \in \N$, $\vn_j \weakc \vn$ in $W^{1,s} (\imT(u,\Omega), \mS^{n-1})$,
\[\limsup_{j\to\infty}\int_{\imT(u_j,\Omega)}V(\vn_j(y), D\vn_j(y)) \, \dd y\le \int_{\imT(u,\Omega)} V^{tqc}(\vn(y),D\vn(y)) \, \dd y \]
and 
\[\limsup_{j\to\infty}\int_{\Omega}W(Du_{j},\vn_j\circ u_j)\, \dd x\leq \int_{\Omega}W^{qc}(Du,\vn \circ u)\, \dd x.\]
If, additionally, $u\in W^{1,\infty}(\Omega,\R^{n})$, then we can take $u_j\in W^{1,\infty}(\Omega,\R^{n})$.
\end{lem}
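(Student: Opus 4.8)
The plan is to relax the director $\vn$ \emph{first}, on the fixed open bounded set $\imT(u,\Omega)$, then relax $u$ with each intermediate director held fixed, and finally extract a diagonal sequence. We may assume that $\int_\Omega W^{qc}(Du,\vn\circ u)\,\dd x<\infty$ and $\int_{\imT(u,\Omega)}V^{tqc}(\vn,D\vn)\,\dd y<\infty$, since otherwise $(u_j,\vn_j)=(u,\vn)$ works. As $|F|^p$, $|\cof F|^q$ and $\theta(\det F)$ are polyconvex (the last because $\theta$ is convex), the coercivity lower bound in (W) passes to $W^{qc}$, so finiteness of $\int_\Omega W^{qc}(Du,\vn\circ u)\,\dd x$ forces $\cof Du\in L^q(\Omega,\Rnn)$ and $\theta(\det Du)\in L^1(\Omega)$; in particular $u\in\A_{p,q}(\Omega)$ and the $\vn$-independent majorant $c(|Du|^p+\theta(\det Du)+1)$ of $W(Du,\cdot)$ lies in $L^1(\Omega)$. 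By Theorem \ref{th:tqc}(b) applied on $\imT(u,\Omega)$ there is a sequence $\vn^k\weakc\vn$ in $W^{1,s}(\imT(u,\Omega),\mS^{n-1})$ with $\int_{\imT(u,\Omega)}V(\vn^k,D\vn^k)\,\dd y\to\int_{\imT(u,\Omega)}V^{tqc}(\vn,D\vn)\,\dd y$; using $|\vn^k|=1$ and the compact embedding on balls exhausting $\imT(u,\Omega)$, after passing to a subsequence we may also assume $\vn^k\to\vn$ a.e.\ in $\imT(u,\Omega)$.

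For each fixed $k$, $\eta>0$ and $\ell\in\N$ I would build an approximation of $u$ by an \emph{iterated} Vitali construction. For a.e.\ $x_0\in\Omega$ the point $x_0$ is a Lebesgue point of $Du$, of $\theta(\det Du)$ and of $\vn^k\circ u$; with $F=Du(x_0)$ and $\vm=\vn^k(u(x_0))$, Lemma \ref{lem approx u by z} yields a threshold $\delta(F,\vm,\eta)>0$, and for $r$ small (depending on $x_0$) the ball $B(x_0,r)\ssubset\Omega$ satisfies the smallness condition \eqref{eq: approxledelta}. Such balls with $r\le1/j$ form a fine cover, from which I extract, by the Vitali covering theorem, a \emph{finite} disjoint subfamily $\{B(x_i,r_i)\}_i$ covering $\Omega$ up to measure $\epsilon/2$; on each $B(x_i,r_i)$ I apply Lemma \ref{lem approx u by z} to get $a_i\in B(x_i,r_i/2)$ and $z_i$, and replace $u$ by $z_i$ on the (pairwise disjoint) inner balls $B(a_i,r_i/2)$. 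Since each $z_i$ equals $u$ near $\partial B(a_i,r_i/2)$ and the $B(x_i,r_i)$ are disjoint, Lemma \ref{lem paste Apq}, applied finitely many times, keeps the glued map in $\A_{p,q}(\Omega)$, and by the $\imT$-invariance in Lemma \ref{lem approx u by z}, iterated, the topological image stays equal to $\imT(u,\Omega)$. The glued map still equals $u$ off $\bigcup_iB(a_i,r_i/2)$, a set of measure at least $(1-2^{-n})(|\Omega|-\epsilon/2)$, so I repeat the construction on this ``unrelaxed'' set, tolerating in round $m$ a leftover of measure $\epsilon\,2^{-m}$; after $\ell$ rounds the set where the resulting $u_{k,\ell,j}$ still equals $u$ has measure at most $(1-2^{-n})^\ell|\Omega|+\epsilon$. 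Every modification is compactly supported in $\Omega$, so $u_{k,\ell,j}=u$ on $\partial\Omega$.

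Since we only ever modify pieces on which the current map is the \emph{original} $u$, and all modified balls (across all rounds) are pairwise disjoint, summing \eqref{eq: lemconv Wqc} gives
\[
 \int_\Omega W(Du_{k,\ell,j},\vn^k\circ u_{k,\ell,j})\,\dd x\le\int_\Omega W^{qc}(Du,\vn^k\circ u)\,\dd x+\eta\,|\Omega|+\sup_{m}\int_{\{|S|\le(1-2^{-n})^\ell|\Omega|+\epsilon\}}W(Du,\vn^m\circ u)\,\dd x ,
\]
the last term being small for $\ell$ large and $\epsilon$ small by the uniform $L^1$ majorant above. The $L^p$ bound \eqref{eq: lemconvlpbound}, summed over the disjoint balls of each round and over the $\ell$ rounds, gives $\|u-u_{k,\ell,j}\|_{L^p(\Omega)}^p\lesssim\ell\,j^{-p}\int_\Omega(W^{qc}(Du,\vn^k\circ u)+1)\,\dd x$, so $u_{k,\ell,j}\to u$ in $L^p$ as $j\to\infty$; with the $W$-bound and the coercivity in (W) this yields $u_{k,\ell,j}\weakc u$ in $W^{1,p}(\Omega,\Rn)$. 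It then remains to let $\vn^k\to\vn$ in the mechanical term: since $\det Du>0$ a.e.\ and $u$ satisfies the change of variables formula on $\Omega_0$, preimages of null sets under $u$ are null, hence $\vn^k\circ u\to\vn\circ u$ a.e.\ in $\Omega$; as $W^{qc}$ is continuous on $\Rnn_+\times\mS^{n-1}$ by Proposition \ref{pr:Wqccont} and $W^{qc}(Du,\vn^k\circ u)\le W(Du,\vn^k\circ u)\le c(|Du|^p+\theta(\det Du)+1)\in L^1(\Omega)$, dominated convergence gives $\int_\Omega W^{qc}(Du,\vn^k\circ u)\,\dd x\to\int_\Omega W^{qc}(Du,\vn\circ u)\,\dd x$.

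Finally I would diagonalize: choosing $\epsilon=\epsilon(j)\to0$, $\ell=\ell(j)\to\infty$, $k=k(j)\to\infty$ slowly, and then the Vitali index $m(j)$ large, set $u_j:=u_{k(j),\ell(j),m(j)}$ and $\vn_j:=\vn^{k(j)}$. Then $u_j\weakc u$ in $W^{1,p}$, $u_j=u$ on $\partial\Omega$, $\imT(u_j,\Omega)=\imT(u,\Omega)$, $\vn_j\weakc\vn$ in $W^{1,s}(\imT(u,\Omega),\mS^{n-1})$, $\int_{\imT(u,\Omega)}V(\vn_j,D\vn_j)\,\dd y\to\int_{\imT(u,\Omega)}V^{tqc}(\vn,D\vn)\,\dd y$, and $\limsup_j\int_\Omega W(Du_j,\vn_j\circ u_j)\,\dd x\le\int_\Omega W^{qc}(Du,\vn\circ u)\,\dd x$ --- the assertions of the lemma. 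The incompressible case \ref{appr cond det=1}) is identical, using the $SL(n)$-versions throughout; and if $u$ is Lipschitz then each $z_i$ is Lipschitz (as in Lemma \ref{lem approx u by z}), and since each round uses only finitely many disjoint balls with $z_i=u$ on the interface, the glued $u_j$ is again Lipschitz. The main difficulty is the bookkeeping of this iterated cut-and-paste: keeping the map in $\A_{p,q}(\Omega)$ with unchanged topological image at every stage --- which is precisely what lets the nematic term be relaxed afterwards on the \emph{fixed} domain $\imT(u,\Omega)$ and makes the two relaxations decouple --- while simultaneously coordinating the three approximation parameters so that the \emph{coupled} integrand $W(Du_j,\vn_j\circ u_j)$, not $W(Du_j,\vn\circ u_j)$, is the one being controlled; this is exactly what forces the order ``relax $\vn$ first, then $u$'' together with the modulus-of-continuity assumption in (W) that underlies Lemma \ref{lem approx u by z} and Proposition \ref{pr:Wqccont}.
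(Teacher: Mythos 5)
Your proof is correct, and its core machinery coincides with the paper's: Theorem \ref{th:tqc} for the director, the iterated Vitali construction built on Lemmas \ref{lem approx u by z} and \ref{lem paste Apq} for the deformation, and the invariance of $\imT(\cdot,\Omega)$ that decouples the two terms. Where you genuinely diverge is in the order of the two limits. You relax $\vn$ first and then, for each fixed $k$, run the entire Vitali construction against $\vn^k$ (choosing Lebesgue points of $\vn^k\circ u$ and thresholds $\delta(F,\vm,\eta)$ with $\vm=\vn^k(u(x_0))$), so you must afterwards pass $k\to\infty$ in the upper bound $\int_\Omega W^{qc}(Du,\vn^k\circ u)\,\dd x$; this forces you to invoke the continuity of $W^{qc}$ in the director variable (Proposition \ref{pr:Wqccont}) together with the $\vn$-independent majorant $c(|Du|^{p}+\theta(\det Du)+1)$ and Luzin's $N^{-1}$ property of $u$. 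The paper instead builds the recovery sequence $u_j$ once and for all against the fixed limit director $\vn$, and only afterwards replaces $\vn$ by $\vn_{k_j}$ inside the non-relaxed integrand, estimating $\int_\Omega W(Du_j,\vn_k\circ u_j)\,\dd x\le\int_\Omega\bigl(1+h(|\vn_k\circ u_j-\vn\circ u_j|)\bigr)W(Du_j,\vn\circ u_j)\,\dd x$ directly from the modulus-of-continuity hypothesis in (W) and letting $k\to\infty$ by dominated convergence (there Luzin's $N^{-1}$ is used for $u_j$ rather than for $u$). The paper's ordering is slightly more economical: the Lebesgue-point sets and the thresholds $\delta_x$ do not depend on $k$, so no diagonalization over the construction parameters is needed and Proposition \ref{pr:Wqccont} is not required at this stage. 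Your ordering works too, at the price of the heavier bookkeeping you acknowledge; the only blemishes are cosmetic rather than mathematical (the displayed remainder estimate with $\sup_m$ should simply be the integral of the $\vn$-independent majorant over the leftover set, and the indices in the final diagonalization should be tidied).
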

\begin{proof}
We will only prove the case \emph{\ref{appr cond det>0})}, the proof of case \emph{\ref{appr cond det=1})} being completely analogous.
Thanks to Theorem \ref{th:tqc}, there exists a sequence $\{\vn_k\}_{k \in \N}$ in $W^{1,s}(\imT(u,\Omega),\mS^{n-1})$ such that $\vn_k\weakly\vn$ in $W^{1,s}(\imT(u,\Omega),\mS^{n-1})$ and 
\[\lim_{k\to\infty} \int_{\imT(u,\Omega)}V(\vn_k(y), D\vn_k(y)) \, \dd y =  \int_{\imT(u,\Omega)} V^{tqc}(\vn(y),D\vn(y)) \, \dd y. \]
 Fix $\eta\in (0,1)$.
 For the sequence $\{u_j\}_{j \in \N}$ it is enough to construct 
\[w\in\begin{cases}
 \A_{p,q}(\Omega)&\text{in case \emph{\ref{appr cond det>0})},} \\
  \A_{p}^{1}(\Omega)&\text{in case \emph{\ref{appr cond det=1})}} 
 \end{cases}\]
such that $\|u-w\|_{L^{p}}\leq \eta$, $w=u$ on $\partial\Omega$, $\imT(w,\Omega)=\imT(u,\Omega)$ and 
\begin{equation}\label{eq:convujiwlewqc}
\int_{\Omega}W(Dw,\vn\circ w)\, \dd x\leq \int_{\Omega}W^{qc}(Du,\vn\circ u)\, \dd x+\eta.
\end{equation}
Indeed for each $j \in \N$, we can construct $u_j$ as the $w$ of the claim above corresponding to $\eta=1/j$.
Then $u_j\to u$ in $L^{p}$ and, thanks to \eqref{eq:convujiwlewqc} and (W), we will have $\sup_{j \in \N} \|u_j\|_{W^{1,p}} < \infty$, so $u_j\weakly u$ in $W^{1,p} (\Om, \Rn)$. 
On the other hand, since $\vn_k\to \vn$ a.e.\ in $\imT(u,\Omega)$ and $u_j$ satisfies Luzin's $N^{-1}$ condition (i.e., the preimage of a set of measure zero has measure zero: this is a consequence of the fact that $\det D u_j > 0$ a.e.), for every $j\in\N$ we have $\vn_k\circ u_j\to \vn\circ u_j$ a.e.\ in $\Omega$ as $k\to\infty$, hence using (W) we obtain
\[\int_{\Omega}W(Du_j,\vn_k\circ u_j)\, \dd x\le \int_{\Omega} \left( h(|\vn_k\circ u_j-\vn\circ u_j|)+1 \right) W(Du_j,\vn\circ u_j)\, \dd x.\]
By dominated convergence, we have 
\[
 \limsup_{k \to \infty} \int_{\Omega} W(Du_j,\vn_k\circ u_j)\, \dd x\le \int_{\Omega} W(Du_j,\vn\circ u_j)\, \dd x ,
\]
so, for each $j\in\N$ we can take $k_j \in \N$ big enough to have 
\[\int_{\Omega}W(Du_j,\vn_{k_{j}}\circ u_j)\, \dd x\leq \int_{\Omega}W^{qc}(Du,\vn\circ u)\, \dd x+2j^{-1}.\]
Therefore, relabelling the sequence $\{\vn_j\}_{j\in\N}$ we have
\[\limsup_{j\to\infty}\int_{\Omega}W(Du_{j},\vn_j\circ u_j)\, \dd x\leq \int_{\Omega}W^{qc}(Du,\vn \circ u)\, \dd x.\]

If $\int_{\Omega}W^{qc}(Du,\vn\circ u)\, \dd x=\infty$, we can take $w=u$, so we will assume $W^{qc}(Du,\vn\circ u)\in L^{1}(\Omega)$. Using (W) we have
\[\frac{1}{c}|F|^{p}+\frac{1}{c}\theta(\det F)-c\le W^{qc}(F,\vm)\text{ for all }F\in \R^{n\times n}_+ \text{ and }\vm\in \mS^{n-1} .\]
 This is because the left-hand side of the inequality above is polyconvex, hence quasiconvex.
 Hence, $|Du|^{p}$ and $\theta(\det Du)$ are integrable.
  On the other hand, we have $\vn\circ u\in L^{\infty}(\Omega, \mS^{n-1})$, because thanks to \cite[Lemma 7.7]{BaHeMo17}, $\vn \circ u$ is measurable.
   Denote by $E$ the intersection of the set of  $p$-Lebesgue points of $Du$ and $\vn\circ u$ and Lebesgue points of $\theta(\det Du)$.
    Given $x\in E$, let $F_x=Du(x)$ and $\vm_x=\vn\circ u(x)$,  and choose $\delta_x$ as in Lemma \ref{lem approx u by z} for this $F_x$, $\vm_x$ and $\eta$ as above.

We will construct a sequence of $\{(w_j,\Omega_j)\}_{j\in\N}$ such that $w_j\in \A_{p,q}(\Omega)$, $\{\Omega_j\}_{j\in\N}$ is a decreasing sequence of open subsets of $\Omega$, $w_j=u$ on $\Omega_j$ and $\imT(w_j,\Omega)=\imT(u,\Omega)$.
 Set $w_0=u$ and $\Omega_0=\Omega$.
 The passage from $(w_j,\Omega_j)$ to $(w_{j+1},\Omega_{j+1})$ is as follows.
   For all $x\in E\cap\Omega_j$ we choose $r_j(x)\in (0,\eta)$ such that $B(x, r_j(x))\subset\Omega_j$, $u^* \in W^{1,p} (\partial B(x, r_j(x)), \Rn)$ (recall from Subsection \ref{subse:definitions} the definition of precise representative) and 
\[\fint_{B(x,r)}\left(|Dw_j(x')-F_x|^{p}+|\theta(\det Dw_j(x'))-\theta(\det F_x)|+|\vn\circ w_j(x')-\vm_x|^{p}\right) \dd x'\leq \delta_x\]
for all $r<r_j(x)$. The union of this collection of balls $B(x,r_j(x))$ covers $\Omega_j$ up to a set of measure zero.  Extract a finite disjoint subset $\{B(x_k,r_k)\}_{k=0}^{M}$ such that 
\[\left|\bigcup_{k=0}^{M}B(x_k,r_k) \right|\ge \frac{1}{2}|\Omega_j|.\]
Define $w_{j+1}$ as $w_j$ on $\Omega\setminus \bigcup_{k=0}^{M}B(x_k,r_k)$ and as the function $z$ of Lemma \ref{lem approx u by z} in each of the balls $B(x_k,r_k)$.
 Then $w_{j+1}=w_j=u$ on $\partial\Omega $ and  thanks to Lemma \ref{lem paste Apq}, we get 
\[w_{j+1}\in\begin{cases}
 \A_{p,q}(\Omega)&\text{if }W \text{ satisfies \emph{\ref{appr cond det>0})},} \\
  \A_{p}^{1}(\Omega)&\text{if }W \text{ satisfies \emph{\ref{appr cond det=1})}.}
 \end{cases}\]
Let $B (x_k',\frac{r_k}{2} )\subset B(x_k,r_k)$ be the ball given by Lemma \ref{lem approx u by z}.
Take an increasing sequence $\{U_i\}_{i\in\N}$ of open subsets compactly contained in $\Om$ such that $\bigcup_{i\in\N} U_{i}=\Omega$, $\bigcup_{k=0}^{M}B(x_k,r_k) \subset U_1$ and $w^*_j \in W^{1,p} (\partial U_i, \Rn)$ for all $i \in \N$.
Then, $w_j$ and $w_{j+1}$ coincide in a neighbourhood of each $\partial U_i$, so $w^*_{j+1} \in W^{1,p} (\partial U_i, \Rn)$ and $\imT(w_j,U_i) = \imT(w_{j+1},U_i)$ (recall Definition \ref{de:im top}), since the degree only depends on the boundary values.
Therefore, $\imT(w_j,\Omega) = \imT(w_{j+1},\Omega)$, and, by induction, $\imT(w_{j+1},\Omega)=\imT(u,\Omega)$.
 
By Lemma \ref{lem approx u by z},
 \begin{equation}\label{eq:sucesuj WleWqc}
 \int_{B\left(x_k',\frac{r_k}{2}\right)}W(Dw_{j+1},\vn\circ w_{j+1})\, \dd x\le \int_{B\left(x_k',\frac{r_k}{2}\right)}\left(W^{qc}(Du,\vn\circ u)+\eta\right) \dd x
 \end{equation}
 and 
  \begin{equation}\label{eq:sucesuj LpleWqc}
 \int_{B\left(x_k,r_k\right)}|w_{j+1}-u|^{p}\, \dd x\le c \, \eta^{p}\int_{B(x_k,r_k)}\left(W^{qc}(Du,\vn\circ u)+1\right) \dd x.
 \end{equation}
Set $\Omega_{j+1}=\Omega_{j}\setminus \bigcup_{k=0}^{M}\bar{B}\left(x_k',\frac{r_k}{2}\right)$.
 It is clear that $w_{j+1}=w_j=u$ on $\Omega_{j+1}$ and that $|\Omega_{j+1}|\le (1-2^{-n-1})|\Omega_j|$.
The construction of $w_{j+1}$ is completed and, hence, so is the sequence $\{ w_j \}_{j \in \N}$.
Thus, we only have to show that for $j$ big enough, $w_j$ has the desired properties, namely, \eqref{eq:convujiwlewqc} and that $\|u-w_j\|_{L^{p}}$ is small.

  Thanks to \eqref{eq:sucesuj LpleWqc} we have 
\[\int_{\Omega}|w_{j}-u|^{p}\, \dd x\le c \, \eta^{p} \int_{\Omega}\left(W^{qc}(Du,\vn\circ u)+1\right) \dd x , \]
so $w_j$ is close to $u$ in $L^{p}$, independently of $j$. 
On the other hand, from \eqref{eq:sucesuj WleWqc} we obtain
\[ \int_{\Omega\setminus\Omega_j}W(Dw_{j+1},\vn\circ w_{j+1})\, \dd x\le \int_{\Omega\setminus\Omega_j}\left(W^{qc}(Du,\vn\circ u)+\eta\right) \dd x,\]
which implies
 \[ \int_{\Omega}W(Dw_{j+1},\vn\circ w_{j+1})\, \dd x\le \int_{\Omega\setminus \Omega_j}\left(W^{qc}(Du,\vn\circ u)+\eta\right) \dd x+\int_{\Omega_j}W(Du,\vn\circ u)\, \dd x.\]
 Using $|\Omega_j|\le (1-2^{-n-1})^{j}|\Omega|\to 0$ and that, thanks to (W), we have $W(Du,\vn\circ u)\in L^{1}(\Omega)$ (since $|Du|^{p}$ and $\theta(\det Du)$ are integrable), for $j$ large enough we get 
 \begin{equation*}
 \int_{\Omega}W(Dw_{j+1},\vn\circ w_{j+1})\, \dd x\le \int_{\Omega}\left(W^{qc}(Du,\vn\circ u)+2\eta\right) \dd x
 \end{equation*}
 and the proof is concluded.
\end{proof}

\section{Relaxation}\label{se:generalizations}

Once the recovery sequence has been constructed in Theorem \ref{th conv nem det>0} and the lower semicontinuity and compactness results have been established in Section \ref{se:existence}, the general theory of relaxation (see, e.g., \cite[Th.\ 11.1.1 and 11.1.2]{AtBuMi06}) provides the following result.
We recall that the lower semicontinuous envelope is the largest lower semicontinuous function below a given one.

\begin{theorem}\label{th relax nem det>0}
Let $W$ satisfy (W) and let $V$ satisfy (V). Assume $W^{qc}$ is polyconvex.
Then $I^*$ is the lower semicontinuous envelope of $I$ with respect to the $L^1 (\Om, \R) \times L^1 (\Rn, \Rn)$ topology and, for each $(u,\vn) \in L^1 (\Om, \R) \times L^1 (\Rn, \Rn)$,
\[
 \displaystyle I^{*} (u,\vn)= \inf \left\{ \liminf_{j\to\infty} I(u_j,\vn_j) : (u_j, \vn_j) \to (u, \vn) \text{ as } j \to \infty \text{ in } L^1 (\Om, \R) \times L^1 (\Rn, \Rn) \right\} .
\]
If, in addition, $I$ is not identically infinity then 
\begin{enumerate}[a)]
\item There exists a minimizer of $I^*$.

\item Every minimizer of $I^*$ is the limit in $L^1 (\Om, \R) \times L^1 (\Rn, \Rn)$ of a minimizing sequence for $I$.

\item Every minimizing sequence of $I$ converges in $L^1 (\Om, \R) \times L^1 (\Rn, \Rn)$, up to a subsequence, to a minimizer of $I^*$.
\end{enumerate}
\end{theorem}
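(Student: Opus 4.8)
The strategy is to invoke the abstract relaxation machinery, so the bulk of the work is checking that its hypotheses are met by the two ingredients we have already established: the lower semicontinuity of $I^*$ and the existence of a recovery sequence for $I^*$ from $I$. First I would observe that, since $W^{qc}$ is polyconvex by assumption and inherits the continuity from Proposition~\ref{pr:Wqccont} together with the growth bound $W^{qc}(F,\vn)\le W(F,\vn)\le c(|F|^p+\theta(\det F)+1)$ and the lower bound $\frac1c(|F|^p+\theta(\det F))-c\le W^{qc}(F,\vn)$ (the latter because $\frac1c(|F|^p+\theta(\det F))-c$ is polyconvex, hence quasiconvex, hence below its own quasiconvexification which is $\le W^{qc}$), the pair $(W^{qc},V^{tqc})$ satisfies exactly the hypotheses of Proposition~\ref{prop:lowersemicont} (with $W^{qc}$ in place of $W$ and $V^{tqc}$ in place of $V$, noting $V^{tqc}$ is continuous by the remark following Theorem~\ref{th:tqc} and satisfies \eqref{eq:Vsgrowth} since $V^{tqc}\le V$ and $V^{tqc}\ge$ the coercive lower bound coming from (V) again via quasiconvexity). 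Hence $I^*$ is $L^1\times L^1$-lower semicontinuous on $\B$. The compactness Proposition~\ref{prop:compactness} applies verbatim to $I$ under (W) and (V).

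Next I would combine these with Theorem~\ref{th conv nem det>0}, which produces, for every $(u,\vn)$, a sequence $(u_j,\vn_j)\to(u,\vn)$ in $L^1\times L^1$ with $\limsup_j I(u_j,\vn_j)\le I^*(u,\vn)$. Together with lower semicontinuity of $I^*$ and the trivial bound $I^*\le I$ (each of $W^{qc}\le W$ and $V^{tqc}\le V$), this shows that $I^*$ is precisely the lower semicontinuous envelope of $I$: the recovery sequence gives $\overline{I}\le I^*$ pointwise (where $\overline I$ denotes the envelope), while $I^*$ lower semicontinuous and $\le I$ gives $I^*\le\overline I$. The relaxation formula
\[
 I^*(u,\vn)=\inf\Big\{\liminf_{j\to\infty} I(u_j,\vn_j):(u_j,\vn_j)\to(u,\vn)\text{ in }L^1(\Om,\R)\times L^1(\Rn,\Rn)\Big\}
\]
is then the standard characterization of the envelope: ``$\le$'' follows from the recovery sequence, ``$\ge$'' from lower semicontinuity of $I^*$ applied along any competitor sequence. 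For this last step I would cite \cite[Th.\ 11.1.1 and 11.1.2]{AtBuMi06}.

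For parts a)--c), once we know $I$ is not identically infinity, so is $I^*$ (by the envelope characterization, or because $I^*\le I$), and $\B\ne\varnothing$. Then Propositions~\ref{prop:lowersemicont} and~\ref{prop:compactness} give, via the direct method, existence of a minimizer of $I^*$, which is part a). Part b) follows because if $(u,\vn)$ minimizes $I^*$, a recovery sequence $(u_j,\vn_j)$ for it satisfies $\limsup_j I(u_j,\vn_j)\le I^*(u,\vn)=\inf I^*=\inf I$ (the last equality again from the envelope), so $(u_j,\vn_j)$ is a minimizing sequence for $I$ converging to $(u,\vn)$. Part c) follows from compactness: any minimizing sequence $(u_j,\vn_j)$ for $I$ has $\sup_j I(u_j,\vn_j)<\infty$, hence a subsequence converges in $L^1\times L^1$ to some $(u,\vn)\in\B$, and then lower semicontinuity of $I^*$ plus $I^*\le I$ gives $I^*(u,\vn)\le\liminf_j I^*(u_j,\vn_j)\le\liminf_j I(u_j,\vn_j)=\inf I=\inf I^*$, so $(u,\vn)$ is a minimizer of $I^*$. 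These three items are exactly the general relaxation statements of \cite[Th.\ 11.1.2]{AtBuMi06}, so the proof amounts to verifying the abstract hypotheses and quoting that theorem.

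**Main obstacle.** The real content is not in this section at all but in the two pillars it rests on: Theorem~\ref{th conv nem det>0} (the recovery sequence, with the delicate cut-and-paste inside the class $\Ap$ preserving $\imT(u,\Om)$, which is what decouples the two relaxation processes) and the lower semicontinuity Proposition~\ref{prop:lowersemicont}. Within the present section, the only point requiring a little care is checking that $W^{qc}$ and $V^{tqc}$ satisfy the coercivity/growth bounds needed to reapply the earlier semicontinuity and compactness results — in particular that passing to quasiconvexifications does not destroy the lower bound, which one gets from the polyconvexity (hence quasiconvexity) of the coercive minorants. Everything else is a direct appeal to \cite{AtBuMi06}.
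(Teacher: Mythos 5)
Your proposal is correct and follows essentially the same route as the paper: the paper's own proof is precisely the observation that, once the recovery sequence (Theorem \ref{th conv nem det>0}) and the lower semicontinuity and compactness results of Section \ref{se:existence} (applied to $W^{qc}$ and $V^{tqc}$, whose required continuity, polyconvexity/tangential quasiconvexity and growth bounds are inherited exactly as you argue) are in place, the conclusion follows from the abstract relaxation theory of \cite[Th.\ 11.1.1 and 11.1.2]{AtBuMi06}. Your write-up merely makes explicit the verification of hypotheses that the paper leaves implicit.
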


The analogue of Theorem \ref{th relax nem det>0} remains true in the incompressible case, i.e., when $W$ is assumed to satisfy (W$_1$) and every instance of $I$ is replaced by $I_1$, and every instance of $I^*$ by $I^*_1$.

As usual in relaxation and $\Gamma$-convergence problems (see, e.g., \cite[Rk.\ 2.2]{Braides06}), if $F$ is a functional continuous with respect to the topology $L^1 (\Om, \R) \times L^1 (\Rn, \Rn)$, then the relaxation of $I + F$ is $I^* + F$.
An example of such an $F$ is given by $F(u) = \int_{\Om} f(x, u(x)) \, \dd x$ with $f : \Om \times \Rn \to \R$ measurable in the first variable and continuous in the second such that $|f(x, y)| \leq C |y|^r + \gamma (x)$ for a.e.\ $x \in \Om$ and all $y \in \Rn$, for some $C>0$, $\gamma \in L^1 (\Om)$ and $0 \leq r < p^*$, where $p^*$ is the conjugate Sobolev exponent of $p$ (see, e.g., \cite[Cor.\ 6.51]{FoLe07}).
This is because, as shown in Proposition \ref{prop:lowersemicont} (in truth, \cite[Th.\ 8.2]{BaHeMo17}), if $u_j \to u$ in $L^1 (\Om, \Rn)$ and $\sup_{j \in \N} I_{\mec} (u_j, \vn_j) < \infty$ then, for a subsequence, $u_j \weakly u$ in $W^{1,p} (\Om, \Rn)$ and, by the compact Sobolev embedding, $u_j \to u$ in $L^r (\Om, \Rn)$.

\subsection*{Acknowledgements}

We thank G. Leoni for bringing to our notice the concept of tangential quasiconvexity. 
C.M.-C.\ has been supported by Project MTM2014-57769-C3-1-P and the ``Ram\'on y Cajal'' programme RYC-2010-06125 
of the Spanish Ministry of Economy and Competitivity.
Both authors have been supported by the ERC Starting grant no.\ 307179.

\small
\bibliographystyle{siam}
\bibliography{Bibliography}
\end{document}